\def\bR{\mathbb R}
\def\bR{\boldsymbol{R}}
\def\bTheta{\boldsymbol{\Theta}}
\newcommand{\E}{\operatorname{\mathbb{E}}}
\renewcommand{\P}{\operatorname{\mathbb{P}}}
\newcommand{\Var}{\operatorname{Var}}
\newcommand{\var}[1]{\operatorname{Var}\left(#1\right)}
\newtheorem{definition}{Definition}
\newtheorem{remark}{Remark}
\newtheorem{Theorem}{Theorem}
\newtheorem{theorem}{Theorem}
\newtheorem{lemma}[Theorem]{Lemma}
\newtheorem{Corollary}[Theorem]{Corollary}
\newtheorem{proposition}[Theorem]{Proposition}
\def\argmin{\mathop{\rm arg\,min}}
\def\diag{\mathop{\rm diag}\nolimits}
\def\Hyp{{\rm Hyp}}
\def\id{\mathop{\rm id}\nolimits}
\def\Var{\mathop{\rm Var}\nolimits}
\def\beq{\begin{equation}}
\def\eeq{\end{equation}}
\def\E{{\mathrm E}}
\def\var{{\mathrm {var}}}
\def\P{{\mathrm P}}
\def\cpset{\mathcal{D}}
\def\pp{{\mathrm p}}
\def\Hyp{{\mathrm H}}
\def\one{{\mathbf 1}}
\def\eps{{\varepsilon}}
\def\const{{\mathrm{const}}}
\def\arg{{\mathrm{arg}}}
\def\diag{{\mathrm{diag}}}
\def\bR{{\mathbb R}}
\def\cN{{\mathcal N}}
\def\T{{ \mathrm{\scriptscriptstyle T} }}
\g@addto@macro{\UrlBreaks}{\UrlOrds}
\g@addto@macro{\UrlBreaks}{\UrlOrds}
\DeclareOldFontCommand{\bf}{\normalfont\bfseries}{\mathbf}
\begin{document}
\begin{center}
	\begin{minipage}{.8\textwidth}
		\centering 
		\LARGE Change-Point Detection in Dynamic Networks with Missing Links\\[0.5cm]
		
		\normalsize
		\textsc{Farida Enikeeva}\\[0.1cm]
		\verb+farida.enikeeva@math.univ-poitiers.fr+\\
		Laboratoire de Math\'ematiques et Applications, UMR CNRS 7348, Universit\'e de Poitiers, France \\
		and \\
		\textsc{Olga Klopp}\\[0.1cm]
		\verb+ Olga.Klopp@math.cnrs.fr,+\\
		ESSEC Business School and CREST, ENSAE, France
	\end{minipage}
\end{center}

	\begin{abstract}
		Structural changes occur in dynamic networks quite frequently and its detection is an important question in many situations such as fraud detection or cybersecurity. Real-life networks are often incompletely observed due to individual non-response or network size. In the present paper we consider the problem of change-point detection  at a temporal sequence of partially observed networks. The goal is to test whether there is a change in the network parameters. Our approach is based on the Matrix CUSUM test statistic and allows growing size of networks. We show that the proposed test is minimax optimal and robust to missing links. We also demonstrate the good behavior of our approach in practice through simulation study  and a real-data application. 
	\end{abstract}

\section{Introduction}
Most of the real-life networks, such as  social networks or  biological networks of neurons connected by their synapses, evolve over the time.  Detecting possible changes in a temporal sequence of networks is an important task with applications in  such areas as intrusion detection or health care monitoring. In this problem we observe a sequence of graphs each of which is usually sparse with large dimension and heterogeneous degrees. 
The underlying distribution of this sequence of graphs  may  change at some unknown time moment called change-point.  The goal of this paper is to design  a testing procedure that allows the detection of the presence of a change-point. 
	
Many of the real-life networks are only partially observed \citep{MissingHandcock,MissingGuimer}. The exhaustive exploration of all interactions in a network requires significant efforts and can be expensive and time consuming. For example, graphs constructed from survey data are likely to be incomplete, due to non-response or drop-out of participants. Another example is online social network data. The gigantic size of these networks requires  working with a sub-sample of the network \citep{crawling}. In all these situations, being able to  infer the properties of the networks from their partial observations is of particular interest.  To the best of our knowledge,   change-point detection for  networks with missing links has not been considered in the literature. In the present paper we focus  on the case when we only have access to partial observations of the network and we  propose an efficient procedure for detection and estimation of a change-point that is adaptive to the missing data.

In real-life networks both the entities and relationships in a network can vary over  time. In the literature we can often find  approaches that detect vertex-based changes in a time series of graphs assuming a fixed set of nodes. Nevertheless, changes in the set of nodes are also quite frequent in applications. For example, the vertices of a citation network are scientific papers and an edge connects two papers if one of them cites the other one. New vertices are constantly added to such networks. Likewise, the set of nodes of the Web, social networks or, more generally,  communication networks are constantly evolving and changing.  To account for this possibility we generalize our approach to the case when nodes of the network may come and go. Considering graphs with varying number of nodes calls for a more general non-parametric model.  In the present paper we build on a popular graphon model and show that our detection procedure can be adapted to this more general setting.

\subsection{Contributions and comparison with previous results}
We consider the problem of hypothesis testing for the presence of a change-point, commonly known as {\it change-point detection}. The related problem of estimating  the unknown change-points and their number is  usually called  the {\it change-point localization} problem. Our primary motivation is to address  the following fundamental question: {\it What is the detection boundary in the problem of change-point detection in dynamic networks?} That is, we are interested in the smallest amount of change in the underlying distribution of the network such that successful detection of a change-point is still possible.  

The change-point detection in dynamic networks  and the related problem of the change-point localization  have attracted  considerable attention in the past few years.  Among others, the problem of change-point detection has been considered in \citep{PeelChangePointDetection} where the authors introduce a generalized hierarchical random graph model with a Bayesian hypothesis test. \cite{Wang2017} consider hierarchical aspects of the problem.  A model based on  non-homogeneous Poisson point processes with cluster dependent piecewise constant intensity functions and common discontinuity points is considered by \cite{Corneli2017}.  The authors of \citep{Corneli2017} propose a variational expectation maximization algorithm for  the change-point localization in this setting.  More recently, in \citep{Hewapathirana2020} the spectral embedding approach  is applied to the problem of   detection of vertex-based changes in dynamic networks. In \citep{Zhang2020} the focus is on the  online detection of a change in the community structure using a subspace projection procedure based on  Gaussian model setting. An eigenspace based statistics is applied in \citep{Cribben2015} to the problem of  localization of changes in  the community structure for stochastic block model.  These works focus mainly on the computational aspects of the problem without theoretical justifications.

In \citep{Wang2014} the authors introduce two types of scan statistics for change-point detection for
time-varying stochastic block model.  Although the authors of \citep{Wang2014} provide an asymptotic analysis of the test power for a simple situation of change in a single community connection probability, the  testing procedure lacks a threshold allowing for testing at a given significance level.
 The problem of multiple change-point  localization is studied by \cite{Zhao2019ChangepointDI}. The change-point algorithm proposed in this paper  is built upon a refined network estimation procedure. The authors of \citep{Zhao2019ChangepointDI} show consistency of this procedure. A very complete comparison of different change point localization algorithms  (including the one introduced in the present paper) is given in \cite{sulem2022graph}.

A different line of work describes a general nonparametric approach to change-point detection for a general data sequence \citep{Chu2019,Chen2015, Wang2018, pilliat2020optimal}. For example,   a graph-based non-parametric testing procedure is proposed in \citep{Chen2015} and is shown to attain a pre-specified level of type I error. Another related problem  is anomaly detection in dynamic networks. Here the task is to detect abrupt deviation of the network from its normal behavior. A comprehensive survey on this topic is given in~\citep{Akoglu2015}.

To the best of our knowledge, none of the existing works provides  the minimax separation rate for the change-point detection problem in dynamic network.  We can  only find some partial answers. In particular, the results of \citep{CarpentierChangePointGraphs} can be applied to the case of a known location of the change-point.   \citeauthor{CarpentierChangePointGraphs} obtain the minimax separation rate for  the  two sample test  and consider separation in spectral  and Frobenius norms. In the present paper we are interested in a more general setting where no prior information about the existence or location  of change-point is given, which is more realistic and applicable to real data. The problem of two sample test is also considered in \citep{chen2020hypothesis} where the authors propose   a test statistic similar to the one introduced in ~\citep{CarpentierChangePointGraphs} which is based on the singular value of a generalized Wigner matrix. They  apply this approach to the problem of change-point localization and derive consistency results. 

We provide the  minimax separation rate for the spectral norm separation (in Section \ref{sec:MatrixCUSUM} we explain  why we think that the spectral norm is an appropriate choice for this problem) in the case of an unknown change-point  location with  missing links. Besides the lower  bound on the minimax separation rate, we also provide a test procedure based on the spectral norm of the matrix CUSUM statistics that is nearly minimax optimal. Our focus is on the challenging case  when the networks are only partially observed. Missing values are a very common problem in the real life data. Usual imputation methods require observations from a homogeneous distribution. In the presence of unknown change-points such methods are not expected to perform well. This, in turn, will impact the performances of the change-point detection and estimation  methods. These methods, in their  large  majority, are designed for the case of  complete observations and we fall into a vicious circle.  The only works  considering missing values in the context of change-point detection that we are aware of are \citep{BuhlmannChangePointMissing}, \citep{Xie_2013}, and \citep{bertille_2022}. In \citep{BuhlmannChangePointMissing} the authors consider the problem of multiple change-point  localization  for graphical models.  \citeauthor{Xie_2013} propose a fast method for online tracking of a dynamic submanifold underlying very high-dimensional noisy data.  \citep{bertille_2022} considers change-point estimation in partially-observed, high-dimensional time series.

We also propose a new procedure for {\it change-point estimation}.  The problem of a single change-point estimation in a network generated by a dynamic stochastic block model  has been considered, among others, in \citep{MichailidisChangePoint,YuChangePointGraphs,yu2021optimal}.   \citeauthor{MichailidisChangePoint} establish the rate of convergence for the least squares estimate of the change-point and of the parameters of the model. They also derive the asymptotic distribution of the change-point estimator. The problem of multiple change-points localization has been studied by \cite{YuChangePointGraphs}. The authors of \citep{YuChangePointGraphs}  provide optimal localization rate in the case when the magnitudes of the changes in the data generating distribution is measured using Frobenius norm. More recently the methods of \citep{YuChangePointGraphs} have been extended in \citep{yu2021optimal} to the problem of online change-point localization.   The algorithms proposed in \citep{yu2021optimal} and  \citep{YuChangePointGraphs} require two independent samplings. Unlike these  methods, the algorithm that we introduce only requires one independent sample which is a more realistic scenario in many applications.

Usually in works on change-point detection and localization in dynamic networks the set of nodes is assumed to be fixed, e.g. \citep{CarpentierChangePointGraphs,YuChangePointGraphs} or, in asymptotic setting, the number of nodes is assumed to go to infinity, e.g. \citep{chen2020hypothesis}. Both settings may be limiting in some practical situations where the set of network's nodes may change but not forced to go to infinity.  Another common assumption in the existing literature is to suppose that the observed networks are mutually independent, notable exception being \citep{padille2022change,xu2023online}. Using a popular graphon model, we propose a different paradigm when the underlying distribution of the network is independent of the number of nodes and networks may be time-dependent. We  provide an upper bound condition on the  separation rate that guarantees the detection of a change 
for two commun classes of graphons, $K$-step graphons and H\"older continuous graphons. We prove that our procedure is minimax optimal for $K$-step graphons (up to a logarithmic factor and dependency on $K$).

 To summarize, the key contributions of the paper are:  1) we obtain sharp (up to numerical constants)  minimax detection boundary in spectral norm for  the problem of the change-point detection in dynamic networks; 2) we introduce a new and more flexible notion of sparsity (see Section \ref{modeling_sparsity}); 
3) we generalize our setting to  graphon model which allows  us to consider time-dependent networks with a set of nodes that can change; 4) we obtain upper and lower bounds on the minimax separation rate for  change-point detection in the step graphon model; 
5) we introduce a new procedure for  change-point estimation and show its consistency.

The rest of the paper is organized as follows. We start by summarizing the main
notation used throughout the paper in Section \ref{subsec:notation}.  We introduce our model  in the case of a fixed set of nodes in Section \ref{sec:model} and, in Section \ref{sec:change_point}, we provide our main results for this case.  We consider a more general setting which allows changes in the set of nodes and dependent networks in Section \ref{changing_nodes}. The numerical performance of our method is  illustrated in Section~\ref{sec:simulations}.

\subsection{Notation}
\label{subsec:notation}
We start with some basic notation used in this paper. For any matrix $M$, we denote by $M_{ij}$ its entry in the $i$th row and $j$th column.  The notation $\diag(M)$ stands for the diagonal of a square matrix $M$. The column vector of dimension $n$ with unit entries is denoted by $\one_n=(1,\dots,1)^\T$ and the column vector of dimension $n$ with zero entries is denoted by $\boldsymbol{0}_n=(0,\dots,0)^\T$. The identity matrix of dimension $n$ is denoted by $\mathrm{id}_n$. For a set $A$, we denote by $\one_{A}$ its indicator function.

For two matrices $M$ and $N$ of the same size, their  Hadamard (elementwise) product is denoted by $M\odot N$.  
For any matrix $M \in \mathbb{R}^{n \times n}$, $\| M \|_F$ is its Frobenius norm, $\|M\|_{2\rightarrow 2}$ is its operator norm. and $\|M\|_{\infty} = \max_{ij} |M_{ij}|$ is the largest absolute value of its entries. The column-wise 1,$\infty$-norm of $M$ is denoted by $\|M \|_{1,\infty} = \max_j \sum_i |M_{ij}|$. 
For two sequences $a_n$ and $b_n$ we write $b_n=\Omega(a_n)$ if there exist $C>0$ and $n_0\in\mathbb N$ such that  $b_n\geq Ca_n$ for any $n>n_0$.
We define the function $q(t)=\sqrt{t(1-t)}$ for $t\in[0,1]$ that will control the impact of the change-point location on the rate. 
We denote by $\mathcal{C}_n$ the set of all symmetric connection probability matrices:
\begin{equation*}
\mathcal{C}_n=\Bigl\{\Theta\in [0,1]^{n\times n}:\ \Theta=\Theta^\T \Bigr\}
\end{equation*}
and by $\mathcal{C}^0_n$  the set of all symmetric connection probability matrices with zero diagonal:
\begin{equation*}
\mathcal{C}^0_n=\Bigl\{\Theta\in \mathcal{C}_n :\ \diag(\Theta)=0 \Bigr\}.
\end{equation*}

\section{Modeling  dynamic networks with missing links}\label{sec:model}

Assume that we have $T$ consecutive independent observations of a network modeled by the adjacency matrix $A^t\in\{0,1\}^{n\times n}$  ($1\le t\le T$) of a simple undirected graph  $G^t$
with $n$ vertices. 
We will relax the assumption of independent $A^t$ in Section~\ref{changing_nodes} considering a more general graphon model.

We assume that at  time  $t$ the network follows  {\it inhomogeneous random graph model:}  for $i<j$, the elements $A_{ij}^t$ of the matrix $A^t$ are independent Bernoulli random variables with the success probability $\Theta_{ij}^{t}\in [0,1]$.  It means that the edge $(i,j)$ is present in the graph $G^t$ with the probability $\Theta_{ij}^{t}$.  We consider undirected  graphs with no loops. Then, the matrix $A^t$ is symmetric and has zero diagonal.  The corresponding matrix of connection probabilities is denoted by $\Theta^{t}\in [0,1]^{n\times n}$; it is also symmetric with zero diagonal.

Often in practice the dynamic network  is only partially observed. In this case, instead of observing $A=\bigl\{A^t,\  1\le t\le T\bigr\}$, we observe  a sequence of matrices  $ Y= \bigl\{Y^t,\ 1\le t\le T\bigr\}$ where each matrix $Y^t$ contains  the entries of the adjacency matrix $A^t$ that are available at time $t$.  We say that we sample the pair $(i,j)$ at the time moment $t$, if we observe the presence or absence of the corresponding edge. We denote by $\Omega^t$ the sampling matrix such that $\Omega^t_{ij} = 1$ if the pair  $(i,j)$ is sampled at the time $t$, $\Omega^t_{ij} = 0$ otherwise. As the graph is undirected, the sampling matrix is symmetric. Importantly, our methods for change-point detection and estimation do not require the knowledge of the sample matrices $\Omega^t$.

 We assume that for each $t$, the entries $\Omega^t_{ij}$ ($1\le i<j\le n$) are independent random variables and that $\Omega^t$ and $A^t$ are also independent.  We denote the  expectation of $\Omega^t$ by  $\Pi \in \mathbb{R}^{n\times n}$. Then, for any pair $(i,j)$ and any $t$, $\Omega^t_{ij}\sim\mathrm{Bernoulli}(\Pi_{ij})$ and,  for any $i=1,\dots, n$, we set $\Pi_{ii}=1$. Note that we can attribute any value to the diagonal $\text{diag}(\Omega^t)$, since  observing or not the diagonal elements does not carry any information about the diagonal of $A^t$ that vanishes by definition. We assume that, for any pair $(i,j)$, we have non-zero probability to observe $A_{ij}$, that is,  $\Pi_{ij}>0$. For simplicity, we also assume that $\Pi$ does not depend on $t$. This means that the probability of observation for each vertex is not changing over the time. This assumption is realistic in many situations, for example, when analysts sub-sample a very large network. Our proofs may be extended to the situation of $t$-dependent $\Pi$  at the price of additional technicalities and we choose to avoid~it.  We will use  notation $Y^t\sim \mathrm{IRGML} (\Theta^t,\Pi)$ that stands for a realization of an inhomogeneous random graph with missing links according to  connection probability matrix $\Theta^t$ and sampling matrix~$\Pi$.

We can write our observations using  following ``signal-plus-noise" model: 
\begin{equation}\label{Missing_model}
Y^{t}=\Pi\odot\Theta^{t}+W^{t},\quad 1\le t\le T,
\end{equation}
where  $W^t\in [-1,1]^{n\times n}$ is the matrix of centered independent Bernoulli random variables  $W_{ij}^{t}$  with the success probability $\Pi_{ij}\Theta^t_{ij}$ and $Y^t \in \{0,1\}^{n\times n}$ is the matrix with the elements
$Y_{ij}^{t}=\Omega^t_{ij}A^t_{ij}$. 



\subsection{Modeling sparse networks} \label{modeling_sparsity}

 Real-life networks are  usually sparse with  a number of connections  that is  much smaller than the maximum possible one,  which is proportional to $n^2$. This implies that  $\Theta^t_{ij}$ may change with $n$ and, in particular, $\Theta^t_{ij}\rightarrow 0$ as $n\rightarrow \infty$ for some (or all) $(i,j)$.
Let $\rho_n=\underset{t}{\max}|\Theta^t|_{\infty}$.
In the literature on the sparse network estimation $\rho_n$ is usually called sparsity parameter and it is assumed that $\rho_n\to 0$ as $n\rightarrow \infty$. In the present paper we consider a more general notion of sparsity based on the column-wise 1,$\infty$-norm of the connection probability matrix.

Let $\kappa_n= \underset{t}{\max}\;\| \Theta^t\|_{1,\infty}$. We will say that the network is \textit{sparse} if, on average, the degree of each node is much smaller than the maximal possible number of connections in the network, that is, we assume that  $\kappa_n/n\rightarrow 0$ as $n\rightarrow \infty$. In particular, we have  $\kappa_n\leq \rho_n n$. 
We will work with the set of all symmetric, zero diagonal connection probability matrices of sparsity at most $\kappa_n$:
$$
\mathcal M_n(\kappa_n)=\Bigl\{\Theta\in \mathcal{C}^0_n:\   \|\Theta\|_{1,\infty}\le\kappa_n\;
\Bigr\}.
$$
This new notion of sparsity is more general than the  one based on the sup norm and has multiple advantages. First of all, this notion of sparsity  allows  each $\Theta_{ij}^t$  to decay at its own rate (or to be constant for some of them) which is a much more realistic scenario. Moreover, the methods for sparse network estimation often require the knowledge of the sparsity parameter $\rho_n$ (see, e.g., \citep{klopp_graphon, gao2015optimal})  whose  estimation is a tricky problem. In contrast to $\rho_n$, the parameter $\kappa_n$ can be estimated using the observed  degrees.  

In the case of model with missing links, we will need an additional parameter $\omega_{n}$, an upper bound on the mean of the observed degree of each node, $\max_{1\le t\le T}\|\Pi\odot \Theta^t\|_{1,\infty}\le \omega_{n}$.  In the particular case of uniform sampling with probability $p$, we have  $\omega_{n}=p\kappa_n$ and $\omega_{n}\le np\rho_n$. 
In the case of missing links, we will work with the  pairs of matrices $(\Theta^t,\Pi)$ with the sparsity level bounded by $\omega_{n}$ and consider the following set:
\begin{equation}\label{eq:set_sparse_Theta_Missing}
	\mathcal S_n(\omega_n)=\left \{(\Theta,\Pi)\in \mathcal C_n^0\times \mathcal C_n:  \|\Pi \odot\Theta\|_{1,\infty}\le\omega_n \right \}.
\end{equation}

\section{Change-point detection problem}\label{sec:change_point}

We suppose that the connection probability matrix $\Theta^t$ might change at some location $\tau\in\{1,\dots,T-1\}$: 
$$
\Theta^t=\Theta^0\one_{\{1\le t\le \tau\}}+(\Theta^0+\Delta\Theta^\tau)\one_{\{\tau+1\le t\le T\}},\quad t=1,\dots,T.
$$
Here $\Theta^0$ is the connection probability matrix before the change and $\Delta\Theta^\tau\in[-1,1]^{n\times n}$ is a symmetric jump matrix of a change that occurs at time $\tau$.  If $\Theta^t$ does not change, then $\Delta\Theta^{\tau}=0$ for all $1\le  \tau\le T-1$. 
 
We consider the problem of testing whether there is a change in $\Theta^t$ at some (possibly unknown) point  $\tau\in \cpset_T\subset \{1,\dots T-1\}$.  Depending on the set of possible change-point positions $\cpset_T$, we can formulate  two  different testing problems:  problem (P1) of testing the presence of a change at a given point $\tau$ with $\cpset_T=\{\tau\}$ and problem (P2) of  testing the change  at an unknown location within the set of all possible change-point locations $\cpset_T=\{1,\dots,T-1\}$.

The difficulty of assessing the existence of a change-point can be quantified  by what  is called the change-point energy. It is defined as the product of the operator norm of the jump in  the parameter matrix and the function $q(t)=\sqrt{t(1-t)}$ for $t\in[0,1]$. The function $q(t)$  quantifies the impact of  change-point location on the difficulty of detecting the change. Thus, we write the detection problem  in terms of  $q(\tau/T)\|\Pi\odot \Delta\Theta^{\tau}\|_{2\to 2}$,  {\it  the  change-point energy}. In the case of full observations the sampling matrix is given by $\Pi=\one_n\one_n^T$ and the energy equal to $q(\tau/T)\| \Delta\Theta^{\tau}\|_{2\to 2}$.

To formulate the hypothesis testing problem we define the set of pairs of matrices before and after the change at some location $\tau$ with the jump energy at least $r>0$:
\begin{equation}\label{def_GammaSet_Missing}
\mathcal W_{n,T}^\tau(\omega_n,r)=\Bigl\{\bigl \{(\Theta^a,\Pi), (\Theta^b,\Pi )\bigr\}\in \mathcal S^{\otimes2}_n(\omega_n):\  q(\tau/T)\bigl \|\Pi\odot (\Theta^a-\Theta^b)\bigr \|_{2\to 2}\ge r  \Bigr\}.
\end{equation}

Let $\mathcal W_n(\omega_n,0)$ denote the set without a jump:
\begin{align*}
\mathcal W_n(\omega_n,0)&=\Bigl\{\left \{\left (\Theta^a,\Pi\right ),\left (\Theta^b,\Pi\right )\right \}\in \mathcal S^{\otimes2}_n(\omega_n):\; \Theta^a=\Theta^b \Bigr\}.
\end{align*}
 Then,  the detection problem can be written as testing whether the  jump $\Delta\Theta^{\tau}$ is zero under the null,
\begin{equation}\label{eq:h_0_missing}
\Hyp_0:\ \left \{ (\Theta^0+\Delta\Theta^\tau,\Pi), (\Theta^0,\Pi)\right \}\in \mathcal W_n(\omega_n,0),\ \mbox{for all} \  \tau\in\cpset_T
\end{equation}
against the alternative hypothesis of a change in $\Theta^t$
\begin{equation}\label{eq:h_1_missing}
\Hyp_1:\ \left \{ (\Theta^0+\Delta\Theta^\tau,\Pi), (\Theta^0,\Pi)\right \}\in \mathcal W_{n,T}^\tau(\omega_n,\mathcal R_{n,\cpset_T})\ \mbox{for some}\ \tau\in\cpset_T.
\end{equation}
It is well known (see, e.g., \citep{Ingster&Suslina:2003}) that the performance of  a  test depends on how close the sets of measures under the null and under the alternative hypotheses are. As a consequence,  to determine  the smallest possible distance between the null  and the alternative hypothesis is a crucial question in minimax hypothesis testing. This question is formulated in terms of  the  radius $ \mathcal R_{n,\cpset_T}$,  the minimal  amount of energy  that guarantees the change-point detection.
In this paper, we are interested in conditions on the minimal  energy  that separated a detectable change from an undetectable one. 

\subsection{Matrix CUSUM statistic}\label{sec:MatrixCUSUM}
We start by introducing our test procedure. We observe the dynamic network~$Y=\{Y^t,\ 1\le t\le T\}$ defined in~\eqref{Missing_model}. 
We call \textit{a test} (or decision rule) any measurable binary function $\psi_{n,T}: Y\to \{0,1\}$ of the data. If its value  is equal to 1, we reject the null hypothesis and say that there is a change in $\Theta^t$. Otherwise we say that the dynamic network has no change in the connection probability matrix.

Recall that $Y^{t}\in\{0,1\}^{n\times n}$ with the elements $Y_{ij}^{t}=\Omega^t_{ij}A^t_{ij}$, where $\Omega^t$ is the sampling matrix with mean $\Pi$ (if there is no missing data, then $\Pi=\one_n\one_n^\T$ is the matrix of unit entries).
Define the following matrix process
\begin{equation}\label{def_Z}
Z_T(t)=
\sqrt{\frac{t(T-t)}T}\left (\dfrac{1}{t}\sum_{s=1}^{t}Y^s-\dfrac{1}{T-t}\sum_{s=t+1}^{T}Y^s\right ),\quad t=1,\dots,T-1.
\end{equation}
This process measures the difference between the average number of connections before and after the point $t$. Intuitively,  if there is a change in some entries of the parameter matrix $\Theta^t$ at time $\tau$, then, with high probability, the value of the process $Z_T$ at these entries will be maximal in the neighborhood of $\tau$.
We call the process~given by (\ref{def_Z}) Matrix CUSUM (Matrix Cumulative Sum) process  since it is related to the cumulative sums of $Y$ as
$$
Z_T(t)= \sqrt{ \frac{T}{t(T-t)}}\left [\sum_{s=1}^t Y^s-\frac tT \sum_{s=1}^T Y^s\right ].
$$

We can write our model~(\ref{Missing_model}) in the equivalent form
$$
Z_T(t)=-\mu_T^\tau(t) \Pi\odot\Delta\Theta^\tau+\xi(t),\quad t=1,\dots,T-1,
$$
where 
\begin{equation}\label{def_mu}
\mu_T^\tau(t)= \sqrt{\frac{t(T-t)}T}\left (\frac{\tau}{t}\one_{\{\tau+1\le t\le T\}}+\frac{T-\tau}{T-t}\one_{\{1\le t\le\tau\}}\right )
\end{equation}
and the random matrices
\begin{equation}\label{def_xi}
\xi(t)= \sqrt{\frac{t(T-t)}T}\left (\dfrac{1}{t}\sum_{s=1}^{t}W^{s}-\dfrac{1}{T-t}\sum_{s=t+1}^{T}
W^{s}\right )
\end{equation}
are centered. 
The function $\mu_T^\tau(t)$ attains its maximum  equal to $\sqrt T q(\tau/T)$ at the true change-point  $t=\tau$. Thus, for any  matrix norm, we have 
$$
\max_{1\le t\le T-1} \| \mathbb E (Z_T(t)) \|= \sqrt T q(\tau/T)\|\Pi\odot \Delta\Theta^\tau\|.
$$ 
  We need to  ensure that the norm of the jump in the parameter matrix is larger than the norm of the noise term $\xi(t)$.  We will use the test statistics based on the operator norm of~$Z_{T}(t)$ since we can control the operator norm of the noise term  using the matrix Bernstein inequality. On the other hand, we can easily see that the Frobenius norm is not a suitable choice here.   Indeed, assuming that for any $t$, $\Theta_{ij}^t \approx \rho_n \rightarrow 0$, we get $$\frac{\mathbb E \Vert \xi(t)\Vert^2_F}{n^2}\approx \rho_n  \gg \frac{\Vert \Delta\Theta^\tau\Vert^2_F}{n^2} \approx \rho^2_n.$$
Thus our detection procedure is the following one: if the operator norm of the Matrix CUSUM statistic is sufficiently large  at some point $t\in \cpset_T$, we conclude that there is a change in the connection matrix $\Theta^t$ of the network.
\subsection{Definitions from the minimax testing theory}\label{sec:defminimax}
In this section, we recall some basic definitions from the minimax testing theory.
 Let $Y=\{Y_t,\ 1\le t\le T\}$. Denote by $\P_{(\Theta^0,\Pi)}$ the measure of observations $Y$ under $\Hyp_0$ with $Y^t\sim \mathrm{IRGML}(\Theta^0,\Pi)$ and by $\P_{(\Theta+\Delta\Theta^\tau,\Pi),(\Theta,\Pi)}$  the measure of $Y$ under $\Hyp_1$ with  $Y^t\sim \mathrm{IRGML}(\Theta^t,\Pi)$ for $\Theta^t=\Theta^0\one_{\{1\le t\le\tau\}}+(\Theta^0+\Delta\Theta^\tau)\one_{\{\tau< t\le T\}}$, $\tau\in\cpset_T$.  
Let $\psi_{n,T}:Y\to \{0,1\}$ be a test for one of the problems (P1) or  (P2).
\begin{definition}
	The type I error of $\psi_{n,T}$ is given by
	$$
	\alpha(\psi_{n,T})=\sup_{\left (\Theta^0,\Pi\right )\in \mathcal S_n(\omega_n)}\P_{(\Theta^0,\Pi)}\Bigl\{ \psi_{n,T}=1 \Bigr\}
	$$
	and the type II  error of  $\psi_{n,T}$ is defined as  
	$$
	\beta(\psi_{n,T},R_{n,\cpset_T})
	=\sup_{\tau\in\cpset_T} \Biggl \{\sup_{\bigl \{ (\Theta^0+\Delta\Theta^\tau,\Pi),(\Theta^0,\Pi) \bigr \}\in \mathcal W_n^\tau(\omega_n,\mathcal R_{n,\cpset_T}) } 
	\P_{(\Theta+\Delta\Theta^\tau,\Pi),(\Theta,\Pi)}\Bigl\{ \psi_{n,T}=0\Bigr\}\Biggr \}.
	$$
\end{definition}

Let $\alpha\in (0,1)$ be a given significance level. Denote by $\Psi_\alpha$  the set of all tests of level at most $\alpha$:
$$
\Psi_\alpha=\left\{\psi_{n,T}:\ \alpha(\psi_{n,T})\le \alpha \right\}.
$$
\begin{definition}
	A test $\psi_{n,T}^*\in \Psi_\alpha$ is called minimax if
	$$
	\beta(\psi_{n,T}^*,\mathcal R_{n,\cpset_T}) =\inf_{\psi_{n,T}\in\Psi_\alpha} \beta(\psi_{n,T}, R_{n,\cpset_T}).
	$$
\end{definition}
It is known (see \citep{Ingster&Suslina:2003}, Theorem 2.1, p. 55)  that, for any $\mathcal R_{n,\cpset_T}>0$, the minimax test $\psi_{n,T}^*$ exists and 
\begin{equation}\label{eq:lbTV}
\beta(\psi_{n,T}^*,\mathcal R_{n,\cpset_T}) =\inf_{\psi_{n,T}\in\Psi_\alpha} \beta(\psi_{n,T}, \mathcal R_{n,\cpset_T})\ge 1-\alpha-\frac12\inf_{\mathbb P_1\in[\mathcal P_1]} \|\mathbb P_0-\mathbb P_1\|_{1},
\end{equation}
where  $\mathbb P_0$ is the measure of observations $Y$ corresponding to the null hypothesis $\Hyp_0$ and 
$[\mathcal P_1]$ is the convex hull of set of measures $\mathcal P_1=\mathcal P_1(\mathcal R_{n,\cpset_T})$ corresponding to the alternatives $\Hyp_1$, and $\|\cdot\|_1$ is the $L_1$-distance. It might happen that the minimax test $\psi_{n,T}^*$ is trivial, i.e. $\beta(\psi_{n,T}^*,\mathcal R_{n,\cpset_T})=1-\alpha$, $\forall \alpha\in(0,1)$. In this case the global risk of testing defined as the sum of two testing errors is equal to 1 and the hypotheses $\Hyp_0$ and $\Hyp_1$ are not distinguishable.  This happens if some points of the set of alternatives are too close to the null hypothesis set. To avoid this problem, we remove a ball of radius $\mathcal R_{n,\cpset_T}$ from the set of alternatives $\Hyp_1$. 
 Therefore, it is of crucial interest to know what are the conditions on the  minimum radius $\mathcal R_{n,\cpset_T}$ that guarantee the existence of a non-trivial minimax test. These conditions are formulated in terms of the minimax separation rate.

\begin{definition}\label{def:nonasymp_sep_rate}
	Let $\alpha,\beta\in(0,1)$ be given. Let $\Psi_\alpha$ be the set of all tests $\psi$ of level at most  $\alpha$. We say that the radius $\mathcal R^*_{n,\cpset_T}$ is  $(\alpha,\beta)$-minimax detection boundary in problems (P1)--(P2) of testing against the alternative $\mathcal V_n(\kappa_n,\mathcal R_{n,\cpset_T})$ if 
	$$
	\mathcal R^*_{n,\cpset_T} =\inf_{\psi\in\Psi_\alpha} \mathcal R_{n,\cpset_T}(\alpha,\psi),
	$$
	where 
	$$
	\mathcal R_{n,\cpset_T}(\alpha,\psi)=\inf \Bigl\{\mathcal R>0:\ \beta(\psi,\mathcal R)\le \beta \Bigr\}.
	$$ 	
\end{definition}
The minimax detection boundary is often written as the product $\mathcal R^*_{n,\cpset_T}=C\varphi_{n, \cpset_T}$, where $\varphi_{n,\cpset_T}$ is called {\it minimax separation rate} and  $C$ is a constant  independent of $n$ and $T$.   We say that the radius $\mathcal R_{n,\cpset_T}$ satisfies the upper bound condition  if there exists a constant $C^*>0$ and a test $\psi_{n,T}^*\in \Psi_\alpha$ such that $\forall C>C^*$ $\beta(\psi_{n,T}^*, \mathcal R_{n,\cpset_T})\le \beta$.  We say that $ \mathcal R_{n,\cpset_T}$ satisfies the lower  bound condition if for any $0<C\le C_*$  there is no test of level $\alpha$ with type II error smaller than $\beta$. Our goal is to find the minimax separation rate  $\varphi_{n,\cpset_T}$ and two constants $C_*$ and $C^*$ such that 
$$
C_* \varphi_{n,\cpset_T}\le \mathcal R^*_{n,\cpset_T}\le C^* \varphi_{n,\cpset_T}.
$$


\subsection{Change-point  detection at a given location}\label{sec:test_singlepoint}
We first consider problem (P1) of testing the presence of a change at some given point $\tau$. Using the Matrix CUSUM  statistic, we define the following decision rule
\begin{equation}\label{eq:test:P1_wo_log}
\psi_{n,T}^\tau(Y)= \one_{\bigl\{\|Z_{T}(\tau)\|_{2\rightarrow 2}>H_{\alpha,n}\bigr\}},
\end{equation}
where, given the significance level $\alpha\in(0,1)$,  the threshold is 
\begin{equation}\label{eq:H_P1_wo_log}
H_{\alpha,n}=2\sqrt 2 (1+\epsilon) \sqrt {\omega_n} + C_\epsilon \log\Bigl(\frac {2n} \alpha\Bigr).
\end{equation}
Here $\epsilon\in(0,1/2]$ and $C_\epsilon$ is an absolute constant depending on $\eps$, see Lemma~\ref{lem:matrixBernstein}. The result below provides the upper  detection boundary.

\begin{theorem}\label{th:upper_bound:P1_wo_log}
	Let $\alpha,\beta \in(0,1)$ be given significance levels. Assume that for some universal constant $C$ depending only on $\epsilon\in(0,1/2]$, 
	\begin{equation}\label{MinimaxRateP1}
	\mathcal R_{n,\tau} \ge 4\sqrt 2 (1+\epsilon) \left(\frac{\omega_n}T\right)^{1/2} +\frac{C}{T^{1/2}} \log 
	\left (\frac{n}{\alpha \wedge\beta}\right )
	\end{equation}
	Then, for the test $\psi_{n,T}^\tau$ defined in~(\ref{eq:test:P1_wo_log}) with threshold~(\ref{eq:H_P1_wo_log}) we have 
	$$
	\alpha(\psi_{n,T}^{\tau})\le\alpha\quad\mbox{and}\quad \beta(\psi_{n,T}^{\tau},\mathcal R_{n,\tau})\le \beta.
	$$
\end{theorem}
\begin{proof}
	The proof of this theorem is based on Lemmas~\ref{lem:typeIerror_wo_log} and~\ref{lem:typeIIerror_wo_log}. Lemma~\ref{lem:typeIerror_wo_log} implies  that $\alpha(\psi_{n,T})\le\alpha$. By Lemma~\ref{lem:typeIIerror_wo_log}, the type II error smaller than $\beta$ is guaranteed if $\mathcal R_{n, \tau}$ satisfies~(\ref{eq:rate_nonasymp}). Condition~(\ref{eq:rate_nonasymp}) follows from~\eqref{MinimaxRateP1} for a sufficienly large constant $C$. 
\end{proof}

 The key point of this proof is  to find a bound on the spectral norm of the noise term \eqref{def_xi} that allows to control the Type I and II errors. This is done using  the matrix concentration inequality from \citep{bandeira2016}, see Lemma \ref{lem:matrixBernstein}.
In the next theorem, we provide the lower bound condition on the energy under which hypotheses $\Hyp_0$ and $\Hyp_1$ are indistinguishable. The result is written in terms of the global testing risk $\eta=\alpha+\beta$ and involves the constant  $C_\eta=\log\bigl(1+4(1-\eta)^2\bigr)$ that will be used throughout the paper.

\begin{theorem}\label{th:lb_tau_unknown}
	Let $\alpha\in(0,1)$, $\beta\in(0,1-\alpha]$  be given significance levels and $\eta=\alpha+\beta$. 
	Assume that $\omega_n>\sqrt{2C_\eta}$. 
	Then the $(\alpha,\beta)$-minimax detection boundary in problem~(P1) satisfies the following lower bound
$$
	\mathcal R_{n,\tau}^* \ge  (2 C_\eta)^{1/4}\Bigl(\frac {\omega_n}{T}\Bigr)^{1/2}. 
$$
\end{theorem}

	The proof of Theorem  \ref{th:lb_tau_unknown} uses the fact that the minimax testing error in deterministic setting is always  greater than the one in the Bayesian setting. We reduce our testing problem to the Bayesian hypothesis testing  by imposing appropriate prior distributions on the parameters $\Theta^0$ and $\Theta^0+\Delta\Theta$ under $\Hyp_0$ and $\Hyp_1$  respectively. The prior is built in the following way:  under $\Hyp_0$ we have an Erd\H{o}s--R\'enyi model with the connection probability $\rho_n$ and, under $\Hyp_1$, we observe networks with the probability matrix that deviates from the  Erd\H{o}s--R\'enyi model  by a  matrix  with i.i.d. Rademacher entries. In particular, the choice of priors guarantees that,    $\Theta^0$ and $\Theta^0+\Delta\Theta$ belong to the sets defined in~\eqref{eq:set_sparse_Theta_Missing} and~\eqref{def_GammaSet_Missing} with high probability.   We have to find  a condition on the  change-point energy that guarantees a non-trivial testing with the type II error at least $\beta$. We use~\eqref{eq:lbTV}, the lower bound  on the type II error, which is written in terms of the total variation distance between the corresponding random measures that we need to bound from above. Usually, it is  hard to bound the total variation distance, and we use the following relationship between the TV-distance and the $\chi^2$-divergence: $\|P-Q\|_{\mathrm{TV}} \le \sqrt{\chi^2(P,Q)}$. The key point in the proof is to find an upper bound on the $\chi^2$-divergence. This upper bound relies on a new technically demanding bound on the Laplace transform of the Rademacher chaos proven in~\cite{Cortinovis2021}. For a Rademacher vector $\zeta$ we show that
	$$
	1+\chi^2(P,Q) \lesssim \E\exp\biggl(\frac{T \mathcal R_{n,T}^2}{n\omega_n} \cdot \frac 12\zeta^T(\one_n\one_n^T-\id_n)\zeta\biggr)
	\lesssim\exp\left(\biggl(\frac{T \mathcal R_{n,T}^2}{\omega_n}\biggr)^2\biggl(1-\frac{T \mathcal R_{n,T}^2}{n\omega_n}\biggr)^{-1}\right)
	$$
	which implies that the $\chi^2$-divergence is small if the rate satisfies $\mathcal R_{n,T}\asymp (\omega_n/T)^{1/2}$. 
	
\begin{remark}
Under a mild assumption that the sparsity parameter $\omega_n$ is of polylogarithmic order of $n$, that is,
$\omega_n=\Omega\left(\log^{2+\delta}n \right)$ for some $\delta>0$, the minimax detection boundary satisfies 
$$
 (2C_\eta)^{1/4} \Bigl(\frac {\omega_n}{T}\Bigr)^{1/2}
	\le \mathcal R^*_{n,\tau} \le 4 \sqrt 2 \Bigl(\frac{\omega_n}T\Bigr)^{1/2}.
$$
Note that the lower and upper bounds match up to a constant with the detection rate $(\omega_n/T)^{1/2}$.
Under the polylogarithmic condition the upper bound constant does not depend on the  given global testing risk $\eta$. 
On the other hand, the lower bound constant is bounded from above by $(2\log 5)^{1/4}\approx 1.34$.
\end{remark}

\begin{remark}
 If we restrict our model to the trivial case of Erd\"os--R\'enyi graph model with connection probability $\rho_n$,  it can be shown that the minimax detection boundary is given by
	$$
 C_\eta^{1/2}\Bigl(\frac{\rho_n}T\Bigr)^{1/2} \le \mathcal R_{n,\tau}^*\le  \Bigl(\frac 1\alpha+\frac1\beta\Bigr)^{1/2} \Bigl(\frac{\rho_n}T\Bigr)^{1/2} 
	$$
	and the test 
$$
\psi_{\alpha,n}^\tau={\displaystyle\one_{\bigl\{(n-1)|S^{\tau}_{n,T}|>\sqrt{\rho_n/\alpha} \bigr\}}}
$$
based on the test statistic estimating the change in the connection probability
$$
S^\tau_{n,T}= \sqrt{\frac{\tau(T-\tau)}{T}} \frac1{n(n-1)} \left(\frac 1\tau \sum_{t=1}^\tau \sum_{i\neq j} Y_{ij}^t - \frac 1{T-\tau}\sum_{t=\tau+1}^T \sum_{i\neq j} Y_{ij}^t \right)
$$
is rate optimal. 
Note that in this case test~\eqref{eq:test:P1_wo_log} is suboptimal.
\end{remark}

\subsection{Testing for change-point  at an unknown location with missing links}\label{sec:misslinks}
In this section, we  study problem (P2) of testing the presence of a change at an unknown location.  
To build a test in the case of  unknown change-point location a natural idea is to use a decision rule based on the maximum of the matrix CUSUM statistic over a subset $\mathcal T$ of $\cpset_T$: $L_{n,T}(Y)= \max_{t\in \mathcal T}\|Z_{T}(t)\|_{2\rightarrow 2}$.   For example,  we can take the whole set $\mathcal T=\cpset_T$. However, the choice of the set $\mathcal T$ may be optimized by taking  a dyadic grid of $\{1,\dots,T-1\}$ (see, for example, \citep{LiuGaoSamworth2021}). 
Define the dyadic grid on the set $\cpset_T=\{1,\dots,T-1\}$ as $\mathcal T=\mathcal T^L \cup \mathcal T^R$, where 
\begin{equation}\label{eq:dyadic_grid}
\mathcal T^L=\Bigl\{2^{k},\  k=0,\dots, \lfloor \log_2 (T/2)\rfloor  \},\quad \mathcal T^R= \Bigl\{T-2^{k},\  k=0,\dots, \lfloor \log_2 (T/2)\rfloor  \Bigr\}.
\end{equation}
Note that $|\mathcal T|=2+2\lfloor \log_2(T/2)\rfloor \le 2\log_2(T)$. 

Our decision rule  is based on the maximum of the matrix CUSUM statistic over the dyadic grid:
\begin{equation}\label{test_unknowntau_wolog}
\psi_{n,T}(Y)=\one_{\bigl\{\max\limits_{t\in\mathcal T}\|Z_T^Y(t)\|_{2\to2} >H_{\alpha,n,T}\bigr\}}
\end{equation}
where for some $\epsilon\in(0,1/2]$, the threshold is given by 
\begin{equation}\label{threshold_unknowntau_wolog}
H_{\alpha,n,T}=2\sqrt 2(1+\epsilon) \sqrt{\omega_n} + C_\epsilon \log \left(\frac{4n \log_2(T)}{\alpha}\right).
\end{equation}
 The parameter $\omega_{n}$ in \eqref{threshold_unknowntau_wolog} is an upper bound on the mean of the observed degree of each node. Note that, using Kolmogorov's law of large numbers, it  can be estimated by the maximum observed degree $\widehat\omega_{n}= \max_{1\le t\le T}\| Y^t\|_{1,\infty}$.
 The following theorem provides an upper detection condition for our test.

 \begin{theorem}\label{th:upper_bound_missing_loglog}
 	Let $\alpha,\beta \in(0,1)$ be given significance levels. 
 	 Suppose that for some $\epsilon\in(0,1/2]$ and for a constant $C$ depending only on $\eps$,
$$
	\mathcal R_{n, \cpset_T} \ge   4(1+\epsilon) \Bigl(\frac{6\omega_n}{ T}\Bigr)^{1/2}+\frac {C}{\sqrt T} \log \left( \frac{4n\log_2(T)}{\alpha\wedge\beta}\right)
$$
 	Then, for the test defined in~(\ref{test_unknowntau_wolog})--(\ref{threshold_unknowntau_wolog}), we have   $\alpha(\psi_{n,T} )\le \alpha $ and $\beta (\psi_{n,T},\mathcal R_{n,\cpset_T})\le \beta$.
 \end{theorem}
 
 \begin{proof}
 The proof follows from  Lemmas~\ref{lem:typeIerrorMissing_wolog} and~\ref{lem:typeIIerror_missing_wolog}.
 \end{proof}
 \begin{remark}
 As in the case of testing at a given change-point location, assume that $\omega_n=\Omega\left(\log^{2+\delta}n \right)$ for some $\delta>0$.  Then, if $n\ge \log T$, the minimax detection boundary satisfies 
  	$$
(2C_\eta)^{1/4} \Bigl(\frac{\omega_n}{ T}\Bigr)^{1/2}
 \le \mathcal R^*_{n,\cpset_\tau} \le  4 \sqrt 6\Bigl(\frac{\omega_n}{ T}\Bigr)^{1/2}
 $$
 	and our test is minimax rate-optimal in the case of unknown change-point as well. Note that Theorems~\ref{th:upper_bound:P1_wo_log} and \ref{th:upper_bound_missing_loglog}  imply that the unknown location of the change point may have an impact on the detection boundary  only if  $\omega_n\lesssim (\log n + \log\log T)^2$,  that is, in the case of a very sparse network or a very large number of observations $T$.
 	
 \end{remark}



Theorems \ref{th:upper_bound_missing_loglog} and \ref{th:lb_tau_unknown}
provide the minimax separation rate in terms of the  operator norm of the Hadamard product of  $\Delta\Theta^\tau$ and $\Pi$. 
 Let us first consider the case of the uniform sampling with $\Pi_{ij}=p_n$ for all $(i,j)$ such that  $i\not =j$. In this case we have  $ \|\Pi\odot \Delta \Theta^\tau\|_{2\to 2}=p_n  \| \Delta \Theta^\tau\|_{2\to 2}$ and we get  the following result that provides the  minimax separation rate for the problem of testing for a change at an unknown location in a dynamic sparse network with links missing uniformly at random.

\begin{Corollary}[{\bf Minimax separation rate for spectral norm separation}]\label{cor:sep_rate}
	Consider problem \eqref{eq:h_0_missing}--\eqref{eq:h_1_missing} with $\omega_n=\Omega\left(\log^{2+\delta}n \right)$ for some $\delta>0$.  Assume that  links are missing uniformly at random with probability $p_n$ and that conditions of Theorems \ref{th:lb_tau_unknown} and \ref{th:upper_bound_missing_loglog}  are satisfied.  Then, for $n\ge \log T$,  the minimax separation rate $\varphi_{n,\cpset_T}$ 	for spectral norm separation satisfies:
$$
	\frac{(2C_\eta)^{1/4} }{p_n}\Bigl(\frac{\kappa_n}{T}\Bigr)^{1/2}\leq \varphi_{n,\cpset_T}\leq  \frac{4 \sqrt 6}{p_n} \Bigl(\frac{\kappa_n}{T}\Bigr)^{1/2}.
$$
\end{Corollary}

%
  For more general sampling schemes, we can quantify  how the missing observations affect our detection problem 
by the following "distortion" parameter:
\begin{equation}\label{def_K_n}
\delta_n:=\delta_n(\Pi,\Theta)=  \frac{\min_{ij} \Pi_{ij}}{\sqrt {r\vee 1}}\quad \text{where}\quad r=\mathrm{rank} (\Pi\odot \Theta).
\end{equation}

Using Lemma \ref{lem:Hadamard_LB2}, we have that $$\|\Pi\odot \Delta\Theta\|_{2\to 2}\ge \delta_n\|\Delta\Theta\|_{2\to 2}.$$ 
Now, Theorem \ref{th:upper_bound_missing_loglog} implies that, for any fixed $\alpha,\beta\in(0,1)$, $\omega_n=\Omega(\log^{2+\delta}n)$ and $n\ge \log T$, the risk of our test is bounded by $\eta=\alpha+\beta$ if the spectral norm of the jump matrix is sufficiently large:
$$
\|\Delta\Theta^\tau\|_{2\to2}\ge \dfrac{4}{\delta_n}\sqrt{\dfrac{6\,\omega_n}{ Tq(\tau/T)}}.
$$

\begin{remark}
	Note that using Lemma \ref{lem:Hadamard_LB2} we can improve \eqref{def_K_n} and replace $r=\mathrm{rank} (\Pi\odot \Theta)$ by 
	\[r^*=\underset{M=(M_{ij}):\,M_{ij}=(\Pi\odot \Theta)_{ij}\;\text{for}\;i\not = j}{\min} \mathrm{rank} (M).\]
	That is, we can modify the diagonal of $\Pi\odot \Theta$ to get a matrix of a smaller rank as, for example, in the case when bo%
	th $\Pi$ and 	$\Theta$ have a community structure.
\end{remark}

{\bf Example.} Let us consider  a network with two communities and missing communication. We have two communities with $k$ and $n-k$ nodes such that the links between the members of each community are fully observed. However, the links between the members of two different communities can be missing and are observed with the rate $p_n$. 
Then, after a suitable permutation,  $\Pi$ has the following block structure: $\Pi_{ij}=1$ if $1\le i,j\le k$ and $k+1\le i,j\le n$; otherwise $\Pi_{ij}=p_n$.

If the whole community changes its connection pattern, then  $\Delta \Theta$  also follows the Stochastic Block Model  with the same community structure as $\Pi$ and we have 
$$
\|\Pi\odot \Delta\Theta\|_{2\to2}\geq \frac{p_n}{\sqrt 2}\| \Delta\Theta\|_{2\to2}. 
$$ 

In this case, under the conditions of Corollary~\ref{cor:sep_rate}, the  minimax separation rate $\varphi_{n,\cpset_T}$ satisfies 
$$
\frac{ (2C_\eta)^{1/4} }{p_n}\Bigl(\frac{\kappa_n}{T}\Bigr)^{1/2}\leq \varphi_{n,\cpset_T}\leq  \frac{8}{p_n}\Bigl(\frac{3\kappa_n}{T}\Bigr)^{1/2}.
$$

\subsection{Change-point localization}\label{sec:estimation}
In this section, we turn to the twin problem of change-point  localization which  amounts
to  estimating the position of the change-point $\tau$. That is, we seek for an estimator of $\tau$,  $\widehat\tau_n$, such that $\vert \widehat\tau_n - \tau\vert\leq \eps$ with high probability. The ratio $\eps/T$ is usually called \textit{localization rate} of an estimator and the
estimator is deemed consistent if its localization rate vanishes as $T\rightarrow \infty$. 
As in the case of testing, our estimator is  based on  the operator norm of the Matrix CUSUM statistic and it is defined as follows:
\begin{equation}\label{eq:cp_estimator}
\widehat\tau_n\in\arg\max_{1\le t\le T-1} \|Z_T(t)\|_{2\to 2}.
\end{equation}
 Let $\Delta = \|\Pi\odot \Delta\Theta\|_{2\to2}$ denote  the magnitude of the  change in the matrix of connection probabilities. For presenting the results, we scale all the time points to the $[0, 1]$ interval,
 by dividing them by $T$. Let $x^*=\tau/T$ and $\widehat{x}=\widehat\tau_n/T$. The next proposition shows that our estimator is consistent.

 \begin{proposition}\label{thm_estimation}
 Let $\gamma\in(0,1)$. 
For any $\epsilon\in(0,1/2]$, with probability larger than $1-\gamma$,  the estimated change-point $\widehat{x}=\widehat{\tau}/T$ returned by \eqref{eq:cp_estimator} satisfies 
 \begin{equation*}
\vert \widehat{x}-x^*\vert \leq \dfrac{6\sqrt 2(1+\epsilon)}{\Delta\, q(x^*)}\Bigl(\frac{\omega_n}{T}\Bigr)^{1/2} +\dfrac{C}{\Delta \,q(x^*)}\frac{\log\left (2nT/\gamma\right )}{\sqrt{T}},
 \end{equation*}	
 where $C$ is a universal constant depending only on $\eps$.
 \end{proposition}

\begin{remark}
Note that the minimax lower bound on the localization error obtained in  \citep{YuChangePointGraphs} (without missing links, that is, when $\omega_{n}=\kappa_{n}$) implies that if  $ \Delta\leq \dfrac{\sqrt{\kappa_{n}}}{ \sqrt{33T}q(x^*)}$, then no consistent change-point estimator can exist.  
\end{remark}


 \section{Testing the change in the sparse graphon model}\label{changing_nodes}
In this section, we generalize our model to the case when the set of network's nodes may change  and to the case of dependent networks.
Considering graphs with varying number of nodes call for a more general non-parametric model. The idea is to introduce a well-defined "limiting object" independent of the networks size $n$ and such that the observed  stochastic networks can be viewed as  partial observations of this limiting object.
Such objects, called \textit{graphons}, play a central role in the theory of graph limits introduced by  \cite{LovaszSzegedy} (see, for example, \cite{LovaszBook}, \cite{borgs2006convergent}).  Graphons are symmetric  measurable functions $W :[0,1]^{2}\rightarrow [0,1]$ and, in the sequel, the space of graphons is denoted by $\mathcal W$.  Note that the graphon model  is quite popular  in several applications involving network data, e.g.,  for U.S. congress roll-call votes \citep{navarro}, the optimal control problem for dynamic systems \citep{Gao}, and interacting particles systems \citep{Ayi2023GraphLF}. 


Assume that we observe a dynamic network $A^t$ ($1\le t\le T$) where each adjacency matrix $A_t=(A_{ij}^t)$ is of size $n_t$ that may change with time. 
We allow nodes to appear and disappear from our networks and denote by $\mathcal V$ the set of all nodes of all networks, that is $\vartheta\in \mathcal V$ if there exists a time moment $1\le t\le T$ such that $\vartheta$ is a node of $A^t$. Let $N=|\mathcal V|$ be the  cardinality of $\mathcal V$. 

We consider the sparse graphon model and assume that each vertex $\vartheta \in\mathcal V$ is assigned to an independent random variable $\varepsilon_\vartheta$ uniformly distributed over $[0,1]$. Now, the edge $(i,j)$ is independently included in the network with probability $\Theta_{ij}^t=\rho_n W^t(\varepsilon_i,\varepsilon_j)$, where $W^t$ is the graphon which defines the probability distribution of the network at time instant $t$ and  $\rho_n$ is the sparsity parameter.
Conditionally on $\boldsymbol\varepsilon=(\varepsilon_{\vartheta})_{\vartheta\in \mathcal V}$, we assume that $A_{ij}^t$ are independent Bernoulli random variables with parameter $\Theta_{ij}^t$, $A_{ij}^t\sim\mathrm{Bernoulli}(\Theta_{ij}^t)$. Note that the networks  $A^t\in\{0,1\}^{n\times n}$  ($1\le t\le T$) are dependent through  the latent variables $\varepsilon_i$.
This is similar to the time-dependent network models  recently introduced in  \citep{padille2022change} and \citep{xu2023online}.  Dealing with more general case without assuming conditional independency is an interesting line of research, see, e. g.,  \citep{JMLR:v24:22-0845}.

We suppose that the function $W^t$ might change at some unknown time moment $\tau$, that is, we assume that 
\begin{equation}\label{eq:CP_Wt}
W^t(x,y)=W_1(x,y)\one_{\{1\le t\le \tau\}}+W_2(x,y)\one_{\{\tau+1\le t\le T\}}.
\end{equation}
Additionally, we can assume that the latent variables $\varepsilon_\vartheta$ may also change at time $\tau$. We will denote the new latent variables by  $\varepsilon'_\vartheta$ (for each $\vartheta$, $\varepsilon'_\vartheta$ may be depended on $\varepsilon_\vartheta$). Given the observations $A_t$ ($1\le t\le T$) of networks following the graphon model, we would like to test whether there is a change in the function $W^t$ at some unknown time moment $\tau$. 

Different graphons can be at the origin of the same distribution on the space of graphs. More precisely, two graphons $U_1$ and $U_2$ define the same probability distribution on graphs if and only if there exists two measure-preserving maps $\phi_1$, $\phi_2$ such that, for all $(x,y)\in[0,1]^2$, we have $U_1(\phi_1(x),\phi_1(y)) =U_2(\phi_2(x),\phi_2(y))$. 
	Thus, we need to consider the quotient space of graphons that defines the same probability distribution on graphs equipped with the following distance:
$$
		\delta_2^2(U_1,U_2) = \inf_{\iota\in \mathcal M} \iint_{[0,1]^2} |U_1(\iota(x),\iota(y))-U_2(x,y)|^2\,dx\,dy,
$$
	where $\mathcal M$ is the set of all measure-preserving bijections $\iota:[0,1]\to [0,1]$.  Here $\iota\in \mathcal M$ accounts for possible labels switching.

Let  $W(x,y):[0,1]^2\to [0,1]$ be an integrable function. We can define the following operator $W:L_2[0,1]\to L_2[0,1]$:
$$
Wf =\int_0^1 W(x,y)f(y)\,dy
$$
and denote by $\| W \|_{2\to2}$ the corresponding operator norm.  Now we can define the following operator-norm based  distance in the quotient space of graphons:
$$
	\delta(W^{\tau},W^{\tau+1})	= \inf_{\iota\in \mathcal M} \Vert W^{\tau}(x,y)-W^{\tau+1}(\iota(x),\iota(y))\Vert_{2\to 2}.	
$$

The problem of detecting a possible change in the graphon function can be written as the following hypotheses testing problem:
	\begin{align*}
\Hyp_0:\  \delta(W^{\tau},W^{\tau+1})=0\quad\mbox{for all} \  \tau\in\cpset_T\;\mbox{against}\; \Hyp_1:\ \delta(W^{\tau},W^{\tau+1})\ge \delta_{n,T} \;\mbox{for some $\tau\in\cpset_T.$}
\end{align*}

As in the  case of the inhomogeneous random graph model that we considered in Section \ref{sec:change_point}, our test is based on the   matrix CUSUM statistics.  Let $\tilde A^s$ denote the restriction of $A^s$ on the set of nodes that is common to all the networks. 
Denote by $n$ the size of this set. As we allow for a change in the latent variables $\varepsilon_\vartheta$ we should account for a possible mismatch of the labels. Let $\pi$ denote a permutation of $\{1, \dots,n\}$. For any matrix $A\in \mathbb{R}^{n\times n}$ denote by $A\circ \pi$ a matrix obtained from $A$ permuting simultaneously its rows and columns. Then, define
$$
Z^{\pi}_T(t) = \frac 1t \sum_{s=1}^t \tilde A^s - \frac 1{T-t}\sum_{s=t+1}^T \tilde A^s\circ\pi.
$$
Let $\pi^{*}\in \underset{\pi}{\argmin} \Vert Z^{\pi}_T(t)\Vert_{ 2 \rightarrow 2}$ and
$Z_T(t)=Z^{\pi^{*}}_T(t)$. Now, the test is  defined as follows:

\begin{equation}\label{test_graphon}
\psi_{n,T}^*(A)= \one_{\left\{\displaystyle{\max_{t\in \mathcal T} \frac{\|Z_T(t)\|_{2\to 2}}{H_{\alpha,n,t}^*}> 1}\right\}}
\end{equation}
where $\mathcal T$ is the dyadic grid~\eqref{eq:dyadic_grid} and  $H_{\alpha,n,t}^*$ is a threshold which will depend on the class of graphons under consideration. Note that we consider a  tricky situation when we allow for a change in the features of nodes. If we consider a simpler  setting, assuming that only the graphon function can change but features remain fixed, we can consider a simpler test, without taking a minimum over all possible permutations with the threshold given by \eqref{threshold_unknowntau_wolog}. The general case with possible change of the features that require minimization over all  permutations is, in general, computationally hard.  There are cases in which such minimization can be achieved through relaxation using the approach proposed in \cite{Bach_permutations}.

For $i\in\{1,2\}$, let $\mathcal W_i\subset \mathcal W$ be two classes of graphons, as, for example, step-function graphons or H\"older--smooth graphons. 
Define the following set of pairs of  graphons  separated by the distance at least~$\delta_{n,T}> 0$:
$$
\mathcal W(\delta_{n,T})=\Bigl\{\{W_1,W_2\}\in\mathcal W_1 \times \mathcal W_2:\ \delta(W_1,W_2)\ge \delta_{n,T}\Bigr\}.
$$

Denote by $\P_{W}$ the measure of observations of the networks $A^1,\dots,A^T$ with the connection probabilities given by $\Theta_{ij}^t=\rho_n W(\eps_i,\eps_j)$. Similarly, given a pair of graphons $W_1$, $W_2$ denote by $\P_{W_1,W_2}^\tau$ the measure of observations $A^1,\dots,A^T$   with a change at $\tau$. 
Then, the type I and II errors of testing for the graphon change-point detection are defined as follows,
\begin{align*}
\alpha(\psi_{n,T})&=\sup_{W\in \mathcal W_0} \P_{W}\Bigl\{\psi_{n,T}=1\Bigr\}\\
\beta(\psi_{n,T},\delta_{n,T})&=\sup_{\tau\in\cpset_T}\sup_{W_1,W_2\in \mathcal W(\delta_{n,T})} \P_{W_1,W_2}^\tau\Bigl\{\psi_{n,T}=0\Bigr\}.
\end{align*}
Let $\delta_{n,T}^*$ denotes the minimax separation rate for this problem. We consider two particular classes of graphons that have been considered in the literature on sparse graphon estimation: the class of step function graphons and the class of smooth graphons. We start by defining the class of step function graphons with $K$ steps:
\begin{definition}\label{def:K_step_graphon}
	 $\mathcal W_K$, the collection of $K$-step graphons, is the subset of graphons $W\in\mathcal W$ such that, for some symmetric matrix $Q\in \mathbb R^{K\times K}$ and some $\phi:[0,1]\to \{1,\dots,K\}$
	$$
	W(x,y)=Q_{\phi(x),\phi(y)}\quad \mbox{for all}\quad x,y\in[0,1].
	$$
\end{definition}
	
Assuming that the underlying graphons belong to  $\mathcal W_{K_i}$ for some $K_i\le K$ (we allow for a change in the number of steps) we define the following threshold:

\begin{equation}\label{threshold_kstep_graphon}
	H^{*}_{\alpha,n,t}=2(1+\epsilon)\sqrt{2n\rho_n } +4n^{3/4}\rho_n\sqrt{2T} q\Bigl(\frac t T\Bigr)\Bigl(K\log n  \Bigr)^{1/4}+C_\epsilon \log\left(\frac{8n\log_2(T)}\alpha\right),
\end{equation}
where $\epsilon\in(0,1/2]$ and $C_\epsilon$ is a universal constant depending only on $\eps$.

We have the following upper detection bound in the case of step graphons: 
\begin{theorem}\label{th:Kstepgraphon_upperbound}
	For $i=1,2$ let $W_i\in \mathcal W_{K_i}$ with $K_{1},\,K_{2}\le \frac{n}{\log n}$ and let $\alpha,\beta \in(0,1)$ be given significance levels such that $\alpha\wedge \beta \geq 8/n$.  Let $K_1,K_2\le K$. Assume that our observations $A^1,\dots,A^T$ follow graphon model \eqref{eq:CP_Wt} and that,
	for some $\epsilon\in(0,1/2]$ 
\begin{equation}\label{detection_boundary_graphon_stepf}
	q\Bigl(\frac\tau T\Bigr)	\delta(W_1,W_2) 
	\ge \frac{4\sqrt 6 (1+\epsilon)}{\sqrt {n\rho_n T}}  +10 q\Bigl(\frac\tau T\Bigr)\Bigl(\frac {K\log  n}{n} \Bigr)^{1/4} +\dfrac{4 C_\epsilon}{n\rho_n\sqrt{T}}\log\left(\frac{12n\log_2(T)}{\alpha\wedge \beta}\right).	
\end{equation}
 Then, for the test defined in~(\ref{test_graphon}) with threshold~(\ref{threshold_kstep_graphon}), we have that  $\alpha(\psi_{n,T}^{*} )\le \alpha $ and $\beta (\psi_{n,T}^{*},\delta_{n,T})\le \beta$. 
\end{theorem}

The detection boundary  in \eqref{detection_boundary_graphon_stepf} has three parts. The one of the  order $(K/n)^{1/4}$ is due to the `agnostic error"   coming from the variability of latent variables in the graphon model. The second one,  $(n\rho_nT)^{-1/2}$, is related to the sampling of the observed dynamic network.  The third part comes from the Bonferroni device in order to control the errors at the levels $\alpha$ and $\beta$.
The next theorem provides a lower bound on the minimax separation rate:
\begin{theorem}\label{th:Kstepgraphon_lowerbound}
	Let $\alpha\in(0,1)$ and $\beta\in(0,1-\alpha]$ be given significance levels. Let $\eta=\alpha+\beta$  and $C_\eta=\log(1+4(1-\eta)^2)$. Then the $(\alpha,\beta)$-minimax detection boundary for the change-point detection in step graphons model satisfies the following lower bound:
		$$
	q(\tau/T)\delta^*_{n,T} \ge  \left[\left (\frac{8}{3}(1-\eta)^2\right )^{1/4} \; \frac{q(\tau/T)}{n^{1/4}} \right] \vee\frac {\sqrt 2 (1-\eta) e^{-C_\eta/2}}{\sqrt{n\rho_nT}}.
	$$
\end{theorem}
The proof of the lower bound consists of two parts. The first one is obtained by bounding the Kullback--Leibler divergence between measures in the case when the node assignment vector changes but the connection probability matrix remains unchanged. This case covers, in particular, a possible mismatch between the labels before and after the change. We derive a bound similar to the data processing inequality from the information theory which  is then reduced to the divergence between two multinomial distributions.  The second part is similar to the one considered in Theorem~\ref{th:lb_tau_unknown} and considers the case of a change in the transition matrix with fixed node assignment. We control the chi-squared divergence between measures under the null and alternative hypotheses. 

Combined with Theorem \ref{th:Kstepgraphon_upperbound}, this result provides  the minimax separation rate for the problem of testing  a change in step-function graphon model:
\begin{remark}
Assume that the network satisfies the polylogarithmic assumption on the sparsity, $n\rho_n =\Omega(\log^{2+\delta}n)$ for some small $\delta>0$ and that $n\ge \log T$. Then,  Theorems~\ref{th:Kstepgraphon_upperbound} and~\ref{th:Kstepgraphon_lowerbound} imply that the minimax separation rate for step graphons satisfies the following upper and lower bounds which are given up to a constant,
$$
\frac {q(\tau/T)  }{n^{1/4}} +\frac 1{\sqrt{n\rho_nT}} \lesssim \,	q(\tau/T)\delta^*_{n,T}\, \lesssim q(\tau/T)\Bigl(\frac {K\log n}{n} \Bigr)^{1/4}+ \frac{1}{\sqrt{n\rho_n T}}.
$$
The network sampling rate $(n\rho_n T)^{-1/2}$ is exact (up to a constant) and the agnostic error rate $q(\tau/T)n^{-1/4}$ is optimal up to a $\log  n$ factor and the number of blocks.
\end{remark}


Next, we consider the class of smooth graphons.
\begin{definition}\label{def_Holder_graphon}
	For any $\gamma>0$, $L>0$,  the class of $\gamma$-H\"older continuous functions $\Sigma(\gamma,L)$ is the set of all functions $W:[0,1]^2\to [0,1]$ such that for all $(x',y'),(x,y)\in[0,1]^2$, 
	$$
	|W(x',y')-\mathcal P_{\lfloor \gamma \rfloor} ((x,y),(x'-x,y'-y))|\le L\left(|x'-x|^{\gamma-\lfloor \gamma\rfloor } +|y'-y|^{\gamma-\lfloor \gamma\rfloor }\right).
	$$
	where $\lfloor\gamma\rfloor$ is the maximal integer less than $\gamma$ and the function $(x',y')\mapsto \mathcal P_{\lfloor \gamma \rfloor}((x,y),(x'-x,y'-y))$ is the Taylor polynomial of degree $\lfloor\gamma\rfloor$ at point $(x,y)$. 
\end{definition}
Assuming that the underlying graphons belong to  $\Sigma(\gamma_{i},L_{i})$ for some $(\gamma_{i},L_{i})$ with $\gamma_i\ge \gamma>0$,  we define the following threshold:
\begin{equation}\label{threshold_holder_graphon}
	H_{\alpha,n,t}^*=4(1+\epsilon)n\rho_n\sqrt{2T}  +2n\rho_n \sqrt{T}q\Bigl(\frac{t} T\Bigr)\left(\frac{\log n}n\right)^{\frac{\gamma\wedge 1}2} +C_\epsilon \log\left(\frac{8n\log_2(T)}\alpha\right),
\end{equation}
where $\epsilon\in(0,1/2]$ and $C_\epsilon$ is a universal constant depending only on $\eps$.

The following theorem provides  the upper detection condition for H\"older continuous graphons: 
\begin{theorem}\label{th:Holdergraphon_upperbound}
For $i=1,2$ let $W_i$ be $\gamma_i$-H\"older continuous functions and let $\alpha,\beta \in(0,1)$ be given significance levels  such that $\alpha\wedge \beta \geq 1/n$.
Assume 
that, for some $\epsilon\in(0,1/2]$

 \begin{equation}\label{detection_boundary_graphon_holderf}
q\Bigl(\frac\tau T\Bigr)\delta^*_{n,T}	
\ge  \frac{4\sqrt 6(1+\epsilon)}{\sqrt{n\rho_n T}}  +2q\Bigl(\frac\tau T\Bigr)\left(\frac{\log n}n\right)^{\frac{\gamma\wedge 1}2} +\dfrac{4C_\epsilon}{\rho_n\,n\sqrt{T}}\log\left(\frac{12n\log_2(T)}{\alpha\wedge \beta}\right).
\end{equation}
where  $\gamma=\min(\gamma_1,\gamma_2)$.	Then, for the test defined in~(\ref{test_graphon}) with threshold~(\ref{threshold_holder_graphon}), we have  $\alpha(\psi_{n,T}^{*} )\le \alpha $ and $\beta (\psi_{n,T}^{*},\delta_{n,T}^*)\le \beta$. 
\end{theorem}

\section{\bf Numerical experiments}\label{sec:simulations}
In this section, we provide the study of numerical performance of our method.
For each setting we applied three tests: the test $\psi_{n,T}^\tau$  at the given change-point $\tau$ defined in~\eqref{eq:test:P1_wo_log}, the test $\psi_{n,T}$ over the dyadic grid $\mathcal T^d$ defined in~\eqref{test_unknowntau_wolog} (we add the point $\lfloor T/2\rfloor$ to the grid in our simulations) and   the  test $\psi_{n,T}^{full}(Y)$ based on the maximum over the whole set  $\cpset_T=\{1,\dots,T-1\}$. Each test is calibrated to the significance level $\alpha=0.05$.

The test defined in~\eqref{eq:test:P1_wo_log} is based on the threshold~\eqref{eq:H_P1_wo_log} depending on some given value $\epsilon\in(0,1/2]$ and an unknown universal constant $C_\epsilon$. 
 To overcome this difficulty,
we use a slightly different
threshold obtained from the matrix Bernstein inequality  (see, for example, Theorem 1.4 in \citep{tropp-user}). 
The threshold for the test $\psi_{n,T}^\tau$ is given by
$$
\tilde H_{\alpha,n,T}^\tau=\frac13 \frac{\log(n/\alpha)}{\sqrt Tq(\tau/T)} +\Bigl(\frac19 \frac{\log^2(n/\alpha)}{Tq^2(\tau/T)}+2\kappa_n\log(n/\alpha)\Bigr)^{1/2}.
$$
For test over the dyadic grid $\psi_{n,T}(Y) $ and the test $\psi_{n,T}^{full}(Y)$ we use the same threshold 
$$
\tilde H_{\alpha,n,T}(t)=\frac13 \frac{\log(n|\mathcal T|/\alpha)}{\sqrt Tq(t/T)} +\Bigl(\frac19 \frac{\log^2(n|\mathcal T|/\alpha)}{Tq^2(t/T)}+2\kappa_n\log(n/\alpha)\Bigr)^{1/2},\quad t\in \mathcal T. 
$$

In order to compare the performance of the tests under different regimes $(n,T,\tau)$, we introduce ``energy-to-noise ratio" defined by
$$ 
\mathrm{ENR}:=\mathrm{ENR}_{n,T}(\tau/T,\Delta\Theta^\tau)=\frac{q(\tau/T)\|\Delta\Theta^\tau\|_{2\to2}}{\sqrt{\kappa_n/T}}.
$$
This ratio  provides  a numerical upper bound on the minimax testing constant (see Theorem~\ref{th:upper_bound:P1_wo_log}). 
We denote by $\mathrm{ENR}^\tau$,  $\mathrm{ENR}^d$,  $\mathrm{ENR}^f$ the minimal detectable ENR for the tests $\psi_{n,T}^\tau$, $\psi_{n,T}$, and $\psi_{n,T}^{full}$, respectively. Here ``detectable" means that the average power of the corresponding test is equal to 1 over 100 simulations. Note that the lower bound constant for any test of level $\alpha$ with $\beta=0$ is equal to $c^*=\log^{1/4}(1+(1-\alpha)^2)/(4\sqrt 2)$ (see Theorem~\ref{th:lb_tau_unknown}). 

\subsection{Results for Stochastic Block Models}\label{sec:SBMsim}
In this section, we apply our method to four different scenarios of Stochastic Block Models (SBM). Recall that for an SBM model with $K$ communities and connection probability matrix $Q$ between the communities the matrix of connection probabilities $\Theta$ is defined as $\Theta=Z^T QZ$, where $Z\in\{0,1\}^{K\times n}$ is the membership matrix. Each row $i$ of the matrix $Z$ contains only zeros except  one entry $Z_{ij}$ that is equal to 1  if the node $i$ belongs to the community $j$. We suppose in these simulations that the membership matrix $Z$ does not change.  

\begin{paragraph}{\it Scenario 1: SBM with 2 communities, change in connection probability between communities.}
	We suppose that the network  follows the Stochastic Block Model with two balanced communities (block sizes are $\lfloor n/2\rfloor$ and $n-\lfloor n/2\rfloor$.)  The probabilities of connection between the communities change at  some point and  are given by the following matrices $	Q_1$ (before the change) and $	Q_2$ (after the change point):
	$$
	Q_1=\rho_n\begin{pmatrix}
	0.6 & 1\\
	1 & 0.6
	\end{pmatrix},\quad 	Q_2=\rho_n\begin{pmatrix}
	0.6 & \delta\\
	\delta & 0.6
	\end{pmatrix},\quad \delta\in[0,1].
	$$
\end{paragraph}
\vspace*{-5mm}
\begin{paragraph}{\it Scenario 2: SBM with 2 communities, change  in connection probability within one community.}
	As in the previous scenario, we assume that the network follow the stochastic block model with two balanced communities. In this scenario, the matrices $	Q_1$ (before the change) and $	Q_2$ (after the change point) are defined by
	$$
	Q_1=\rho_n\begin{pmatrix}
	1 & 0.5\\
	0.5 & 0.6
	\end{pmatrix},\quad 	Q_2=\rho_n\begin{pmatrix}
	\delta & 0.5\\
	0.5& 0.6 
	\end{pmatrix},\quad\delta\in[0,1].
	$$
\end{paragraph}
\vspace*{-5mm}
\begin{paragraph}{\it Scenario 3: SBM with 2 communities, change in connection probability within two communities.}
	Same setting as before, but now connection probabilities inside of both communities change: 
	$$
	Q_1=\rho_n\begin{pmatrix}
	1 & 0.2\\
	0.2 & 1
	\end{pmatrix},\quad 	Q_2=\rho_n\begin{pmatrix}
	\delta & 0.2\\
	0.2& \delta
	\end{pmatrix},\quad\delta\in[0,1].
	$$
\end{paragraph}
\vspace*{-5mm}
\begin{paragraph}{\it Scenario 4: SBM with 3 communities  and change in connection probability between communities.}
	We suppose that the network follow the stochastic block model with three balanced communities (the block sizes are $k_1=k_2=\lfloor n/3\rfloor$ and $k_3=n-k_1-k_2$).  The probabilities of connection between the communities change at  some point and  are given by the following matrices $Q_1$ (before the change) and $	Q_2$ (after the change point)
	$$
	Q_1=\rho_n\begin{pmatrix}
	0.6 & 1 & 0.6\\
	1 & 0.6 & 0.5\\
	0.6 & 0.5 & 0.6
	\end{pmatrix},\quad 	
	Q_2=\rho_n\begin{pmatrix}
	0.6 & 1-\delta & 0.6\\
	1-\delta & 0.6 & 0.5+\delta\\
	0.6 & 0.5+\delta & 0.6
	\end{pmatrix},\quad\delta\in[0,0.5].
	$$
	This scenario was  considered in~\citep{yu2021optimal} with the constant parameter $\delta=0.5$. In our  study we vary $\delta$ and report the test power in terms of the change in the  energy. 
\end{paragraph}

\begin{paragraph}{\it Scenario 5: SBM with 2 communities,  change in connection probability within communities.}
		We suppose that the network  follows the Stochastic Block Model with two balanced communities (block sizes are $\lfloor n/2\rfloor$ and $n-\lfloor n/2\rfloor$.)  The probabilities of connection within the communities change at  some point and  are given by the following matrices $	Q_1$ (before the change) and $Q_2$ (after the change point):
		$$
		Q_1=\rho_n\begin{pmatrix}
			0.6 & 1\\
			1 & 0.6
		\end{pmatrix},\quad 	Q_2=\rho_n\begin{pmatrix}
			\delta & 1\\
			1 & \delta
		\end{pmatrix},\quad \delta\in[0,1].
		$$
	\end{paragraph}

The sparsity $\rho_n$ is set to $n^{-1/2}$. The sparsity parameter $\kappa_n$ is set to $\kappa_n=\frac 12n\rho_n (\|Q_1\|_{1,\infty}\vee\|Q_2\|_{1,\infty})$ for Scenarios~1--3, 5 and to  $\kappa_n=\tfrac13n\rho_n (\|Q_1\|_{1,\infty}\vee\|Q_2\|_{1,\infty})$ for Scenario~4. In all the scenarios we have $\kappa_n\le \sqrt{n}$.


\subsubsection{Varying $n$ and $T$}

In this part we study the dependency of the energy-to-noise ratio $\mathrm {ENR}$ on $n$ and $T$. We report the results of simulations for five scenarios in Table~\ref{table:results}. We see  that globally the ENR decreases when the number of observations  $T$ increases. Some changes cannot be detected by our tests.  For example, for Scenarios 1, 2 and 4 the change-point is undetectable for $T=20$ and $n=100$ by any test. It can be explained by the small number of observations $T$ implying the threshold that is systematically greater than the value of the test statistic. Scenario 4 seems to be more difficult than the other ones, it might be, in particular, due to the fact that the allowed changes are within the interval $[0,0.5]$ that is smaller than in Scenarios 1--3. 

\begin{table}[htbp!]
	\footnotesize
	\begin{center}
		\begin{tabular}{|l|c|c|c|c|c||l|c|c|c|c|c|} 
			\hline
			& $n$ & $T$ & $\mathrm{ENR}^\tau$ & $\mathrm{ENR}^{d}$ & $\mathrm{ENR}^{f}$ && $n$ & $T$ & $\mathrm{ENR}^\tau$ & $\mathrm{ENR}^{d}$ & $\mathrm{ENR}^{f}$ \\
			\hline
			Scenario 1  & 100 & 20 &NA&NA&NA& 	Scenario 2 & 100 & 20 &NA&NA&NA\\
			$\tau/T=0.5$ & 100 & 50 & 2.1546   & 2.4397 &   2.6298 &	$\tau/T=0.5$&100 & 50 &2.1550  &  2.4766 &   2.7018 \\
			& 100 & 100 &2.0612   & 2.4197 &   2.6885&& 100 & 100 & 2.1834  & 2.5018  &2.7292 \\
			& 100 & 250 &  2.0546  &  2.4089 &   2.6923&&100 & 250 & 2.0857  & 2.5172   & 2.8049\\
			& 150 & 20 &2.1928&NA&NA&&150 & 20 &2.2389&NA&NA \\
			& 150 & 50 &2.1363  & 2.4515 &  2.6266&& 150 & 50 &2.1812  &2.5387 &  2.7175\\
			& 150 & 100& 2.0801 & 2.4763  &  2.6745&&50 & 100 &2.1239  &  2.5284   & 2.7812\\
			& 150 & 250&2.0360 & 2.4276  &  2.7408&&150 & 250 & 2.1588   & 2.5586  &  2.7984\\
			\hline		  
			Scenario 3  & 100 & 20 &2.2098&NA&NA&Scenario 4 & 100 & 50 &NA &NA & NA\\
			$\tau/T=0.5$& 100 & 50 &2.1253&2.4495&2.6296& 	$\tau/T=0.5$&100 & 100 &NA &NA & NA \\
			& 100 & 100 &2.0377&2.4452&2.6999&& 100 & 150  & 2.0235 & NA & NA \\
			& 100 & 250 &2.0137&2.4146&2.7386&&100 & 250 &2.0483&2.3748&2.7013\\
			& 150 & 20 &2.2528&NA&NA&&150 & 50 &NA &NA & NA \\
			& 150 & 50 &2.1212  & 2.4814  &  2.6815&& 150 & 100 &NA &NA & NA\\
			& 150 & 100&2.1508  & 2.4904  &  2.7168&&150 & 150 &2.0664 &2.4236 & NA\\
			& 150 & 250& 2.0583  & 2.4163  & 2.7743&&150 & 250 &2.0420&2.3713&2.2007\\
			\hline		  
		\end{tabular}
		
	\end{center}
	\normalsize
	\caption{The results of simulations for five proposed scenarios for $n=100$ and 150.}
	\label{table:results}
\end{table}

The ENR of the test $\psi_{n,T}^\tau$ is always smaller than the ENR of two other tests. Concerning the tests over the dyadic grid and over the whole set of observations, the test $\psi_{n,T}$ outperforms the test $\psi_{n,T}^{full}$ in the majority of parameter settings and scenarios.

\subsubsection{Estimating the sparsity}\label{sec:estim_sparsity}

In this section, we study the performance of our  tests  with the thresholds based on the estimated sparsity parameter $\widehat \kappa_n$. Taking the maximum of $1,\infty$-norms $\max_t \max_{j} A^t_{\cdot j}$ as an estimator will systematically overestimate $\kappa_n$.  Indeed, since $\E\max_j \xi_j =\sqrt{2\log n}(1+o(1))$, for $\xi_j\sim \cN(0,1)$ i.i.d., using the Gaussian approximation for binomial variables we can show that $\E (\max_{j} A_{\cdot j}^t)\asymp \kappa_n+ \sqrt{2 \kappa_n\log n}/n$.   To estimate $\kappa_n$,  we first  calculate $A^t_{\cdot j}=\sum_{i=1}^n A_{ij}^t$   for each $t=1,\dots,T$. Then, we obtain a robust estimator of the sparsity taking the 0.9-level empirical quantile of $A^t_{\cdot j}$. The final estimator of $\kappa_n$  maximizes the obtained estimated sparsities $\widehat\kappa_n^t$ over $t$:
	$
	\widehat \kappa_n=\max_t Q\Bigl(\Bigl\{\sum_{i=1}^n A_{ij}^t,\ j=1,\dots,n\Bigr\},0.9\Bigr).
	$
	Here $Q(Z,\alpha)$ denotes the $\alpha$-level empirical quantile of the sample $Z$.   The relative risk of estimation of $\kappa_n$ is shown for different values of $n$ and  sparsity $\rho_n$ in Table~\ref{tab:risk_kappa}. We can see that the choice of  the quantile $Q=0.9$ guarantees  $\widehat\kappa_n\ge \kappa_n$ which is important to maintain the test  significance at level  smaller than $\alpha$  and  it also has a good estimation precision for a large range of graph sparsity regimes. 
	\begin{table}[htbp!]
		\footnotesize
		\begin{center}
			\begin{tabular}{|l|| c|| c|c|c|c|c|} 
				\hline
				Sparsity  & $Q$ & $n=100$ & $n=200$ & $n=500$ & $n=1000$ & $n=1200$\\
				\hline
				& 0.7 &   -0.0334 &	-0.0352	& -0.0407& 	-0.0294	& -0.0212\\
				$\rho_n=n^{-0.1}$  & 0.8 &    -0.0018	 &   0.0070	  & -0.0082	  &  -0.0200	&   -0.0067 \\
				& 0.9 &       0.0586	&    0.0545	&    0.0230	  &  0.0029	 &   0.0154 \\
				\hline
				& 0.6 &   0.0239	& 0.0090&	-0.0008	& -0.0066 &	-0.0114\\
				$\rho_n=n^{-0.25}$  & 0.8 &       0.0350	&    0.0823	 &   0.0460	  &  0.0310	 &   0.0269\\
				& 0.9 &         0.1342	  &  0.1631	&    0.1029	 &   0.0746	 &   0.0677 \\		 
				\hline
				& 0.7 &        0.1299 &	0.1412	&0.0837	&0.0806	& 0.0850\\
				$\rho_n=n^{-0.5}$  & 0.8 &         0.2712	&    0.2189	 &   0.1800	 &   0.1489	 
				&   0.1486\\
				& 0.9 &            0.4892	&    0.3719	 &   0.3396	  &  0.2623	  &  0.2577 \\		 
				\hline  
			\end{tabular}
		\end{center}
		\normalsize
		\caption{Approximation of the relative risk of estimation $R(\widehat\kappa_n)= \E(\widehat \kappa_n-\kappa_n)/\kappa_n$ over 100 simulations for $T=10$, $\tau=T/2$, $n\in\{100,200,500,1000,1200\}$,  three different levels of sparsity $\rho_n$ and the quantiles $Q\in\{0.7,0.8,0.9\}$.}
		\label{tab:risk_kappa}
	\end{table}

In Fig.~\ref{fig:Scenario2_5:n100T100} we compare the performance of the test  adaptive to the unknown sparsity level with the test where we use the true value $\kappa_n$. We consider Scenario 1 with $n=100$, $T=100$ and  Scenario 4 with $n=100$, $T=250$.  For Scenario 1, $\kappa_n\approx 0.8n\rho_n$  and, for Scenario 4, $\kappa_n\approx 0.77 n\rho_n$. In our simulations  the change-point is located in the middle. For Scenario 1 with $T=100$, our estimator  $\widehat\kappa_n$ slightly overestimates the true value $\kappa_n=7.94$ with the average value $\widehat\kappa_n=12.9129$ calculated over 100 simulations and over all values of the parameter $\delta$. For Scenario 4  and $T=250$ we obtain a better  estimation that is equal to $\widehat\kappa_n=11.6858$ while the true value  $\kappa_n=7.26$. For Scenario 1, the tests adaptive to the unknown sparsity level behave quite well with a reasonable power and with the ENR that is about 1.25 times greater than the ENR of the corresponding test with known $\kappa_n$. For Scenario 4 the proportion between the ENRs is about 1.07 times. Better performances of the adaptive test for Scenario 4 is expected as in this case the change in the sparsity level of the network is less important than in the  Scenario 1. Our test construction is based on the upper bound for the sparsity level for all $t$ and naturally  gives better results in the case of the change which is more homogeneous in terms of sparsity.

\begin{figure}[htbp!]
	\centering
	\includegraphics[width=0.48\linewidth]{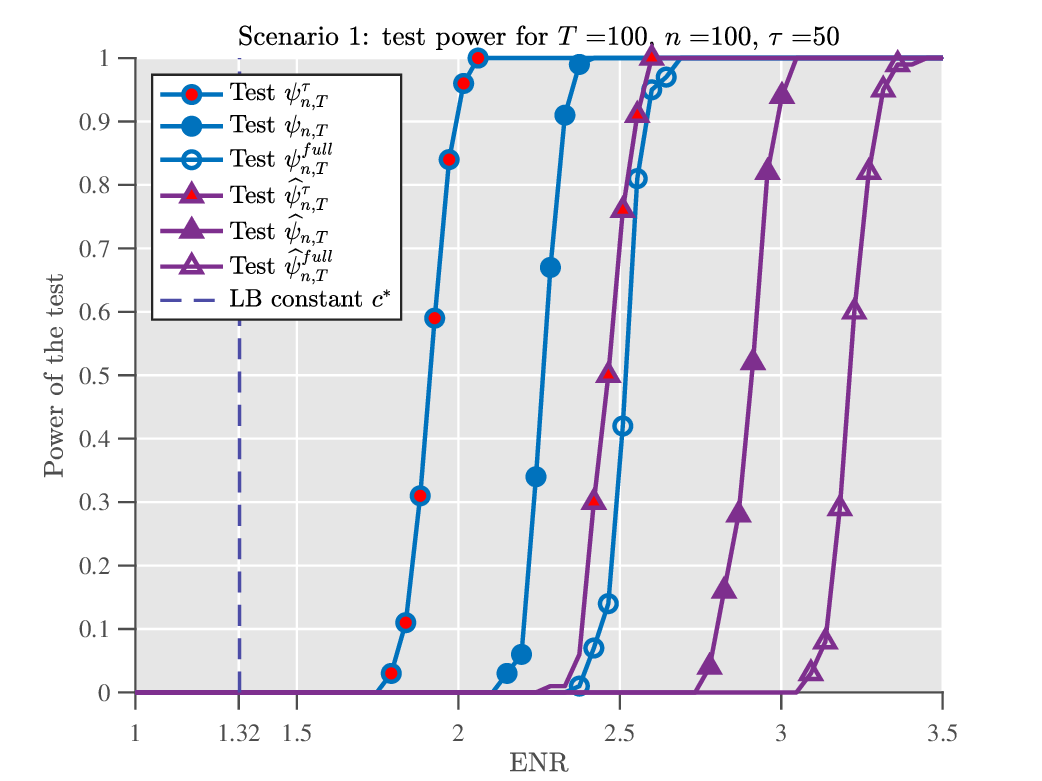}
	\includegraphics[width=0.48\linewidth]{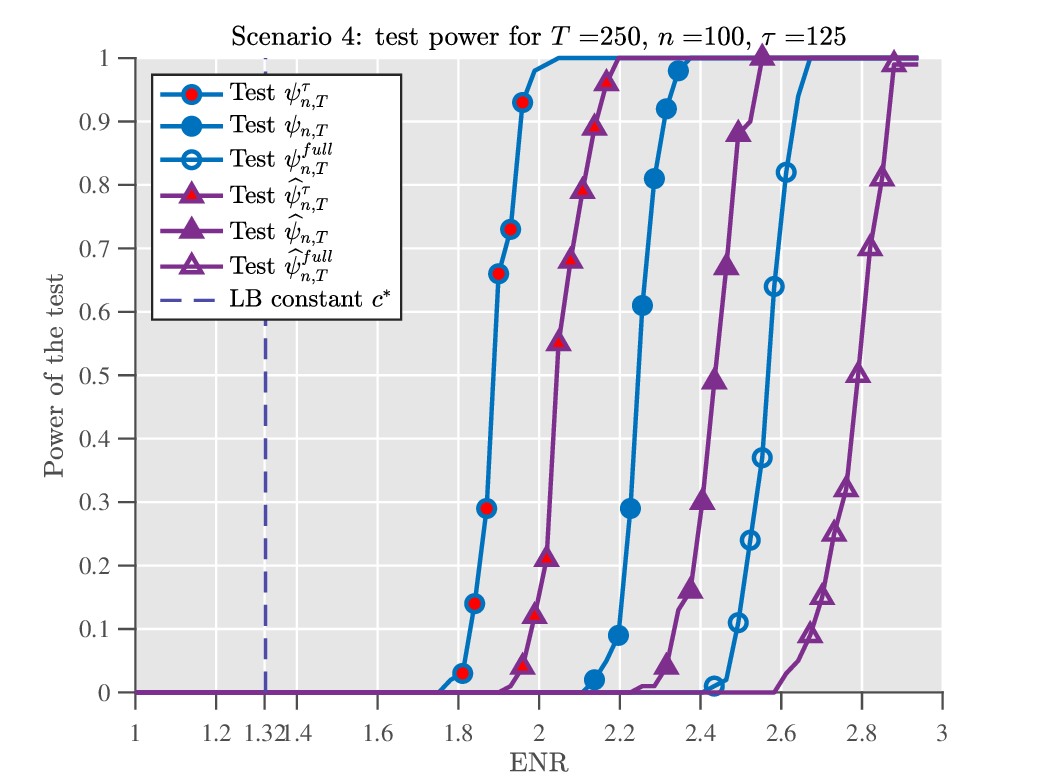}
	\caption{The test powers with known (tests $\psi$) and estimated sparsity parameters (tests $\widehat\psi$) for $n=100$, $\tau/T=0.5$ and $T=100$ for Scenario 1 (on the left) and $T=250$ for Scenario 4 (on the right).}
	\label{fig:Scenario2_5:n100T100}
\end{figure}


\subsubsection{Change-point localization in the case of uniform sampling}

We have simulated the networks of size $n=100$ from Scenario 1  with $T=100$ and a change-point located at $\tau\in\{5,25,50\}$.  The networks have missing links generated according to  the uniform sampling matrix $\Pi=p_n (\one_n\one_n^t-\id_n)$ with the sampling rate $p_n\in (0,1]$. We compute the average absolute error of our estimator $\widehat \tau$ defined in \eqref{eq:cp_estimator} over $N=100$ simulations normalized by the number of observations $T$: 
$R_N(\widehat\tau,\tau) =(NT)^{-1} \sum_{i=1}^N |\widehat\tau_i-\tau|$.
We present the dependence of this risk on the sampling rate $p_n$ and on the norm of the jump $\Delta\Theta^\tau$. 

In Fig.~\ref{fig:risk_SBMmodel2} we observe the dependence of the risk on the location $\tau$: the closer $\tau$ is to the middle of the interval, the easier the estimation is. The dependence of the rate of convergence of $\widehat\tau$ on the norm $\|\Pi\odot\Delta\Theta^\tau\|_{2\to2}=p_n\|\Delta\Theta^\tau\|_{2\to2}$ is represented by the level curve  $p_n\|\Delta\Theta^t\|_{2\to2}\approx \const$ separating the black area corresponding to a low change-point localization error from the light one with the higher error. 

\begin{figure}[htbp!]
	\centering
	\includegraphics[width=0.32\linewidth]{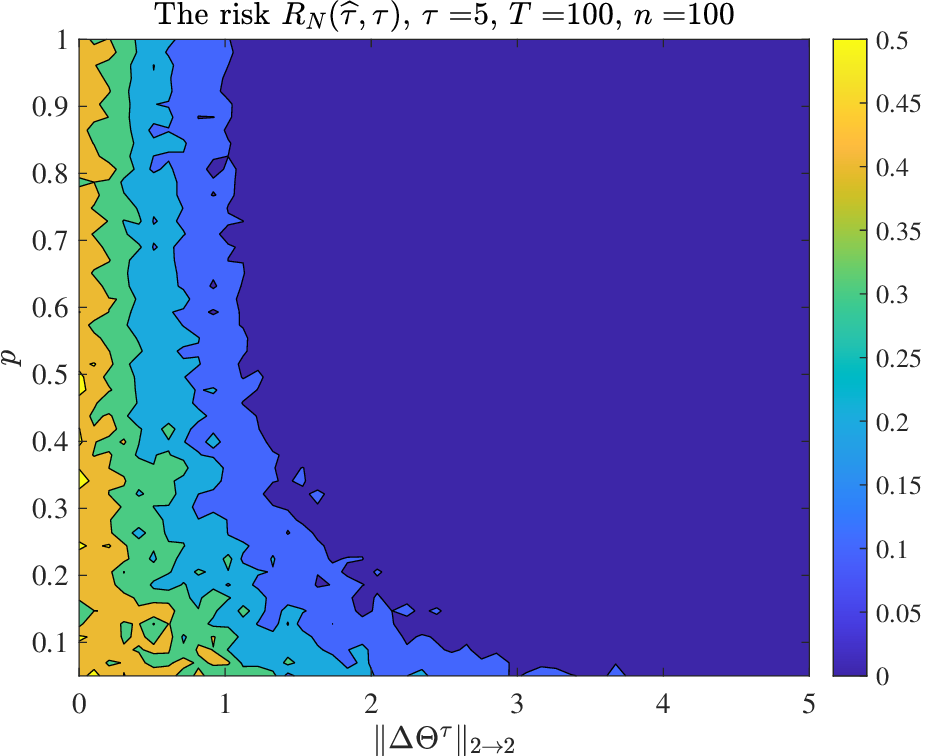}
	\includegraphics[width=0.32\linewidth]{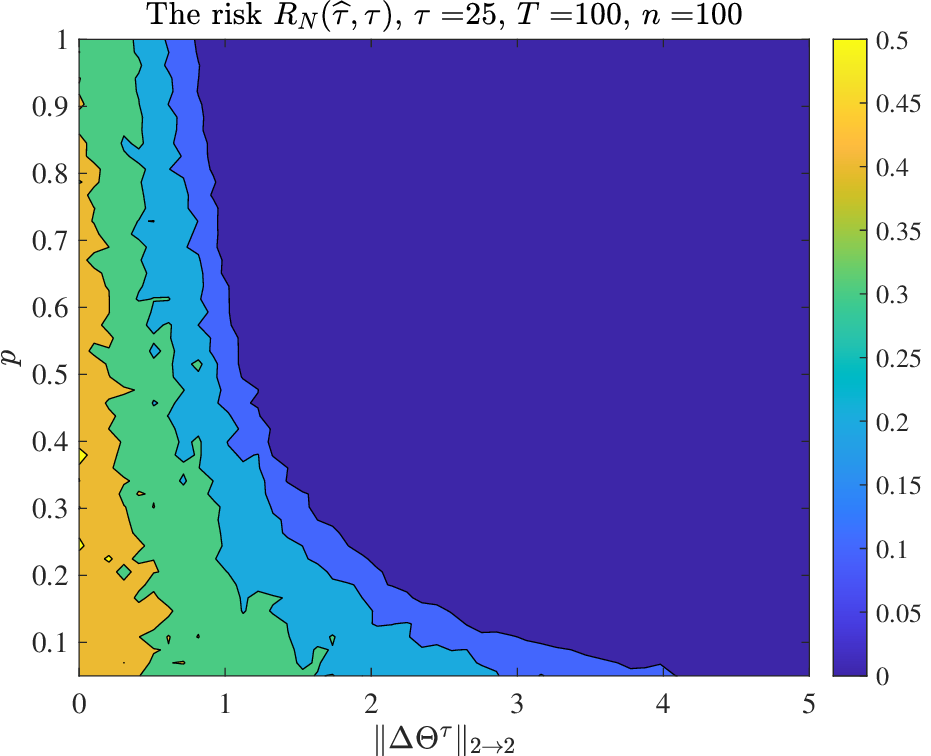}
	\includegraphics[width=0.32\linewidth]{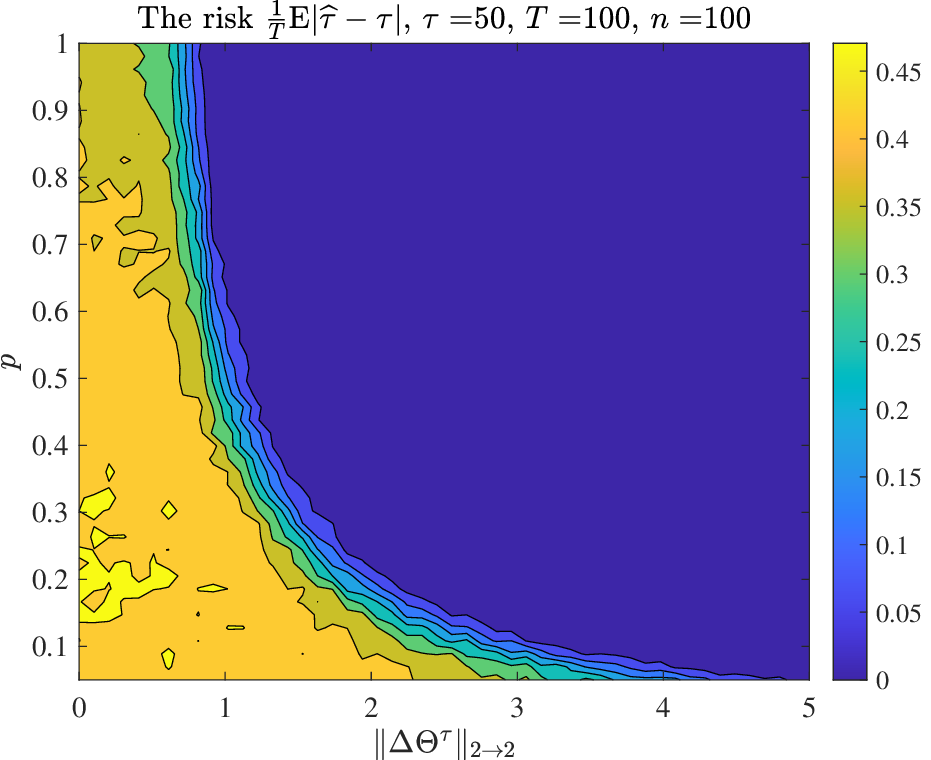}
	\caption{The risk of the change-point estimator under Scenario 1 for $T=100$, $n=100$ and the change-points $\tau\in\{5,25,50\}$ (left to right). The  links are observed at the constant sampling rate $p_n\in(0,1]$. The estimation is easier when the change-point is located in the middle (the graph to the right).}
	\label{fig:risk_SBMmodel2}
\end{figure}

\subsubsection{Change-point localization for  non-uniform sampling patterns}

We have simulated the networks of size $n=100$ with the change point in the middle of $T=100$ observations following three different sampling patterns:
\begin{description}
\item[\it Setting~A. Change-point in missing communication.]  The networks before and after the change follow Scenario~1.
The sampling matrix $\Pi=Z^T\tilde\Pi Z$ has the same community structure as the networks before and after the change and follows ``missing in communication" pattern: $\tilde \Pi = \begin{pmatrix} 1 & p\\ p& 1\end{pmatrix}$. This example was considered in Section~\ref{sec:misslinks}. 
 \item[\it Setting~B. Change-point in communication, within groups missing values]\ The networks before and after the change follow Scenario~1.
 The sampling matrix $\Pi=Z^T\tilde\Pi Z$ has the same community structure as the networks before and after the change and follows ``missing within groups" pattern: $\tilde \Pi = \begin{pmatrix} p & 1\\ 1& p\end{pmatrix}$. 
 \item[\it Setting~C. Change-point and missing links withing communities.] The networks before and after the change follow Scenario~5. 
The sampling matrix $\Pi=Z^T\tilde\Pi Z$ has the same community structure as the networks before and after the change and follows ``missing in communication" pattern: $\tilde \Pi = \begin{pmatrix} p & 1\\ 1& 1-p\end{pmatrix}$. 
\end{description} 
 In Fig.~\ref{fig:risk_SBMmodel2_miss} we present the results of the simulations that show the dependence of the risk of estimation on the sampling probability $p$ and the jump norm $\|\Delta\Theta\|_{2\to2}$. 
  Under Setting~A, the distortion parameter  is $\delta_n(\Pi,\Theta)=p/\sqrt 2$ and we see that the level curve is given by $p\|\Delta\Theta^\tau\|_{2\to 2}\approx\const$ since the links between the communities with a change-point are sampled at the uniform rate $p$.
 Under Setting~B, the missing links do not affect the change-point estimation since $\|\Pi\odot \Delta\Theta\|_{2\to 2}=\|\Delta\Theta\|_{2\to 2}$, the level curve is constant and the change-point estimation risk is independent of the link sampling.
Finally, under Setting~C, the distortion parameter is equal to $\delta_n(\Pi,\Theta)=(p\wedge 1-p)/\sqrt 2$ and we see a different level curve $(p\vee 1-p)\|\Delta\Theta^\tau\|_{2\to 2}\approx\const$ with the change in connection probabilities within blocks and links observed with different sampling probabilities $p$ and $1-p$. 
\begin{figure}[htbp!]
	\centering
	\includegraphics[width=0.32\linewidth]{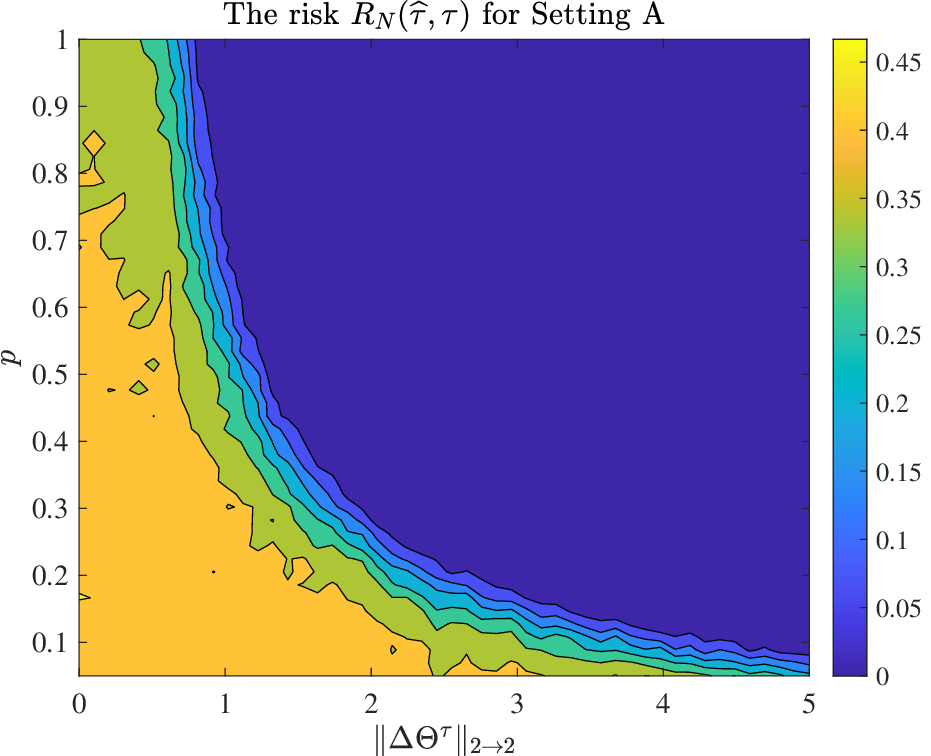}
	\includegraphics[width=0.32\linewidth]{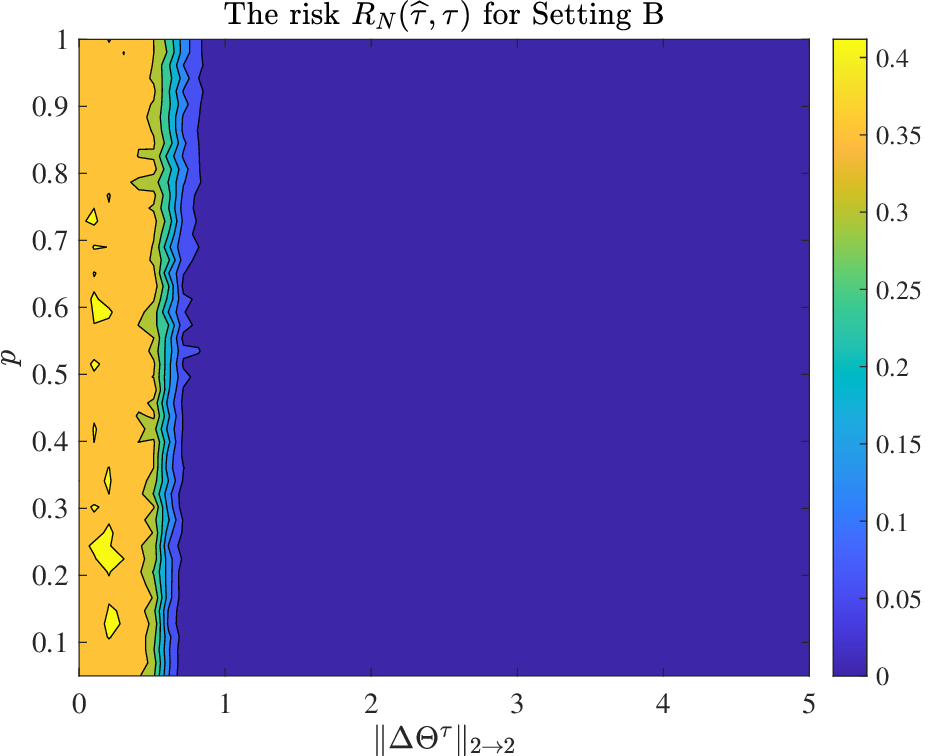}
	\includegraphics[width=0.32\linewidth]{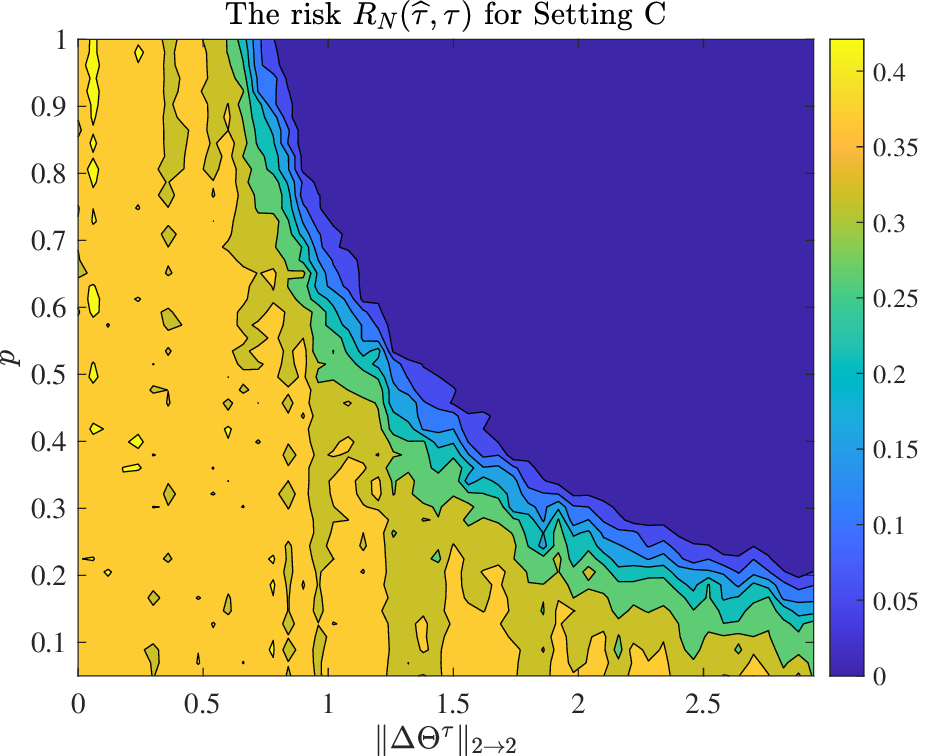}
	\caption{The risk of the change-point estimator under three different patterns of missing links for $T=100$, $n=100$ and $\tau=50$ (Setting~A--C, from left to right).}\label{fig:risk_SBMmodel2_miss}
\end{figure}

\subsubsection{Change-point detection under temporal dependence}
Following the reviewer's suggestion, we study the robustness of our testing procedure  to the temporal dependency in the observations. We observe a realization of the network $A=\{A^t,\ 1\le t\le T\}$ without missing links  and with temporal dependence for given nodes $i,j$. We suppose that  the process $A_{ij}^t$ is a Markov chain with values in $\{0,1\}$ and the stationary connection probabilities $\Theta_{ij}^0$ for $1\le t\le \tau$ and $\Theta_{ij}^1$ for $\tau<t\le T$.  The  Markov model for Bernoulli trials that we will use was proposed and studied  by~\cite{klotz1973}. Let $\lambda\in(0,1)$ be some given value and 
$$
\Pi_{ij}=\begin{pmatrix} 
	1-(1-\lambda)\frac{\Theta_{ij}}{1-\Theta_{ij}} & (1-\lambda)\frac{\Theta_{ij}}{1-\Theta_{ij}} \\
	1-\lambda & \lambda
\end{pmatrix}
$$
be the transition matrix of the Markov chain $(A_{ij}^t)$ for given $i$,$j$ with the connection probability $\Theta_{ij}=\Theta_{ij}^0\one_{\{1\le t\le\tau\}}+\Theta_{ij}^1\one_{\{\tau< t\le T\}}$.
Note that for $\lambda=\Theta_{ij}$ we have $\Pi_{ij}=\Theta_{ij}$ and the observations $A_{ij}^t$ are independent. 
We have $\P\Bigl\{A_{ij}^{t+1}=1 | A_{ij}^{t}=1\Bigr\}=\lambda$ and 
$$
\P\Bigl\{A_{ij}^{t+1}=1 | A_{ij}^t=0\Bigr\}= (1-\lambda)\frac{\Theta_{ij}}{1-\Theta_{ij}}.
$$
It can be easily seen that if $\lambda>\Theta_{ij}$ the probability to observe a link between $i$ and $j$ given the presence of  a link in the past is greater than $\Theta_{ij}$ and the probability of absence of link given the absent link in the past is greater than $1-\Theta_{ij}$. Thus, in this case we will have the observations $\{A_{ij}^t, 1\le t\le T\}$ of zeros and ones that form clusters. Vice versa, if $\lambda<\Theta_{ij}$, it will be less probable that the observations form clusters. The correlations between $X_t$ and $X_{t+h}$ are given by $\rho_{ij}(X_t,X_{t+h})=\Bigl(\frac{\lambda-\Theta_{ij}}{1-\Theta_{ij}}\Bigr)^{|h|}$, $h\in\mathbb Z$ and decrease exponentially with~$h$. It suggests that for small values of $\lambda$, when the dependence becomes weak, the testing procedure would give better results rather than for the values of $\lambda$  that are close to one. 

We will use the Matrix CUSUM test statistic for detection of a change-point at a given location for different degrees of dependency. In order to check the test behavior under the dependency, we will trace the ROC curves for $\lambda\in\{2^{-k},\ 1\le k\le 5\}$ and compare them to the ROC curve obtained for the independent case. We have performed 250 simulations of networks with and without a change following Scenarios 1 and 4 for $n=100$, $T=100$, and with the change-point located in the middle. Then, according to the values of the test statistic, we calculated the True Positive Rate (TPR), corresponding to the correctly detected alternative hypothesis of presence of a change and the False Positive Rate (FPR) that corresponds to incorrectly rejected null hypothesis. In Fig.~\ref{fig:ROC_dependent} the ROC curves are presented for the SBM networks following Scenario 1 and 4. 
It turns out that for $\lambda=1/2$, when the dependency is high, the test  behaves almost as a random guess. On the other hand, for small values of $\lambda=1/32,1/16$ (the blue curves),  the test statistic works better on the dependent data  than for the independent case. It is known that the testing procedure can benefit from the presence of dependency, see, for example, \citep{enikeeva2020} where the influence of  dependency on the change-point detection was studied in the case of Gaussian time series. Finally, we obseve that for $\lambda=1/8$ the test statistic have the same performance as in the independent case.

\begin{figure}[htbp!]
\centering
\includegraphics[width=0.48\linewidth]{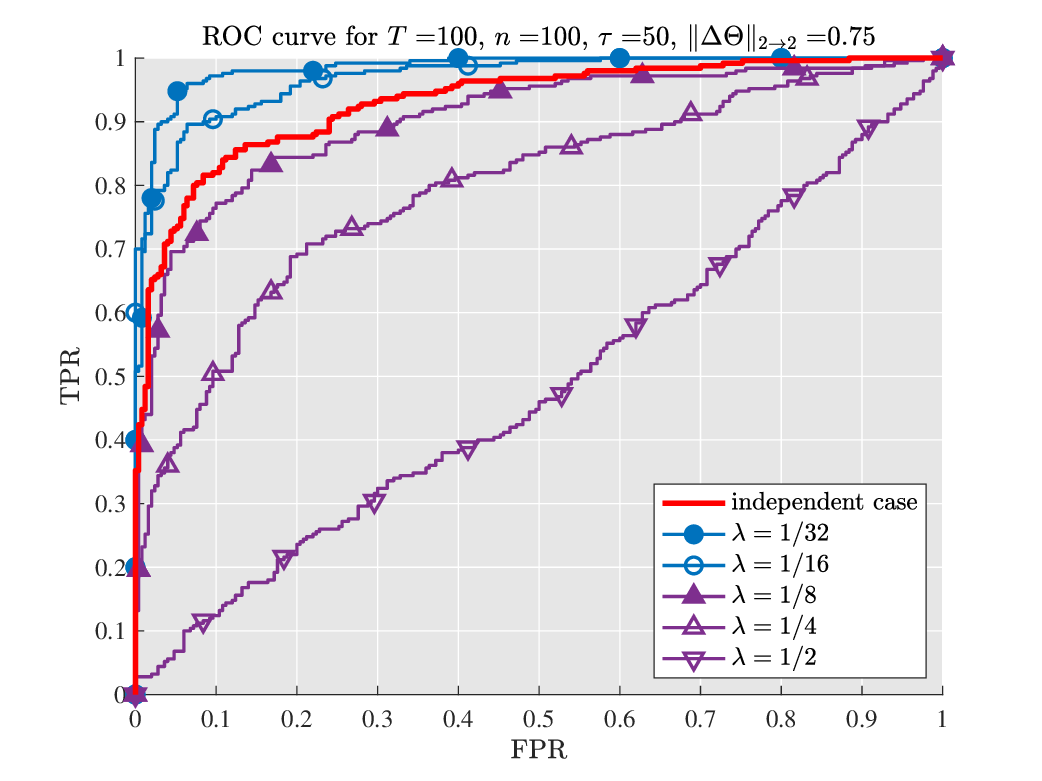}
\includegraphics[width=0.48\linewidth]{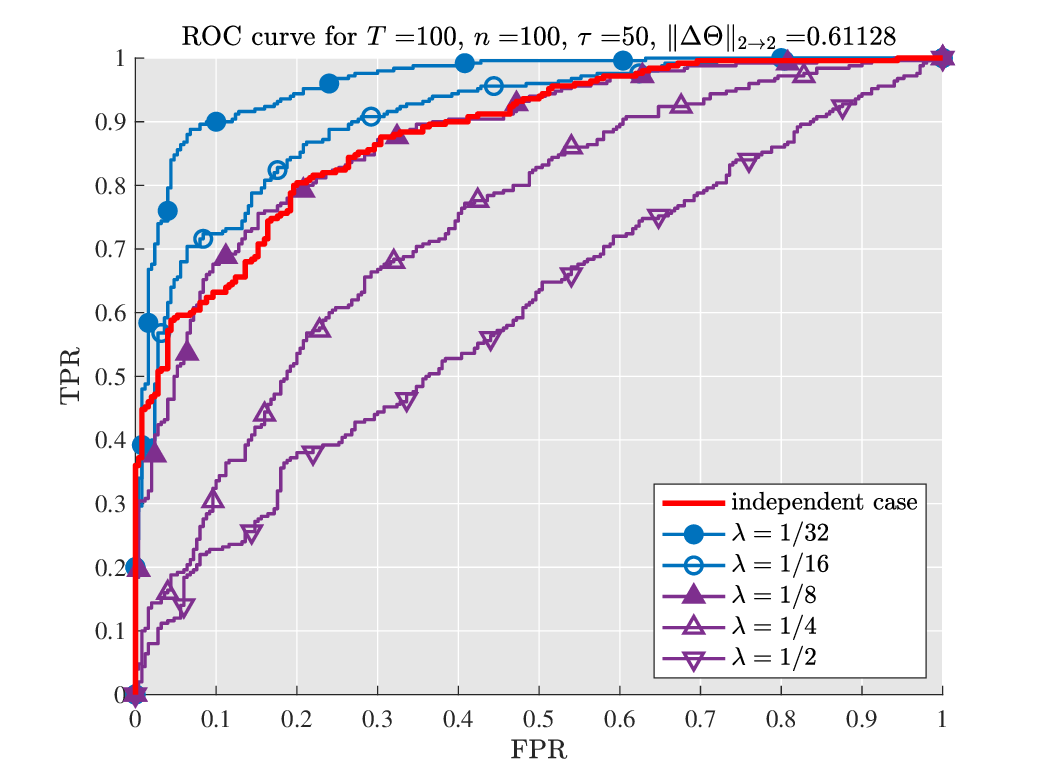}
\caption{The test power for the  Markov dependent networks with $n=100$ vertexes and $\lambda =0.6$ following Scenario 1 (on the left) for $T=100$, and Scenario 4 (on the right)  for $T=250$.}
\label{fig:ROC_dependent}
\end{figure}
We have also tested the influence of the dependency on the estimation procedure. We performed 100 simulations of networks following Scenarios 1, 4 and 5 for $T=100$, $n=100$ and the change-point in the middle. In Fig.~\ref{fig:est_dependent} we present  the empirical risk of the change-point estimator $\widehat\tau$ depending on the norm of the  jump $\|\Delta\Theta\|_{2\to2}$ and on  $\lambda\in (0,1)$.
\begin{figure}[htbp!]
	\centering
	\includegraphics[width=0.32\linewidth]{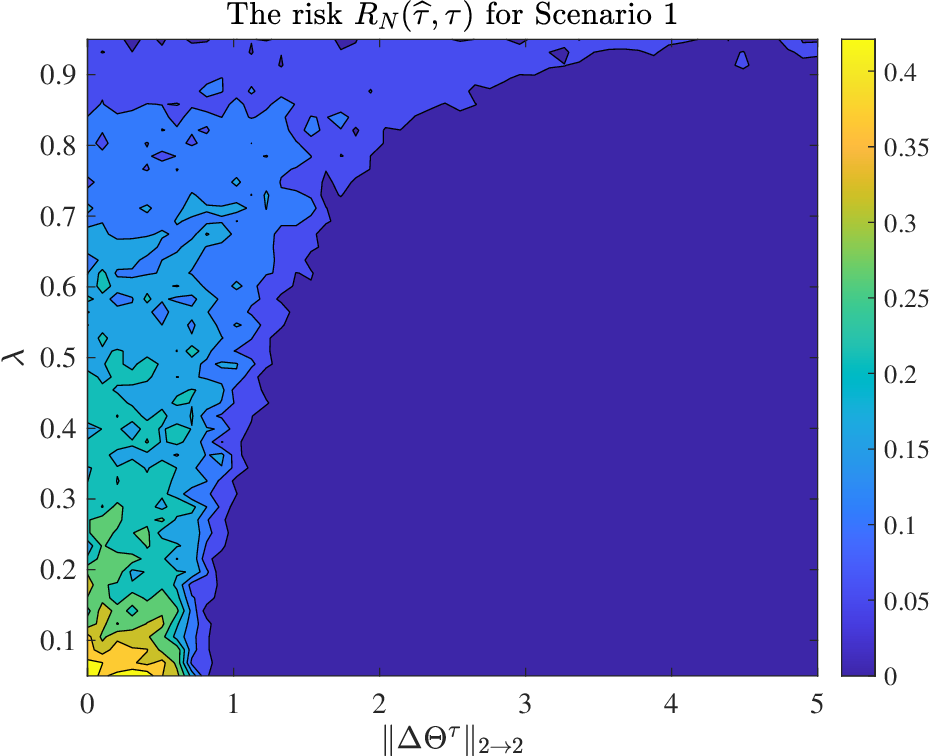}
	\includegraphics[width=0.32\linewidth]{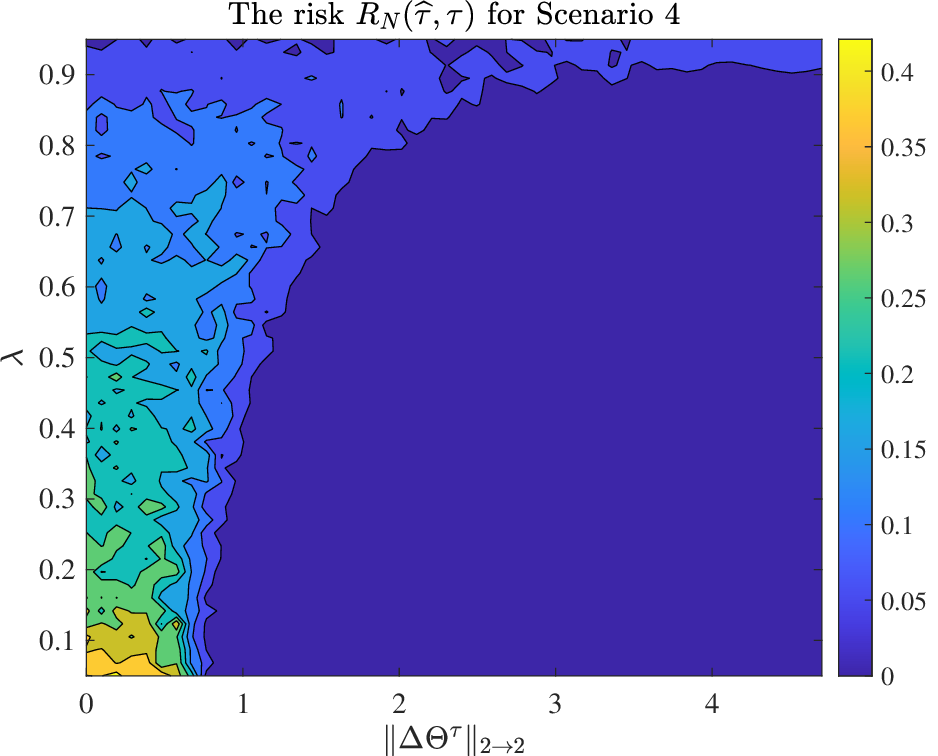}
	\includegraphics[width=0.32\linewidth]{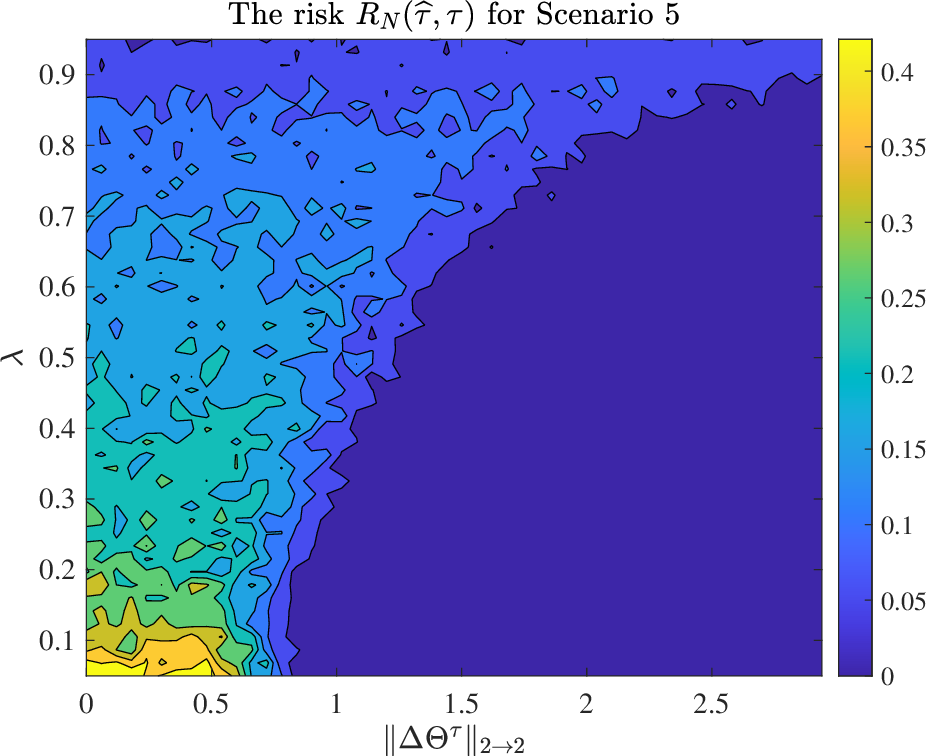}
	\caption{The risk of the change-point estimator  for $T=100$, $n=100$ and the change-point in the middle, $\tau=50$  for Scenarios 1, 4, and 5.}
	\label{fig:est_dependent}
\end{figure}
We see that the results for all scenarios show the same dependence of the risk on $\lambda$ and $\|\Delta\Theta\|_{2\to2}$.  In the detectability zone (deep blue and blue zones),  we see that the performance of the estimator increases if $\lambda$ decreases, as in the case of testing.  If we compare the result for Scenario 1 to the one for the independent case presented in Fig.~\ref{fig:risk_SBMmodel2_miss} in the middle (there is no influence of the missing links here), we see that beyond the detectability zone when $\|\Delta\Theta\|_{2\to2}<0.5$, the change-point estimation benefits from the dependency if $\lambda$ increases.  Overall, the Matrix CUSUM statistic shows a pretty good performance for this model of dependency both in terms of estimation and testing. 
\subsection{Results for graphon model}

In this section, we simulate a dynamic network  from graphon model with the graphon function in H\"older classes with
$$
W_1(x,y)=xy \quad\mbox{before the change and}\quad W_2^\gamma(x,y)=(xy)^\gamma \quad \text{after the change.}
$$
Here $\gamma\ge 1$ is the smoothness parameter that defines the impact of the change. 
We suppose that the assignment vector  $\eps$ does not change.
The sparsity parameter is set to  $\rho_n=1/\sqrt n$. 
The smoothness parameter $\gamma$ varies from 1 to 5, the change-point is at the middle of the interval, that is $\tau=T/2$, and the number of observations is $T=100$. The matrix size varies from 50 to 400. 

On the left hand side of Fig.~\ref{fig:graphon} we see  the dependence of the power of three tests $\psi_{n,T}^\tau$,  $\psi_{n,T}$, $\psi_{n,T}^{full}$ on the smoothness parameter $\gamma$. 
The results are similar to those obtained for the SBMs, the test over the dyadic grid outperforms the test over the whole grid and both are less powerful that testing at a given change-point $\tau$. The change in graphons with smoothness $\gamma>3$ can be detected with power close to 1. The graph on the right hand side of Fig.~\ref{fig:graphon} shows the power of the test $\psi_{n,T}^\tau$ for different sizes of networks $n\in\{50,100,200,300,400\}$ and for different values of the smoothness $\gamma$. We can see that the detection power grows with $n$ that confirms the detection rate $1/\sqrt{n\rho_nT}$. On the other hand, the smaller is the smoothness $\gamma$, the harder the detection will be. For example, if $\gamma=4$, the detection power is 1 starting from $n=100$ and for $\gamma=2$, the detection power becomes close to 1 only for $n= 400$. 

\begin{figure}[htbp!]
	\centering
	\includegraphics[width=0.48\linewidth]{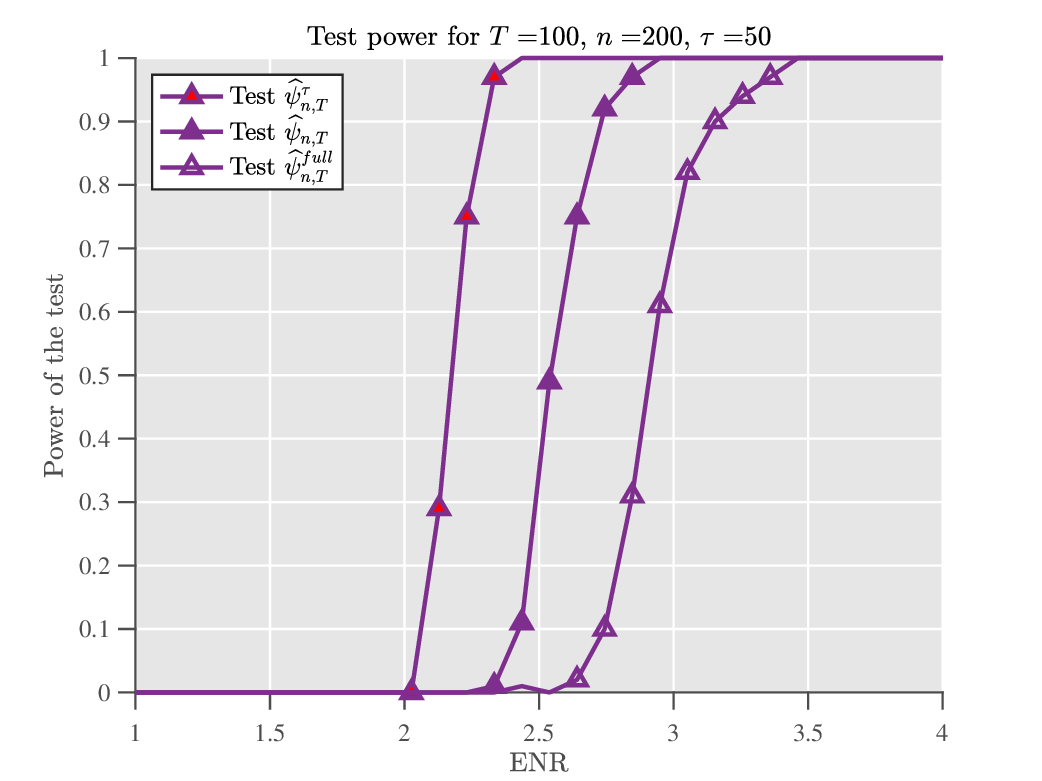}
	\includegraphics[width=0.48\linewidth]{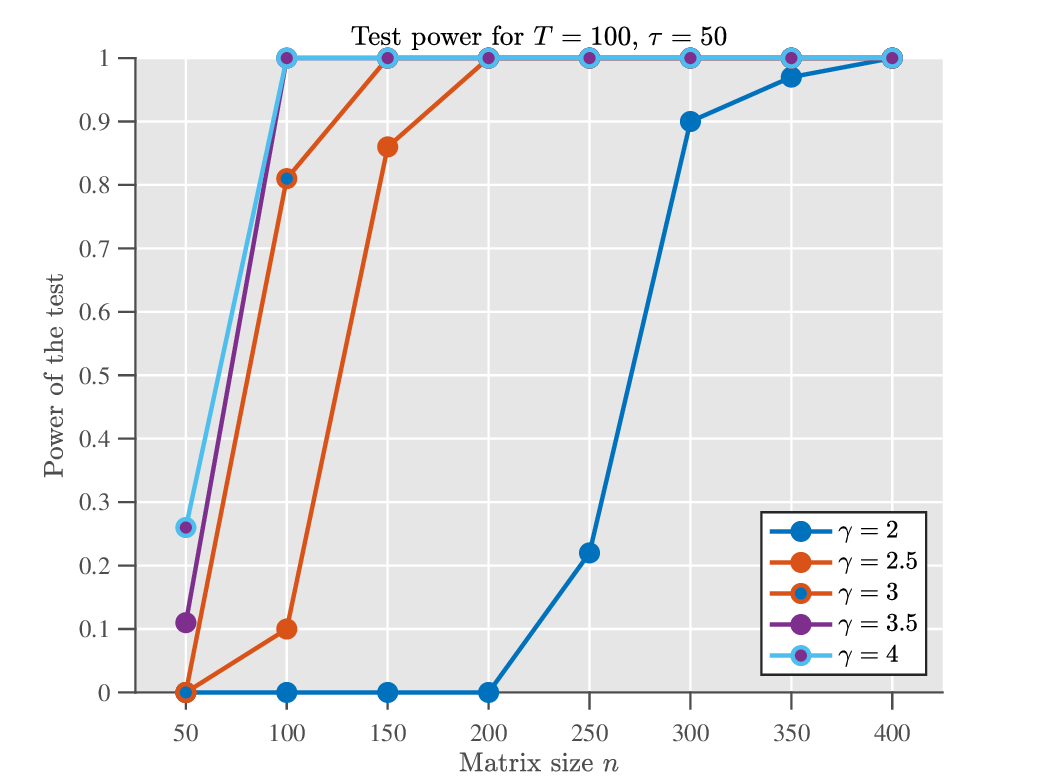}
	\caption{Power of  testing the change in the H\"older class graphons at $\tau=T/2$, $T=100$. The graph to the left displays the power for three different test for $n=200$. The graph to the right shows the power of the test $\psi_{n,T}^\tau$ for different values of $\gamma$ depending on the matrix size $n$. }
	\label{fig:graphon}
\end{figure}

\subsection{Transport for London (TfL) Open Data}\label{London}
In this section, we apply our test to the real data coming from the Transport for London (TfL) Open Data API\footnote{Acces to the data via \url{https://api.tfl.gov.uk}}. The data contains information about London Bicycle Sharing Network collected since 2012. The dataset contains  the following information: the ID of each bicycle, the ID and name of the origin and the destination trip stations, the journey (rental) starting and ending time and date, and the unique ID and the duration of each trip. 

We have analyzed the data during the two-month period from June 24, 2012 to August 31, 2012. The  summer of 2012 is a remarkable  period because of the Games of the XXX Olympiad that was held from July 27 to August 12, 2012 in London.
The dynamic network is a sequence of $T=69$ daily observations. Each observation is a graph with  $n=595$ vertices corresponding to the bike rental stations. We say that two vertices are connected if the minimal trip duration between the corresponding stations is not less than 3 minutes and the number of trips is greater than a predefined threshold. For each day, the threshold on the number of trips is equal to the 0.9975-level  empirical quantile of the distribution of the total  number of trips between every couple of stations excluding disconnected stations (zero trips during the day). The obtained network has the average  sparsity  $\bar\kappa_n=43.2319$ (over $T=69$ observations). The corresponding value of $\rho_n=\kappa_n/n=0.0727\asymp n^{-0.4}$. 

 Fig.~\ref{fig:cusumolympicsperiod} on the left shows the graph of the matrix CUSUM statistic calculated over the whole period from June 24, 2012 to August 31, 2012. We can see that the maximum of the statistic is attained at the position corresponding to July 22, 2012. This date corresponds to the day of the arrival of the Olympic Torch to London.\footnote{The details about the traffic perturbation in London on July 22, 2012 can be found at the TfL website: \url{https://tfl.gov.uk/info-for/media/press-releases/2012/july/olympic-torch-relay-has-arrived-in-london--plan-your-travel-and-get-ahead-of-the-games-tomorrow--sunday-22-july-2012}.}. 
Our test detects this change-point at the significance level $\alpha=0.05$ and our estimator correctly estimates it. The value of the test statistic is 53.3311, the corresponding threshold is equal to 40.5244.

\begin{figure}[htbp!]
	\centering
	\includegraphics[width=0.495\linewidth]{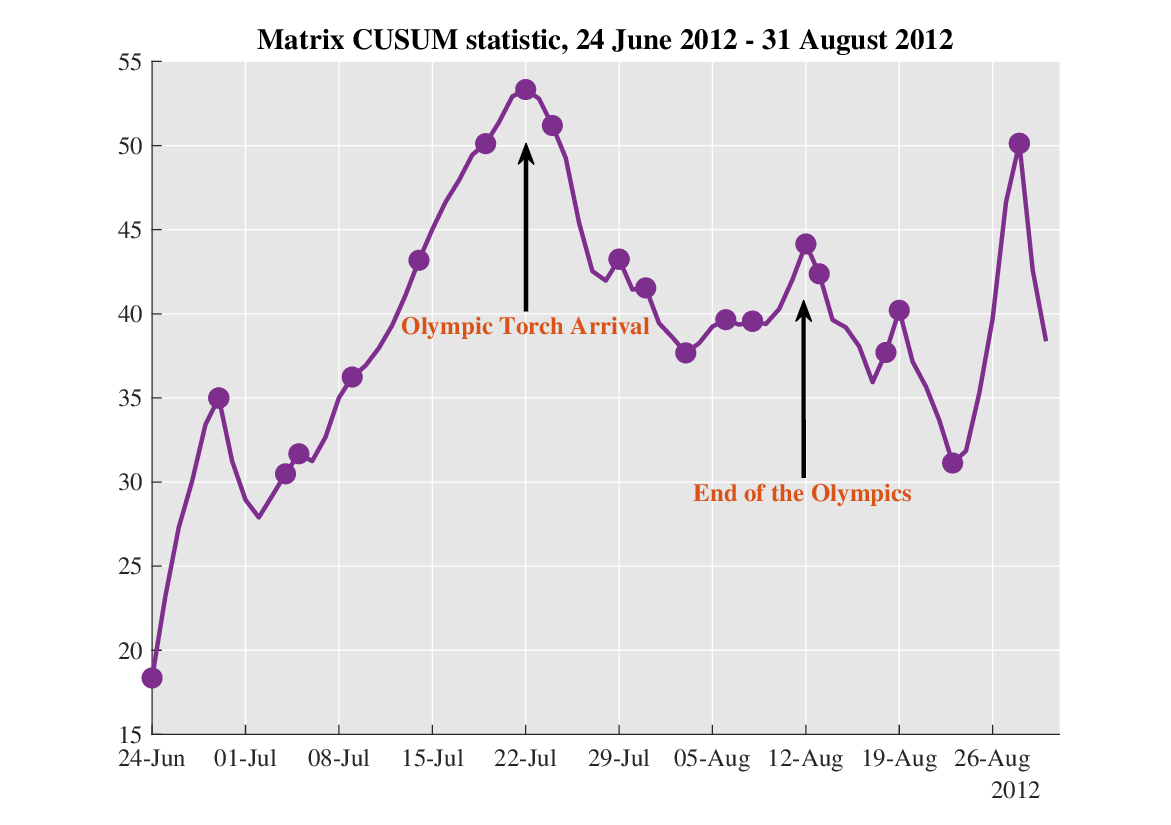}
	\hfill
	\includegraphics[width=0.495\linewidth]{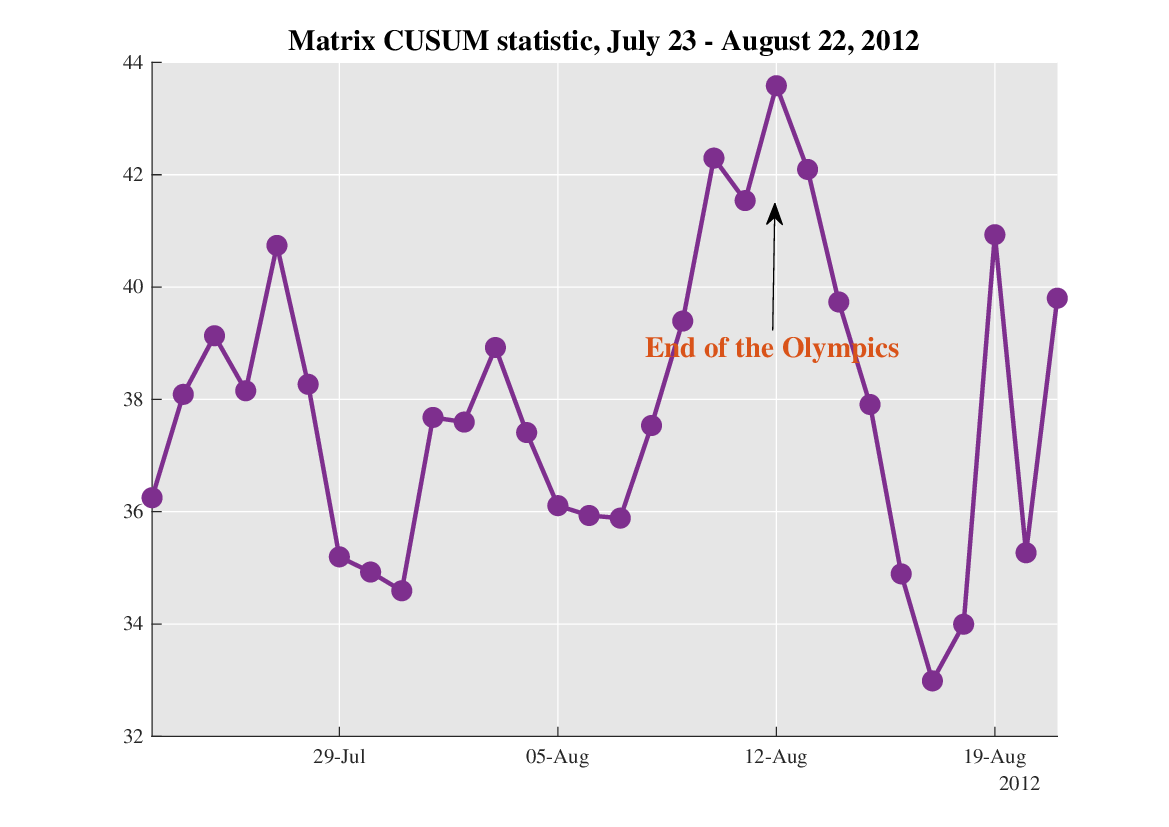}
	\caption{On the left: values of the matrix CUSUM statistic calculated during the whole period of observations. On the right: values of the matrix CUSUM statistic calculated during 31 days from July, 23 to August, 22.}
	\label{fig:cusumolympicsperiod}
\end{figure}

We see several peaks on the graph of the matrix CUSUM statistics which may imply that actually this data exhibits several change points.  One of them corresponds to the end of the Olympics on August 12, 2012. It is possible to combine our test with the segmentation methods for  multiple change-point localization (see, for example,  SMUCE \citep{fms14}, WBS \citep{Fryzlewicz:2014} or the method proposed recently in \citep{verzelen2020optimal}).

For example, if we take  the data covering the  period from July 23 to August, 22 (see Fig.~\ref{fig:cusumolympicsperiod}), 
 our test detects the change-point corresponding to the date of the London Olympics closing ceremony on August 12, 2012. Here we  observe the network during $T=31$ day and the values of the test statistic and the corresponding threshold at the level $\alpha=0.05$ are, respectively, 43.5854 and 41.7997. The estimator estimates correctly the change-point. 
%

\bibliographystyle{apalike}
\bibliography{references.bib}

\appendix
\section{Upper bound results}
In this section, we provide the Lemmas that control the type I and II errors of the tests for problems (P1) and (P2).

\subsection{Testing at a given point}

\begin{lemma}\label{lem:typeIerror_wo_log}
	For any $\alpha\in(0,1)$ the type I error of test~(\ref{eq:test:P1_wo_log}) is bounded by~$\alpha$.
\end{lemma}
\begin{proof} Recall that $Z_T(t)=-\mu_T(t)\Pi\odot \Delta\Theta^\tau + \xi(t)$, where 
	$$
	\xi(t)=\sqrt{\frac{t(T-t)}T} \left(\frac 1t \sum_{s=1}^t W^s -\frac1{T-t}\sum_{s=t+1}^T W^s\right)
	$$
	and that  under the null hypothesis $W^s=(W_{ij}^s)\in[-1,1]^{n\times n}$ are independent centered Bernoulli matrices with independent entries taking values in $\{1-\Pi_{ij}\Theta_{ij}^0,-\Pi_{ij}\Theta_{ij}^0\}$ with the success probability $\Pi_{ij}\Theta_{ij}^0$. 
	We have
	\begin{align*}
	\alpha(\psi_{n,T}^\tau)&=\sup_{(\Theta_0,\Pi)\in S_n(\omega_n)}\P_{(\Theta_0,\Pi)}\Bigl\{  \|Z_T(\tau)\|_{2\to2}>H_{\alpha,n}\Bigr\}\\ 
	&= \sup_{(\Theta_0,\Pi)\in S_n(\omega_n)}\P_{(\Theta_0,\Pi)}\Bigl\{\|\xi(\tau)\|_{2\rightarrow 2}>2\sqrt 2 (1+\epsilon) \sqrt \omega_n + C_\epsilon \log\Bigl(\frac {2n} \alpha\Bigr)\Bigr\}.
	\end{align*}
	Using the bound from  Lemma \ref{lem:matrixBernstein} with $\Vert \Pi\odot \Theta^0\Vert_{1,\infty}\leq \omega_n$ we get $\alpha(\psi_{n,T})\leq \alpha$.
\end{proof}
\begin{lemma}\label{lem:typeIIerror_wo_log}
	Let $\alpha,\beta\in(0,1)$ and $H_{\alpha,n}$ be given by \eqref{eq:H_P1_wo_log}. Suppose that 
	\begin{equation}\label{eq:rate_nonasymp}
	\mathcal R_{n,\tau} \ge 4\sqrt 2(1+\epsilon) \left(\frac{\kappa_n}{T}\right)^{1/2} +\frac {C_\epsilon}{\sqrt T}  \left(\log \Bigl(\frac{2n}\alpha\Bigr)+ \log\Bigl(\frac{2n}\beta\Bigr)\right)
	\end{equation}
	Then the type II error of test~(\ref{eq:test:P1_wo_log}) is bounded by $\beta$.
\end{lemma}

\begin{proof}
	For ease of notation we denote 
	$$
	\left (\bTheta,\Delta\bTheta^{\tau}\right )=\left \{ (\Theta+\Delta\Theta^\tau,\Pi ),(\Theta,\Pi ) \right \}.
	$$
	By definition of $\beta(\psi_{n,T},\mathcal R_{n,\cpset_T})$ we have
	\[
	\beta(\psi_{n,T}^\tau,\mathcal R_{n,\tau})
=\sup_{\left (\bTheta,\Delta\bTheta^{\tau}\right )\in\mathcal W_{n,T}^\tau(\omega_n, \mathcal R_{n, \cpset_T}) } \P_{\left (\bTheta,\Delta\bTheta^{\tau}\right )}\left \{\Vert Z_{T}(
	\tau) \Vert_{2\rightarrow 2}\leq H_{\alpha,n}\right \}. 
	\]
	Using the triangle inequality,  \eqref{eq:rate_nonasymp},  the choice of $H_{\alpha,n}$ and the fact that $\mu_T(\tau)=\sqrt Tq(\tau/T)$ we compute
	\begin{align*}
	\beta(\psi_{n,T}^\tau,\mathcal R_{n,\tau})&\leq \sup_{\left (\bTheta,\Delta\bTheta^{\tau}\right )\in\mathcal W_{n,T}^\tau(\omega_n, R_{n, \cpset_T}) } \P_{\left (\bTheta,\Delta\bTheta^{\tau}\right )}\Bigl\{\Vert \xi(\tau)\Vert_{2\rightarrow 2}> \mu_{T}(\tau) \Vert  \Delta \Theta^\tau\Vert_{2\rightarrow 2}-H_{\alpha,n}\Bigr\}\\
	&\leq \sup_{\left (\bTheta,\Delta\bTheta^{\tau}\right )\in\mathcal W_{n,T}^\tau(\omega_n, \mathcal R_{n, \cpset_T}) } \P_{\left (\bTheta,\Delta\bTheta^{\tau}\right )}\Bigl\{\Vert \xi(\tau)\Vert_{2\rightarrow 2}>\sqrt T R_{n,\tau}-2\sqrt 2 (1+\epsilon) \omega_n^{1/2} - C_\epsilon \log\left(\frac {2n} \alpha\right) \Bigr\}\\
	&
	\leq \sup_{\left (\bTheta,\Delta\bTheta^{\tau}\right )\in\mathcal W_{n,T}^\tau(\omega_n, \mathcal R_{n, \cpset_T}) } \P_{\left (\bTheta,\Delta\bTheta^{\tau}\right )}\Bigl\{\| \xi(\tau)\|_{2\rightarrow 2}>2\sqrt 2(1+\epsilon)\omega_n^{1/2} +C_\epsilon \log\left(\frac{2n}\beta\right) \Bigr\}.
	\end{align*}
	Applying  Lemma \ref{lem:matrixBernstein}  and using $\Vert \Pi\odot \Theta^0\Vert_{1,\infty}\leq \omega_n$, $\Vert \Pi\odot (\Theta^0+\Delta\Theta^\tau)\Vert_{1,\infty}\leq \omega_n $ we  obtain the statement of the lemma.
\end{proof}

\subsection{Testing at an unknown change-point}



\begin{lemma}\label{lem:typeIerrorMissing_wolog}
	For any $\alpha\in(0,1)$ the type I error of test~(\ref{test_unknowntau_wolog}) is  less than~$\alpha$.
\end{lemma}
\begin{proof}
	Using the union bound we can bound the type I error as follows
	\begin{align*}
	\alpha(\psi_{n,T})& =\P_{\Hyp_0}\Bigl\{\max_{t\in\mathcal T}\|Z_T^Y(t)\|_{2\to2} >H_{\alpha,n,T}\Bigr\}\\
	&\le \sum_{t\in\mathcal T} \sup_{(\Theta^0,\Pi)\in\mathcal S_n(\omega_n)} \P_{(\Theta^0,\Pi)} \Bigl\{\|Z_T^Y(t)\|_{2\to2} >H_{\alpha,n,T}\Bigr\}\\
	&= \sum_{t\in\mathcal T}	\sup_{(\Theta^0,\Pi)\in\mathcal S_n(\omega_n)} \P_{(\Theta^0,\Pi)} \Bigl\{\| \xi(t)\|_{2\rightarrow 2}>2\sqrt 2 (1+\epsilon) \omega_n^{1/2} + C_\epsilon \log \left(\frac{4n \log_2(T)}{\alpha}\right)\Bigr\}.
	\end{align*}
	
	Using Lemma~\ref{lem:matrixBernstein} with $\Vert \Pi_n\odot\Theta^0\Vert_{1,\infty}\leq \omega_n$, we immediately get for every $t\in\mathcal T$
	$$
	\sup_{(\Theta^0,\Pi)\in\mathcal S_n(\omega_n)} \P_{(\Theta^0,\Pi)} \Bigl\{\| \xi(t)\|_{2\rightarrow 2}>2\sqrt 2 (1+\epsilon) \omega_n^{1/2} + C_\epsilon \log \left(\frac{4n \log_2(T)}{\alpha}\right)\Bigr\} \le \frac{\alpha}{2\log_2(T)}.
	$$
	Using the fact that $|\mathcal T|\le 2\log_2 (T)$, we obtain $\alpha (\psi_{n,T})\le\alpha$.
\end{proof}
\begin{lemma}\label{lem:typeIIerror_missing_wolog}
	Let $\alpha,\beta\in(0,1)$ and $ H_{\alpha,n,T}$ be given by \eqref{threshold_unknowntau_wolog}.  Suppose that for some $\epsilon\in(0,1/2]$
$$
	\mathcal R_{n, \cpset_T} \ge  4\sqrt 6 (1+\epsilon) \left(\frac{\omega_n}{ T}\right)^{1/2}+\frac {\sqrt 3 C_\epsilon}{\sqrt T} \left(\log \frac{4n\log_2(T)}\alpha+ \log \frac{2n}\beta\right).
$$
	Then, the type II error of the test $\psi_{n,T}$ is bounded by $\beta$.
\end{lemma}

\begin{proof}
	For ease of notation we denote 
	$$
	\left (\bTheta,\Delta\bTheta^{\tau}\right )=\left \{ (\Theta+\Delta\Theta^\tau,\Pi ),(\Theta,\Pi ) \right \}.
	$$
	By definition of $\beta(\psi_{n,T},\mathcal R_{n,\cpset_T})$ we have
	\begin{align*}
	\beta(\psi_{n,T},\mathcal R_{n, \cpset_T})
	&=\sup_{\tau\in\cpset_T}\sup_{\left (\bTheta,\Delta\bTheta^{\tau}\right )\in\mathcal W_{n,T}^\tau(\omega_n, \mathcal R_{n, \cpset_T}) } \P_{\left (\bTheta,\Delta\bTheta^{\tau}\right )}
	\left\{\max_{t\in\mathcal T} \|Z_T^Y(t)\|_{2\to 2} \le H_{\alpha,n,T}\right \}\\
	&\le   \inf_{t\in\mathcal T} \sup_{\tau\in\cpset_T}\sup_{\left (\bTheta,\Delta\bTheta^{\tau}\right )\in\mathcal W_{n,T}^\tau(\omega_n, \mathcal R_{n, \cpset_T}) }\P_{\left (\bTheta,\Delta\bTheta^{\tau}\right )}\Bigl \{\|Z_{T}^Y(t) \|_{2\rightarrow 2}\leq  H_{\alpha,n,T}  \Bigr \}\\
	& \le \inf_{t\in\mathcal T}  \sup_{\tau\in\cpset_T}\sup_{\left (\bTheta,\Delta\bTheta^{\tau}\right )\in\mathcal W_{n,T}^\tau(\omega_n, \mathcal R_{n, \cpset_T}) }\P_{\left (\bTheta,\Delta\bTheta^{\tau}\right )}\left \{\| \xi(t)\|_{2\rightarrow 2}> \mu_T^\tau(t) \Vert  \Pi\odot\Delta \Theta ^\tau\Vert_{2\rightarrow 2}
	- H_{\alpha,n,T}\right \}.
	\end{align*}
	If $\tau\le T/2$,  there exists a $t^*\in \mathcal T^L$ such that $\tau/2\le  t^*< \tau$. It is easy to see that 
	$$
	\mu_T^\tau(t^*)=\sqrt{\frac{t^*(T-\tau)}{(T-t^*)\tau}}\sqrt{\frac{\tau(T-\tau)}{T}}\ge \sqrt{\frac{\tau/2(T-\tau)}{(T-\tau/2)\tau}} \sqrt{\frac{\tau(T-\tau)}T}\ge \frac 1{\sqrt 3} \sqrt Tq(\tau/T),
	$$
	since $\tau< T/2$ iff $(T-\tau)/(2T-\tau)> 1/3$. If $\tau\ge T/2$, noting that $\mu_T^\tau(t)=\mu_T^{T-\tau}(T-t)$, we can reduce the estimation of $\mu_T^\tau(t)$ to the previous case: there exists $T-t'\in \mathcal T^R$ such that $(T-\tau)/2<T-t'<T-\tau$ and 
	$\mu_T^\tau(t')=\mu_T^{T-\tau}(T-t')\ge \frac1{\sqrt 3}\sqrt Tq(\tau/T)$.
	
	Thus, for any $\tau\in\cpset_T$ we have the following bound of the type II error
	\begin{align*}
	\sup_{\left (\bTheta,\Delta\bTheta^{\tau}\right )\in\mathcal W_{n,T}^\tau(\omega_n, \mathcal R_{n, \cpset_T}) }&\P_{\left (\bTheta,\Delta\bTheta^{\tau}\right )}\Bigl\{\| \xi(t^*)\|_{2\rightarrow 2}>\mu_T^\tau(t) \Vert  \Pi\odot\Delta \Theta ^\tau\Vert_{2\rightarrow 2}
	- H_{\alpha,n,T}\Bigr\}\\
	&\hskip -2cm\le \sup_{\left (\bTheta,\Delta\bTheta^{\tau}\right )\in\mathcal W_{n,T}^\tau(\omega_n, \mathcal R_{n, \cpset_T}) }\P_{\left (\bTheta,\Delta\bTheta^{\tau}\right )}\Bigl\{\| \xi(t^*)\|_{2\rightarrow 2}>\frac 1{\sqrt 3} \sqrt Tq(\tau/T) \Vert  \Pi\odot\Delta \Theta ^\tau\Vert_{2\rightarrow 2}
	- H_{\alpha,n,T}\Bigr\}\\
	&\hskip -2cm\le \sup_{\left (\bTheta,\Delta\bTheta^{\tau}\right )\in\mathcal W_{n,T}^\tau(\omega_n, \mathcal R_{n, \cpset_T}) }\P_{\left (\bTheta,\Delta\bTheta^{\tau}\right )}\Bigl\{\| \xi(t^*)\|_{2\rightarrow 2}>\frac 1{\sqrt 3} \sqrt T\mathcal R_{n,\cpset_T}
	- H_{\alpha,n,T}\Bigr\}\\	
	&\hskip -2cm\le \sup_{\left (\bTheta,\Delta\bTheta^{\tau}\right )\in\mathcal W_{n,T}^\tau(\omega_n, \mathcal R_{n, \cpset_T}) }\P_{\left (\bTheta,\Delta\bTheta^{\tau}\right )}\Bigl\{\| \xi(t^*)\|_{2\rightarrow 2}> 2\sqrt 2(1+\epsilon)\omega_n^{1/2}+C_\epsilon\log\frac{2n}\beta\Bigr\} \le \beta.
	\end{align*}
	The first inequality follows from the fact that $\mu_T^\tau(t)\ge \sqrt(T/3)q(\tau/T)$, the second inequality is a general fact, the third one follows from the definition of  $\mathcal R_{n,\cpset_T}$ and in  the last one we use  Lemma \ref{lem:matrixBernstein} together with  $\Vert \Pi\odot\Theta\Vert_{1,\infty}\leq \omega_n$ and  $\Vert \Pi\odot\left (\Theta+\Delta\Theta\right )\Vert_{1,\infty}\leq \omega_n$.
\end{proof}

\section{Lower bound results}

\subsection{General idea of the lower bound construction}

Let $Y=(Y_1,\dots,Y_T)$ be the observations of the dynamic network following the inhomogeneous random graph model  defined on a probability space $(\Omega,\mathcal A,\P)$. Using \eqref{eq:lbTV}, we can see that a lower bound on the type II error can be obtained by bounding from above the total variation distance between the measures of $Y$ under the null and the alternantive hypotheses. The total variation distance is usually hard to bound and we can use instead the chi-squared or the Kullback--Leibler divergences  as $\|\P_1-\P_0\|_{\mathrm{TV}}\le \sqrt{\chi^2(\P_1,\P_0)}$ and $\|\P_1-\P_0\|_{\mathrm{TV}}\le \sqrt{2\mathrm{KL}(\P_1,\P_0)}$. Thus, the problem of bounding the TV-distance is reduced to the problem of bounding one of these two divergences. 
This is can be done using  {\it the second moment method} or {\it the fuzzy hypotheses method} as follows.

Let $\pi_{n,0}$ and $\pi_{n,1}$ be some prior distributions on the set of parameters $(\Theta,\Pi)$ and $(\Theta+\Delta\Theta^\tau,\Pi)$ of the network under $\Hyp_0$ and $\Hyp_1$, respectively.  Define the mixture distributions 
$\pp_{n,0}^T(Y)=\E_{\pi_{n,0}^T} \P(Y)$ and  $\pp_{n,1}^T(Y)=\E_{\pi_{n,1}^T} \P(Y)$, where $\P$ is the probability measure of $Y$. The expectations w.r.t. to the measures $\pp_{n,i}^T$ are denoted by $\E_{n,i}^T$, $i=0,1$.
The following bounds hold true (see, for example, \citep{Ingster&Suslina:2003}):
\begin{align}
\inf_{\psi^\tau_{n,T}\in \Psi_\alpha}\beta (\psi^\tau_{n,T},\mathcal R_{n,\tau}) 
&= \inf_{\psi_{n,T}^\tau\in \Psi_\alpha} \sup_{(\Theta^\tau,\Theta^{\tau+1})\in \mathcal W_{n,T}^\tau(\omega_n,\mathcal R_{n,\tau})} \P_{(\Theta^\tau,\Theta^{\tau+1})} \Bigl\{\psi_{n,T}^\tau=0\Bigr\}\nonumber\\
&\ge 1-\frac12 \| \pp_{n,1}^T-\pp_{n,0}^T\|_{\mathrm{TV}}-\alpha\label{eq:lb_general}\\
&\ge  1-\frac 12 \sqrt{\chi^2(\pp_{n,1}^T,\pp_{n,0}^T)}-\alpha\nonumber\\
&= 1-\frac12\left(\E_{n,0}^T\left[ \frac{d\pp_{n,1}^T}{d\pp_{n,0}^T}(Y)\right]^2-1\right)^{1/2}-\alpha.\nonumber
\end{align}

Let $\alpha\in(0,1)$ and $\beta\in (0,1-\alpha]$. Set $\eta=\alpha+\beta$. To establish a non-asymptotic  lower  bound $\inf\limits_{\psi_{n,T}^\tau\in \Psi_\alpha}\beta (\psi_{n,T},\mathcal R_{n,\tau}) \ge\beta$ and the corresponding $(\alpha,\beta)$-minimax detection rate, we need to find the conditions on $\mathcal R_{n,\tau}$ such that
$$
\E_{n,0}^T\left[ \frac{d\pp_{n,1}^T}{d\pp_{n,0}^T}(Y)\right]^2 \le 1+4(1-\alpha-\beta)^2=1+4(1-\eta)^2.
$$
 In the case of problem (P2) of unknown change-point $\tau\in\cpset_T$, bounding the type II error     can be reduced to the case of a given  change-point location provided in~\eqref{eq:lb_general}:
\begin{align}
	\inf_{\psi_{n,T}\in \Psi_\alpha}&\beta (\psi_{n,T},\mathcal R_{n,\cpset_T}) \nonumber\\
	&= \inf_{\psi_{n,T}\in \Psi_\alpha} \sup_{\tau\in\cpset_T}\sup_{(\Theta^0+\Delta\Theta^\tau,\Pi),(\Theta^0,\Pi)\in \mathcal W_{n,T}^\tau(\omega_n,\mathcal R_{n,\cpset_T})} \P_{(\Theta^0+\Delta\Theta^\tau,\Pi),(\Theta^0,\Pi)} \Bigl\{\psi_{n,T}=0\Bigr\}\nonumber\\
	&\ge  \inf_{\psi_{n,T}\in \Psi_\alpha} \sup_{(\Theta^0+\Delta\Theta^{\tau^*},\Pi),(\Theta^0,\Pi)\in \mathcal W_{n,T}^{\tau^*}(\omega_n,\mathcal R_{n,\cpset_T})} \P_{(\Theta^0+\Delta\Theta^{\tau^*},\Theta^0)} \Bigl\{\psi_{n,T}=0\Bigr\},\label{eq:lbineq_tau_unknown}
\end{align}
where $\tau^*\in\cpset_T$ is any possible change-point from the set of alternatives. Thus, we can reduce the construction of the lower bound for the case of an unknown change-point to the case of a given change-point $\tau^*$. 

\subsection{Auxiliary lemma}\label{sec:aux_lb}
Let  $\rho_n\in(0,1/2]$ and $q\in[-1,1]$. Denote by $\pp_0$ and $\pp_q$ the Bernoulli measures with the parameters $\rho_n$ and $\rho_n(1+q)$ with the corresponding densities $d\pp_0$ and $d\pp_q$ with respect to some dominating measure $\lambda$.   
The following simple formulas will be useful in the proof of the lower bound.
\begin{lemma}\label{lemma:means}
	Let $\rho_n\in(0,1/2]$, $q,q_1,q_2\in[-1,1]$. The following relations hold true for a Bernoulli variable $X\sim \pp_0$:
	\begin{gather*}
	\E_0\left[\frac{d\pp_{q}}{d\pp_0}\right]^2(X)=1+\frac{\rho_n}{1-\rho_n} q^2,\\
	\E_0\left[\frac{d\pp_{q_1}}{d\pp_0} \frac{d\pp_{q_2}}{d\pp_0}\right](X)=	1+\frac{\rho_n}{1-\rho_n} q_1 q_2.
	\end{gather*}
\end{lemma}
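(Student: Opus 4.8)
\textbf{Proof plan for Lemma \ref{lemma:means}.}

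The plan is to compute both expectations directly from the definition, using the fact that under a dominating measure $\lambda$ that assigns masses $a$ and $b$ to the points $0$ and $1$ (one can simply take $\lambda$ to be counting measure, so $a=b=1$, or the common convention making the densities probabilities), a Bernoulli$(p)$ law has density $d\pp_p(x) = p^x(1-p)^{1-x}$ at $x\in\{0,1\}$. Hence the likelihood ratio is
\[
\frac{d\pp_q}{d\pp_0}(x) = \left(\frac{\rho_n(1+q)}{\rho_n}\right)^{x}\left(\frac{1-\rho_n(1+q)}{1-\rho_n}\right)^{1-x}
= (1+q)^x\left(1-\frac{\rho_n q}{1-\rho_n}\right)^{1-x}.
\]
For the first identity I would evaluate $\E_0\bigl[(d\pp_q/d\pp_0)^2\bigr](X)$ by summing over the two atoms: the atom $X=1$ contributes $\rho_n(1+q)^2$ and the atom $X=0$ contributes $(1-\rho_n)\bigl(1-\rho_n q/(1-\rho_n)\bigr)^2$. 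Expanding the second term gives $(1-\rho_n) - 2\rho_n q + \rho_n^2 q^2/(1-\rho_n)$, and adding $\rho_n(1+q)^2 = \rho_n + 2\rho_n q + \rho_n q^2$ makes the linear-in-$q$ terms cancel; collecting the quadratic terms yields $\rho_n q^2 + \rho_n^2 q^2/(1-\rho_n) = \rho_n q^2/(1-\rho_n)$, so the total is $1 + \tfrac{\rho_n}{1-\rho_n}q^2$, as claimed.

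For the second identity the computation is identical with $(1+q)^2$ replaced by $(1+q_1)(1+q_2)$ and the quadratic factor from the $X=0$ atom replaced by $\bigl(1-\rho_n q_1/(1-\rho_n)\bigr)\bigl(1-\rho_n q_2/(1-\rho_n)\bigr)$; the atom $X=1$ contributes $\rho_n(1+q_1)(1+q_2)$ and the atom $X=0$ contributes $(1-\rho_n) - \rho_n(q_1+q_2) + \rho_n^2 q_1 q_2/(1-\rho_n)$. Again the terms linear in the $q_i$'s cancel against those from $\rho_n(1+q_1)(1+q_2) = \rho_n + \rho_n(q_1+q_2) + \rho_n q_1 q_2$, and the surviving cross term is $\rho_n q_1 q_2 + \rho_n^2 q_1 q_2/(1-\rho_n) = \rho_n q_1 q_2/(1-\rho_n)$, giving $1 + \tfrac{\rho_n}{1-\rho_n}q_1 q_2$.

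There is no real obstacle here: the lemma is a two-point algebraic identity and the only thing to be careful about is the bookkeeping of signs when expanding $\bigl(1-\rho_n q/(1-\rho_n)\bigr)^{1-x}$, together with checking that the hypotheses $\rho_n\in(0,1/2]$ and $q,q_1,q_2\in[-1,1]$ indeed guarantee $\rho_n(1+q_i)\in[0,1]$ so that $\pp_{q_i}$ is a genuine Bernoulli measure absolutely continuous with respect to $\pp_0$ (the support of $\pp_0$ is all of $\{0,1\}$ since $\rho_n\in(0,1)$, so the likelihood ratio is well defined). The first identity is of course the special case $q_1=q_2=q$ of the second, so in the write-up I would prove the bilinear version and deduce the quadratic one.
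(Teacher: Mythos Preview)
Your proposal is correct; the computation is exactly the natural two-atom expansion, and your bookkeeping is accurate (including the observation that the first identity is the diagonal case of the second). The paper does not actually supply a proof of this lemma---it is stated as a ``simple formula'' and left to the reader---so your direct calculation is precisely what is intended.
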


\subsection{Lower bound for Inhomogeneous Random Graph Model}
We will establish the lower bound for the case of the known change-point location $\tau$. Let $\Pi_n$ be the sampling matrix with the unit entries on the diagonal, $\diag(\Pi_n)=
\one_n$ and with non-zero entries, $\min_{ij} \Pi_{ij}>0$.  In case of $\min_{ij} \Pi_{ij}=1$ there is no missing links. Recall that $\alpha\in(0,1)$, $\beta\in (0,1-\alpha]$ and $\eta=\alpha+\beta$. 
\begin{proof}[Proof of Theorem~\ref{th:lb_tau_unknown}.]
In what follows we denote  by $\delta_x$ the Dirac measure concentrated at $x$, where $x$ can be a real or a matrix value. Denote by $\mathrm P(Y)=\prod_{t=1}^T \mathrm P(Y^t)$ the measure of the observations $Y=(Y^1,\dots,Y^T)$ from~\eqref{Missing_model}. 
	
	Denote by $\tilde \Theta^t=\Pi_n\odot \Theta^t$ the parameter of the observed adjacency matrix.  We will impose the following priors on the matrix parameters $\tilde \Theta^t$ of the dynamic network $Y=(Y^1,\dots,Y^T)$. 
	\begin{description}
		\item[\it Step 1. Priors on the transition matrices.]\ \\
		 	Set $\tilde \rho_n=(1-\eps_n)\dfrac{\omega_n}{n-1}$ for some $\eps_n\in(0,1)$ that will be chosen later.  Assume that under the null hypothesis $\Hyp_0$ all the observed connections occur independently with the same probability $\tilde \rho_n$  for all $1\le t\le T$. Set $ V_0= \tilde \rho_n (\one_n \one_n^\T-\mathrm{id}_n)$ and define the prior under $\Hyp_0$ on the sequence of the sampled connection probability matrices  $\tilde\Theta^t$ ($1\le t\le T$):
$$
		\pi_{n,0}^T(\tilde\Theta^1,\dots,\tilde \Theta^T)=\prod_{t=1}^T\delta_{V_0}(\tilde\Theta^t) = \prod_{t=1}^T\prod_{i\neq j}\delta_{\tilde \rho_n}(\tilde\Theta_{ij}^t).
$$
		Here $\delta_{V_0}$ stands for the Dirac measure concentrated at $V_0$ and defined on the set of matrices $\Pi_n\odot \Theta$ such that $(\Theta,\Pi_n)\in\mathcal S_n(\omega_n)$. The prior is indeed concentrated on  $S_n(\omega_n)$,  since  $\|V_0\|_{1,\infty} =(n-1)\tilde \rho_n =(1-\eps)\omega_n<\omega_n$. 
	
	Let us define the prior under $\Hyp_1$. Let $\zeta=(\zeta_1,\dots,\zeta_n)$ be a vector of i.i.d.\ Rademacher random variables taking values in $\{-1,1\}$ with probability 1/2. 
	Assume that	the sampled connection probability matrices before and after the change are defined by
$$
	V_{1,\zeta}=  V_0 - \Bigl(1-\frac \tau T\Bigr)\Lambda_{n,\zeta} ,\quad  V_{2,\zeta}= V_0+  \frac \tau T\, \Lambda_{n,\zeta} ,
$$
	where  $\Lambda_{n,\zeta}=\dfrac{r_{n,\tau}}{n-1}(\zeta\zeta^\T-\id_n)$ is the change matrix with
	$r_{n,\tau}=\frac{\mathcal R_{n,\tau}}{q(\tau/T)}$.
	Note that the operator norm of $\Lambda_{n,\zeta}$ is equal to $r_{n,\tau}$ and  the energy of the change-point is $q(\tau/T)\|\Lambda_{n,\zeta}\|_{2\to2}=\mathcal R_{n,\tau}$. 
		
	To define the prior concentrated on $\mathcal W(\omega_n,\mathcal R_{n,\tau})$, we need to show that $\|V_{i,\zeta}\|_{1,\infty}\le \omega_{n}$, $i=1,2$ for sufficiently large  $n$. We have that for all $n\ge 2$ and for $i=1,2$,
			\begin{align*}
			\| V_{i,\zeta}\|_{1,\infty} &\le \|V_0\|_{1,\infty} +\frac{\tau\vee (T-\tau)}T \|\Lambda_{n,\zeta}\|_{1,\infty}\\
			&= (1-\eps_n)\omega_n + r_{n,\tau}  \frac{\tau\vee (T-\tau)}T\\
			&= (1-\eps_n )\omega_n +  q\Bigl(\frac\tau T\Bigr) r_{n,\tau}  \sqrt{T-1}\\
			&\le  (1-\eps_n )\omega_n +  \mathcal R_{n,\tau} \sqrt{T}.
			\end{align*}
	Let $\eps_n=(2C_\eta)^{1/4} \omega_n^{-1/2}$, then for all $\omega_n > \sqrt{2C_\eta}$ we have $\eps_n\in(0,1)$.
	It will be shown later in~\eqref{eq:cond_rate_exact} that  $\mathcal R_{n,\tau}\sqrt T \le \eps_n  \omega_n$ and, consequently, $\| V_{i,\zeta}\|_{1,\infty} \le  \omega_n$.
		Thus, the prior under $\Hyp_1$  is well defined and is given by 
$$
		\pi_{n,1}^\tau(\tilde\Theta^1,\dots,\tilde\Theta^T)
		=\prod_{t=1}^\tau \delta_{ V_{1,\zeta}}(\tilde\Theta^t) \prod_{t=\tau+1}^T \delta_{ V_{2,\zeta}}(\tilde\Theta^t).
$$
	\item[\it Step 2.  Likelihood ratio of mixtures.]\ \\
	To shorten the notation,  denote 
	$$
	q_{1,\tau}:=-\Bigl(1-\frac \tau T\Bigr)\frac{r_{n,\tau}}{\tilde \rho_n(n-1)},\quad q_{2,\tau}:=\frac\tau T \frac{r_{n,\tau}}{\tilde \rho_n(n-1)}.
	$$
	We can now calculate the mixtures under $\Hyp_0$ that are given by 
	$$
	\pp_{n,0}(Y)=\E_{\pi_{n,0}^T} \mathrm P(Y)=\prod_{i>j} \prod_{t=1}^T \tilde \rho_n^{Y_{ij}^t}(1-\tilde \rho_n)^{1-Y_{ij}^t}
	$$
	and under $\Hyp_1$ that are given by
	\begin{align*}
	\pp_{n,1}^\tau(Y)=\E_{\pi_{n,1}^T} \mathrm P(Y) 
	&= \E_\zeta \left[\prod_{i>j} \left( \prod_{t=1}^\tau (\tilde\rho_n+\tilde\rho_n q_{1,\tau} \zeta_i\zeta_j)^{Y_{ij}^t}(1-\tilde\rho_n-\tilde\rho_n q_{1,\tau} \zeta_i\zeta_j)^{1-Y_{ij}^t}\right.\right.\\
	&\left.\left.\times \prod_{t=\tau+1}^T (\tilde\rho_n+\tilde\rho_n q_{2,\tau} \zeta_i\zeta_j)^{Y_{ij}^t}(1-\tilde\rho_n-\tilde\rho_n q_{2,\tau} \zeta_i\zeta_j)^{1-Y_{ij}^t}\right)\right],
	\end{align*}
	where $\E_\zeta$ stands for the expectation w.r.t. to the distribution of $\zeta$. 
	
	Denote by $\mathcal Z=\{-1,+1\}^n$ the set of all sequences $\zeta=(\zeta_1,\dots,\zeta_n)$ taking values in $\{-1,+1\}$. We can write  the likelihood ratio of mixtures,
	\begin{align*}
	\frac{d\pp_{n,1}^\tau}{d\pp_{n,0}}(Y)&= 
	\frac1{2^n} \sum_{\zeta\in\mathcal Z} \prod_{i> j} \left(\prod_{t=1}^\tau \frac{d\pp_{q_{1,\tau} \zeta_i\zeta_j}}{d\pp_0} (Y_{ij}^t)\prod_{t=\tau+1}^T \frac{d\pp_{q_{2,\tau} \zeta_i\zeta_j}}{d\pp_0}  (Y_{ij}^t)\right),
	\end{align*}
	where $\pp_0$ stands for the Bernoulli measure with the parameter $\tilde \rho_n$ and  $\pp_q$ denotes the Bernoulli measure with the parameter $\tilde \rho_n(1+q)$, as in Lemma~\ref{lemma:means}, Section~\ref{sec:aux_lb}. 	
	\item[\it Step 3. Second moment of the likelihood ratio.]\ \\
	Let $\tilde \zeta$ be  an independent copy of the vector $\zeta$. Using Lemma~\ref{lemma:means}, we can calculate the second moment of the likelihood ratio,
	\begin{align*}
	\E_0\Bigl[\frac{d\pp_{n,1}^\tau}{d\pp_{n,0}}\Bigr]^2(Y)
	&= 
	\frac1{2^{2n}} \sum_{\zeta,\tilde \zeta\in\mathcal Z} \E_0\left[\prod_{i> j} \prod_{t=1}^\tau \frac{d\pp_{q_{1,\tau} \zeta_i\zeta_j}}{d\pp_0} \frac{d\pp_{q_{1,\tau} \tilde \zeta_i\tilde \zeta_j}}{d\pp_0} (Y_{ij}^t)\prod_{t=\tau+1}^T \frac{d\pp_{q_{2,\tau} \zeta_i\zeta_j}}{d\pp_0}   \frac{d\pp_{q_{2,\tau} \tilde \zeta_i\tilde \zeta_j}}{d\pp_0}(Y_{ij}^t)\right]\\
	&= \frac1{2^{2n}} \sum_{\zeta,\tilde \zeta\in\mathcal Z} \prod_{i> j} \Bigl(1+\frac{\tilde \rho_n}{1-\tilde \rho_n}q_{1,\tau}^2\zeta_i\zeta_j\tilde\zeta_i\tilde\zeta_j\Bigr)^\tau
	\Bigl(1+\frac{\tilde \rho_n}{1-\tilde \rho_n}q_{2,\tau}^2\zeta_i\zeta_j\tilde\zeta_i\tilde\zeta_j\Bigr)^{T-\tau}\\
	&=\frac1{2^{2n}} \sum_{\zeta,\tilde \zeta\in\mathcal Z}  \exp\left(\sum_{i> j} \tau \log \Bigl(1+\frac{\tilde \rho_n}{1-\tilde \rho_n}q_{1,\tau}^2\zeta_i\zeta_j\tilde\zeta_i\tilde\zeta_j\Bigr)\right.\\
	&\left. \quad\quad\quad\quad +(T-\tau)\log \Bigl(1+\frac{\tilde \rho_n}{1-\tilde \rho_n}q_{2,\tau}^2\zeta_i\zeta_j\tilde\zeta_i\tilde\zeta_j\Bigr)\right).
	\end{align*}
	Note that 
	$$
	\frac{\tilde \rho_n}{1-\tilde \rho_n} \left(\tau q_{1,\tau}^2+(T-\tau) q_{2,\tau}^2\right)= \frac{Tq^2(\tau/T)r_{n,\tau}^2}{(1-\tilde \rho_n)\tilde \rho_n(n-1)^2}= \frac{T\mathcal R_{n,\tau}^2}{\omega_n} \frac{1}{(1-\tilde \rho_n)(n-1)(1-\eps_n)}
	$$
	and denote the last quantity by $\mu_n=\frac{T\mathcal R_{n,\tau}^2}{\omega_n} \frac{1}{(1-\tilde \rho_n)(n-1)(1-\eps_n)}$.
	Applying the inequality $\log (1+x)\le x$ and using the fact that 	
    the distribution of $\sum_{i\neq j}\zeta_i\zeta_j\tilde\zeta_i\tilde\zeta_j$ is the same as the one of $ \sum_{i\neq j}\zeta_i\zeta_j$, we obtain the upper bound
	\begin{align}\label{eq:upper_bound}
	\E_0\Bigl [\frac{d\pp_{n,1}^\tau}{d\pp_{n,0}}\Bigr]^2(Y)
	&\le \frac1{2^{2n}} \sum_{\zeta,\tilde \zeta\in\mathcal Z}  \exp\Bigl(\frac12\mu_n \sum_{i\neq j}\zeta_i\zeta_j\tilde\zeta_i\tilde\zeta_j\Bigr)\nonumber \\
	&=\E_\zeta  \exp\Bigl( \frac12\mu_n \sum_{i\neq  j} \zeta_i\zeta_j\Bigr) 
	= \E_\zeta  \exp\Bigl( \frac12\mu_n \zeta^\T (\one_n^\T\one_n -\id_n )\zeta\Bigr).
	\end{align}
	\item[\it Step 4. Upper bound on the second moment.] \ \\ 
	Using  Theorem~2 in~\citep{Cortinovis2021}, we can bound the Laplace transform of the Rademacher chaos  $\sum_{i\neq j}\zeta_i\zeta_j$ as follows. Let $A\in \bR^{n\times n}$ be a symmetric matrix with zero diagonal. Then, $\forall \ 0<\mu<1/4$,
$$
	\log \E \left (e^{\mu \zeta^\T A\zeta}\right )\le \mu\|A\|_F^2\log \frac{1-2\mu}{1-4\mu} \le \frac{2\mu^2\|A\|_F^2}{1-4\mu}.
$$
	
	Using this inequality,  from \eqref{eq:upper_bound} we obtain that for any $\mu_n<1/2$ 
	$$
	\E_0\Bigl[\frac{d\pp_{n,1}^\tau}{d\pp_{n,0}}\Bigr]^2(Y) \le \E_\zeta  \exp\Bigl\{\frac12 \mu_n\zeta^\T (\one_n\one_n^\T -\id_n ) \zeta\Bigr\}\le \exp\Bigl(\frac{\frac12\mu_n^2n(n-1)}{1-2\mu_n}\Bigr).
	$$
 This bound implies that the second moment of the likelihood ratio is less than  $1+4(1-\eta)^2$ if 
	\begin{equation}\label{eq:mu_n2bound}
	\mu_n \le \frac{2C_\eta}{n^2} \left(\sqrt{1+\frac{n^2}{2C_\eta}} -1\right),
	\end{equation}
	where $C_\eta=\log(1+4(1-\eta)^2)$. 	Note that this condition will imply $\mu_n<1/2$. Now, \eqref{eq:mu_n2bound} is satisfied if 
	$$
	\frac{T\mathcal R_{n,\tau}^2}{\omega_n}\le (1-\tilde \rho_n)(1-\eps_n)\Bigl(1-\frac 1n\Bigr)  \sqrt{2C_\eta}  \left(\sqrt{1+\frac{2C_\eta}{n^2}} -\sqrt{\frac{2C_\eta}{n^2}}\right).
	$$
	Noting that $\sqrt{x+1}-\sqrt x\ge (1+2\sqrt x)^{-1}$, we get that this inequality is satisfied if 
	\begin{equation}\label{eq:cond_rate_exact}
	\frac{T\mathcal R_{n,\tau}^2}{\omega_n}\le \sqrt{2C_\eta} (1-\tilde \rho_n)(1-\eps_n)\Bigl(1-\frac 1n\Bigr)  \Biggl(1+ \frac{2\sqrt {2 C_\eta}}n\Biggr)^{-1}.
	\end{equation}
	It means that all the signals with energy $\mathcal R_{n,\tau}$ satisfying~\eqref{eq:cond_rate_exact} are not detectable by any $\alpha$-level test with the type II error smaller than $\beta$. Therefore the lower bound on the minimal detectable energy for an $\alpha$-level test with  type II errors bounded $\beta$ is given by 
	$$
	\mathcal R^*_{n,\tau} \ge \Bigl(2\log\bigl(1+4(1-\eta)^2\bigr) \Bigr)^{1/4}\sqrt{\frac{\omega_n}T} 
	$$
	and the theorem follows.
\end{description}
\end{proof}

\section{Proof of result on the change-point localization}
\begin{proof}[Proof of Proposition~\ref{thm_estimation}.]
 Lemma \ref{lem:matrixBernstein} implies that for any $s\in [T]$, with probability at least $1-\tfrac{\gamma }{T}$
\begin{equation}\label{eq:noise_bound}
\|\xi(s)\|_{2\rightarrow 2}\leq 2\sqrt{2}(1+\epsilon)\sqrt{\omega_{n} }+C_{\epsilon}\log\left (2nT/\gamma\right ).
\end{equation}
By the definition of $\widehat{\tau}_n$ we have $\|Z_T(\widehat{\tau}_n)\|_{2\to 2}\geq \|Z_T(\tau)\|_{2\to 2}$ which implies that
\[\mu_T^\tau(\tau)\Delta-\Vert \xi(\tau)\Vert_{2\rightarrow 2}\leq \mu_T^\tau(\widehat{\tau}_n)\Delta+\Vert \xi(\widehat{\tau}_n)\Vert_{2\rightarrow 2}.\]
Using \eqref{eq:noise_bound} and the union bound we get that with probability at least $1-\gamma$ 
\begin{equation}\label{eq:proof_localisation1}
\left (\mu_T^\tau(\tau)- \mu_T^\tau(\widehat{\tau}_n)\right )\Delta\leq 4\sqrt{2}(1+\epsilon)\sqrt{\omega_{n} }+C_{\epsilon}\log\left (2nT/\gamma\right ).
\end{equation}
First, consider the case $\widehat{\tau}_n\leq \tau$. Using the definition of $\mu_T^\tau(t)$ \eqref{def_mu}, we compute
\begin{align*}
\mu_T^\tau(\tau)- \mu_T^\tau(\widehat{\tau}_n)&=\sqrt{T}\left (q(x^*)-q(\widehat{x})\dfrac{1-x^*}{1-\widehat{x}}\right )\\
&=\sqrt{T}(1-x^*)\left (\dfrac{q(x^*)}{1-x^*}-\dfrac{q(\widehat{x})}{1-\widehat{x}}\right )\\
&=\sqrt{T}(1-x^*)\left (\sqrt{\dfrac{x^*}{1-x^*}}-\sqrt{\dfrac{\widehat{x}}{1-\widehat{x}}}\right )\\
&=\sqrt{\dfrac{T(1-x^*)}{1-\widehat{x}}}\dfrac{x^*-\widehat{x}}{\sqrt{\widehat{x}(1-x^*)}+\sqrt{x^*(1-\widehat{x})}}
\\
&\geq \sqrt{T(1-x^*)}\dfrac{x^*-\widehat{x}}{1.5}
\end{align*}
where we use that for any $x\in(0,1)$, $x(1-x)\leq 1/4$. Plugging this calculation into \eqref{eq:proof_localisation1} we get
\begin{equation}\label{eq:proof_localisation2}
\left (x^*-\widehat{x}\right )\Delta \leq 6(1+\epsilon)\sqrt{\dfrac{2\omega_{n}}{T(1-x^*)} }+\dfrac{1.5C_{\epsilon}\log\left (2nT/\gamma\right )}{\sqrt{T(1-x^*)}}.
\end{equation}
Now assume that $\widehat{\tau}_n\geq \tau$. Then, using the definition of $\mu_T^\tau(t)$, \eqref{def_mu}, we compute
\begin{align*}
\mu_T^\tau(\tau)- \mu_T^\tau(\widehat{\tau}_n)&=\sqrt{T}\left (q(x^*)-q(\widehat{x})\dfrac{x^*}{\widehat{x}}\right )\\
&=\sqrt{T}x^*\left (\dfrac{q(x^*)}{x^*}-\dfrac{q(\widehat{x})}{\widehat{x}}\right )\\
&=\sqrt{T}x^*\left (\sqrt{\dfrac{1-x^*}{x^*}}-\sqrt{\dfrac{1-\widehat{x}}{\widehat{x}}}\right )\\
&=\sqrt{\dfrac{Tx^*}{\widehat{x}}}\dfrac{\widehat{x}-x^*}{\sqrt{\widehat{x}(1-x^*)}+\sqrt{x^*(1-\widehat{x})}}
\\
&\geq \sqrt{T(1-x^*)}\dfrac{\widehat{x}-x^*}{1.5}
\end{align*}
which implies
\begin{equation}\label{eq:proof_localisation3}
\left (\widehat{x}-x^*\right )\Delta \leq 6(1+\epsilon)\sqrt{\dfrac{2\omega_{n}}{Tx^*} }+\dfrac{1.5C_{\epsilon}\log\left (2nT/\gamma\right )}{\sqrt{Tx^*}}.
\end{equation}
Combining \eqref{eq:proof_localisation2} and \eqref{eq:proof_localisation3} and using $q^2(x^*)\leq x^*\wedge (1-x^*)$ we get the statement of the Proposition~\ref{thm_estimation}.
 \end{proof}
\section{Proofs of results for the sparse graphon model}
We start by summarizing  some   notation that we use in the proofs.

Given a matrix $\Theta\in[ -1,1]^{n\times n}$, we define the empirical graphon associated with $\Theta$ as follows: 
\begin{equation}\label{eq:emp_graphon}
\tilde f_\Theta(x,y)=\Theta_{\lceil nx\rceil, \lceil ny \rceil},\quad (x,y)\in[0,1]^2.
\end{equation}
In the same spirit, given a vector $v=(v_1,\dots,v_n)$, for any $x\in[0,1]$, we define the following piecewise constant function
$$
\psi_v(x)=\sqrt{n} v_{\lceil nx\rceil},\quad x\in[0,1]
$$
and set
$$
\mathcal F=\Bigl\{ \psi_v:\ \|v\|_{\ell_2}\le 1\Bigr\}.
$$
We have that $\|v\|_{\ell_2}\le 1$ implies 
$$
\|\psi_v\|_{L_2[0,1]}=\frac 1n\sum_{i=1}^n nv_i^2 \le 1.
$$
We will need to work with a difference of two graphons, so we extend the definition of graphon space. In what follows $\mathcal W$ refers to the collection of bounded symmetric  measurable functions $W:[0,1]^{2}\rightarrow [-1,1]$.

\subsection{Proofs of upper bounds}
\begin{proof}[Proof of Theorem~\ref{th:Kstepgraphon_upperbound}.]
 We have to find a threshold $H_{\alpha,n,T}^*$ such that 
	$$
	\alpha(\psi_{n,T})= \sup_{W\in \mathcal W_0} \P_{W}\Bigl\{\psi_{n,T}=1\Bigr\}\le \alpha
	$$
	and show that 
	$$
	\beta(\psi_{n,T},\delta_{n,T})=\sup_{\tau\in\cpset_T}\left ( \sup_{W^\tau,W^{\tau+1}\in \mathcal W_0(\delta_{n,T})} \P_{W^\tau,W^{\tau+1}}\Bigl\{\psi_{n,T}=0\Bigr\}\right )\le \beta.
	$$
 Under $\Hyp_0$ there is no change in the graphon function $W$ but it can be a change in the features. Let $\Theta_{1}=\left (\rho_n W(\eps_i,\eps_j)\right )_{(i,j)\in [n]\times [n]}$ denotes the matrix of connection probabilities before the time point $\tau$ and  
	$\Theta_{2}=\left (\rho_n W(\eps'_i,\eps'_j)\right )_{(i,j)\in [n]\times [n]}$ the matrix of connection probabilities after $\tau$. 
	Let
	\begin{equation}\label{def_xi_pi}
		\xi^{\pi}(t)= \sqrt{\frac{t(T-t)}T}\left (\dfrac{1}{t}\sum_{s=1}^{t}W^{s}-\dfrac{1}{T-t}\sum_{s=t+1}^{T}
		W^{s}\circ \pi\right )
	\end{equation}
	denote  centered  random matrices of noise corresponding to the permutation $\pi$ .
	We have that
	\begin{equation*}
		Z_T(t)=-\mu_T^\tau(t) \Delta\Theta^\tau+\xi^{*}(t),\quad t=1,\dots,T-1,
	\end{equation*}
	where 
	\begin{equation*}
		\mu_T^\tau(t)= \sqrt{\frac{t(T-t)}T}\left (\frac{\tau}{t}\one_{\{\tau+1\le t\le T\}}+\frac{T-\tau}{T-t}\one_{\{1\le t\le\tau\}}\right ),\quad  \Delta\Theta^\tau=\Theta_{1}-\Theta_{2}\circ\pi^{*}
	\end{equation*}
	and the random matrices
	\begin{equation*}
		\xi^{*}(t)= \sqrt{\frac{t(T-t)}T}\left (\dfrac{1}{t}\sum_{s=1}^{t}W^{s}-\dfrac{1}{T-t}\sum_{s=t+1}^{T}
		W^{s}\circ \pi^{*}\right )
	\end{equation*}
	are centered. Let $\pi'$ be a permutation of $\{1,\dots,n\}$ such that $$\pi'\in \underset{\pi}{\argmin}\|\Theta_{1}-\Theta_{2}\circ\pi\|_{2\to2}.$$
	Let 
	\begin{equation*}
		\xi(t)= \sqrt{\frac{t(T-t)}T}\left (\dfrac{1}{t}\sum_{s=1}^{t}W^{s}-\dfrac{1}{T-t}\sum_{s=t+1}^{T}
		W^{s}\circ \pi'\right )
	\end{equation*}
be the corresponding noise term. 	
	 Using the definition of $\pi^{*}$ and the triangle inequality, we have that
	\begin{align*}
		\|Z_T^{\pi^*}(t)\|_{2\to2}&\le \|Z_T^{\pi'}(t)\|_{2\to2}\\
		&\leq \mu_T^\tau(t)\|\Theta_{1}-\Theta_{2}\circ\pi'\|_{2\to2}+\| \xi(t)\|_{2\rightarrow 2}\\&\leq \sqrt T q( t/T)\|\Theta_{1}-\Theta_{2}\circ\pi'\|_{2\to2}+\| \xi(t)\|_{2\rightarrow 2}.
	\end{align*}
Now, using Lemma \ref{lem:Kstep_graphon}, we have that with probability at least $1-4/n$
\begin{align*}
 \|\Theta_{1}-\Theta_{2}\circ\pi'\|^{2}_{2\to2}&\leq n^{2}\delta^{2}_2(\tilde{f}_{\Theta_{1}},\tilde{f}_{\Theta_{2}})\leq  
 2 n^{2}\left (\delta^{2}_2(\tilde{f}_{\Theta_{1}},\rho_n W )+
 \delta^{2}_2(\tilde{f}_{\Theta_{2}},\rho_n W ) \right )\\
 &\leq 32 n^{2}\rho^{2}_n\sqrt{\frac{K}{n}\log(n)}
\end{align*}
where $\tilde{f}_{\Theta_{i}}$ is the empirical graphon associated with $\Theta_i$.
Note that, for any $x>0$,
\begin{align*}
\P_W(\| \xi^{\pi'}(t)\|_{2\rightarrow 2}>x)=\E_{\{\varepsilon_{\vartheta}, \varepsilon'_{\vartheta} \}_{\vartheta\in \mathcal V}}\left [\P\left (\| \xi(t)\|_{2\rightarrow 2}>x\left \vert \{\varepsilon_{\vartheta}, \varepsilon'_{\vartheta} \}_{\vartheta\in \mathcal V}\right .\right )\right ].
\end{align*}
Now, we can bound $\P\left (\| \xi^{\pi'}(t)\|_{2\rightarrow 2}>x\left \vert \{\varepsilon_{\vartheta}, \varepsilon'_{\vartheta} \}_{\vartheta\in \mathcal V}\right .\right )$ using Lemma \ref{lem:matrixBernstein}:
for every $\delta\in(0,1)$, conditionally on $\{\varepsilon_{\vartheta}, \varepsilon'_{\vartheta} \}_{\vartheta\in \mathcal V}$,  we have 
\begin{equation*}
\| \xi^{\pi'}(t)\|_{2\rightarrow 2}\leq 2\sqrt{2}(1+\epsilon)\sqrt{\frac{t(T-t)}T} \left [\frac1{t^2}\sum_{s=1}^{t}\| \Theta^{s}\|_{1,\infty}+ \frac1{(T-t)^2}\sum_{s=t+1}^{T}\|\Theta^{s}\|_{1,\infty}\right ]^{1/2} + C_\epsilon \log\left (\frac{2n}\delta\right )
\end{equation*}
with the probability larger than $1-\delta$ where $\Theta^{t}=\rho_n\left (W^{t}(\varepsilon_i,\varepsilon_j)\right )_{(i,j)\in[n]\times [n]}$. Note that $\|\Theta^{t}\|_{1,\infty}\leq n\rho_n$ and we get
$$
\| \xi^{\pi'}(t)\|_{2\rightarrow 2}\leq 2(1+\epsilon)\sqrt{2n\rho_n} + C_\epsilon \log\left (\frac{2n}\delta\right )
$$
with the probability larger than $1-\delta$. Taking $\delta=\alpha/(2|\mathcal T|)$, using the union bound and $\alpha\geq 8/n$, we obtain
$$
\alpha(\psi_{n,T})= \sup_{W\in \mathcal W_0} \P_{W,\eps} \Bigl\{\max_{t\in\mathcal T}\|Z_T^{\pi^*}(t)\|_{2\to2} \ge H_{\alpha,n,T}^*\Bigr\}\le \alpha.
$$
Let us turn to the type II error.  The proof is close to the proof of Lemma~\ref{lem:typeIIerror_missing_wolog} but, compared to the case of inhomogeneous random graph model considered in Lemma~\ref{lem:typeIIerror_missing_wolog}, we need to account for possible change in the latent variables $\varepsilon$. The key point is to link the change in the operator norm of the matrix of parameters to the operator norm of the corresponding empirical graphon.  Following the proof of Lemma~\ref{lem:typeIIerror_missing_wolog} we can show that, if $\tau\le T/2$, there exists $t^*\in\mathcal T^L$ such that $\tau/2\le t^*<\tau$ and 
\begin{align*}
	&\beta(\psi_{n,T},\delta_{n,T})\\
	& \le \sup_{W^\tau,W^{\tau+1}\in \mathcal W_0(\delta_{n,T})} \P_{W^\tau,W^{\tau+1}} \left\{\|\xi^{*}(t^*)\|_{2\to 2} >\frac {\sqrt{T}q(\tau/T)}{\sqrt 3} \|\Delta\Theta^\tau\|_{2\to 2} -H^*_{\alpha,n,{t^*}}\right\}.
\end{align*}

 Lemma \ref{lem:operator_norm} and the definition of the empirical graphon \eqref{eq:emp_graphon} imply
\begin{align*}
\| \xi^{*}(t^{*})\|_{2\rightarrow 2}&> 	\frac {\sqrt{T}q(\tau/T)}{\sqrt 3} \|\Theta_{1}-\Theta_{2}\circ\pi^{*}\|_{2\to2} -H_{\alpha,n,t^*}^* \quad \text{(triangle inequality)}\\
&\geq \frac {n\sqrt{T}q(\tau/T)}{\sqrt 3} \Vert\tilde{f}_{\Theta_{1}-\Theta_{2}\circ \pi^{*}}\Vert_{2\to2}-H_{\alpha,n,t^*}^* \quad \text{(Lemma \ref{lem:operator_norm})}\\
&= \frac {n\sqrt{T}q(\tau/T)}{\sqrt 3}\Vert\tilde{f}_{\Theta_{1}}-\tilde{f}_{\Theta_{2}\circ \pi^{*}}\Vert_{2\to2}-H_{\alpha,n,t^*}^*\\
&\geq\frac {n\sqrt{T}q(\tau/T)}{\sqrt 3}\delta(\tilde{f}_{\Theta_{1}},\tilde{f}_{\Theta_{2}})-H_{\alpha,n,t^*}^*\quad \text{(using \eqref{eq:emp_graphon})}
\end{align*}
Now, using twice the triangle inequality, we get
\begin{align}\label{eq:1}
	\| \xi^{*}(t^{*})\|_{2\rightarrow 2}
	\geq \frac {n\sqrt{T}q(\tau/T)}{\sqrt 3}\left (\delta(\rho_n\,W_1,\rho_n\,W_2)-\delta(\tilde{f}_{\Theta_{1}},\rho_n\,W_1)-\delta(\rho_n\,W_2,\tilde{f}_{\Theta_{2}})\right )-H_{\alpha,n,t^*}.
\end{align}
Note that Lemma \ref{lem:Kstep_graphon} and $\delta(\tilde{f}_{\Theta_{i}},\rho_n\,W_i)\leq \delta_2(\tilde{f}_{\Theta_{i}},\rho_n\,W_i)$ imply
\[
\delta(\tilde{f}_{\Theta_{i}},\rho_n\,W_i)\leq 4\rho_n\left (\dfrac{K_i\log n}{n}\right )^{1/4}
\]
with probability at least $1-2/n$.  Take $\beta\geq 6/n$. Using  Lemma \ref{lem_bernstein_sup} with $\delta=\beta/3$ we obtain
$$
	\| \xi^{\pi^*}(t^*)\|_{2\rightarrow 2}\le\sup_{\pi}\|\xi^\pi(t^*)\|_{2\to 2}\leq 2(1+\epsilon)\sqrt{2n\rho_n} + C_\epsilon \log\left (\frac{12n}\beta\right ).
$$
with probability $1-\beta/3$. This implies that $\beta(\psi_{n,T},\delta_{n,T})\le \beta$ if 
$$
\frac {n\sqrt{T}q(\tau/T)}{\sqrt 3}\left (\rho_n\delta(W_1,W_2)-4\rho_n\left (\dfrac{(K_1+K_2)\log n}{n}\right )^{1/4}\right )\ge H_{\alpha,n,t^*}^* + 2(1+\epsilon)\sqrt{2n\rho_n}+C_\epsilon \log\left(\frac{12n}\beta\right)
$$
Combining this condition with the threshold $H_{\alpha,n,t}^*$ defined in~\eqref{threshold_kstep_graphon} and the facts that $q(t^*/T)\le q(\tau/T)$ for $t^*<\tau\le T/2$ and $K_1,K_2\le K$, we obtain the detection condition~\eqref{detection_boundary_graphon_stepf}. The case of $\tau>T/2$ is analogous.


 \end{proof}
\begin{proof}[Proof of Theorem~\ref{th:Holdergraphon_upperbound}.]
	Theorem \ref{th:Holdergraphon_upperbound} follows from  combining the bounds obtained in  Lemma~\ref{lem:operator_norm}, Lemma~\ref{lem:Holder_graphon} and Theorem \ref{th:upper_bound_missing_loglog}.
 \end{proof}

\begin{lemma}\label{lem:operator_norm}
	Let $\Theta=(\Theta_{ij})\in[-1,1]^{n\times n}$ be symmetric matrix. Then
	$$
	\|\Theta\|_{2\to 2} \ge n\|\tilde f_{\Theta}\|_{2\to2}. 
	$$
\end{lemma}
\begin{proof}
	By the definition of the operator norm, we have that
	\begin{align*}
		\| \tilde  f_\Theta\|_{2\to2}&= \sup_{\psi\in L_2[0,1],\|\psi\|_{L_2}\le 1}
		\Bigl|  \iint_{[0,1]^2} \tilde f_\Theta (x,y) \psi(x)\psi(y)\,dx\,dy\Bigr|\\&=\sup_{\psi\in L_2[0,1],\|\psi\|_{L_2}\le 1}\Bigl| \sum_{ij}\Theta_{ij}\int_{i/n}^{(i+1)/n}
		\psi(x)\,dx	\int_{j/n}^{(j+1)/n}
		\psi(y)\,dy \Bigr|\\
		\\&=\sup_{\psi\in L_2[0,1],\|\psi\|_{L_2}\le 1}\Bigl|  \dfrac{1}{n}\sum_{ij}\Theta_{ij}v_i^{\psi} v_j^{\psi}	\Bigr|	
	\end{align*}
	where $v_i^{\psi}=\sqrt{n}\int_{i/n}^{(i+1)/n}
	\psi(x)\,dx	$ and $v^{\psi}=(v_i^{\psi})^{n}_{i=1}$.  Note that the Cauchy-Schwartz inequality implies \[\left (\int_{i/n}^{(i+1)/n}
		\psi(x)\,dx\right )^{2}\leq \dfrac{1}{n}\int_{i/n}^{(i+1)/n}
		\psi^{2}(x)\,dx\]
		and we get that $\|v^{\psi}\|_{\ell_2}\le 1$
		for $\psi$ such that $\|\psi\|_{L_2}\le 1$. Now we can write
	\begin{align*}
		n	\| \tilde  f_\Theta\|_{2\to2}\leq \sup_{\|v\|_{\ell_2}\le 1}\Bigl| \sum_{ij}\Theta_{ij}v_i v_j \Bigr|=	\|\Theta\|_{2\to 2}	
	\end{align*}
	and the statement of Lemma \ref{lem:operator_norm} follows.
\end{proof}

\begin{lemma}\label{lem:Kstep_graphon}
		For any $K\le \frac{n}{\log(n)}$ assume that  $W\in \mathcal W_K$. Let $\Theta=(\Theta_{ij})\in[0,1]^{n\times n}$ be symmetric matrix with entries $\Theta_{ij}=W(\xi_i,\xi_j)$ for $i<j$, where $\xi_i$ are i.i.d. uniform random variables on $[0,1]$. We have that, with probability large than $1-2/n$,
		$$
		\delta(\tilde f_\Theta,W) \le 4\Bigl(\frac Kn \log \left (n\right )\Bigr)^{1/4}.
		$$
	\end{lemma}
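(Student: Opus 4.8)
The plan is to exhibit one explicit measure‑preserving bijection $\iota$ and bound $\iint_{[0,1]^2}|\tilde f_\Theta(\iota(x),\iota(y))-W(x,y)|^2\,dx\,dy$, which by definition upper‑bounds $\delta^2(\tilde f_\Theta,W)$.

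First I would record the combinatorial structure. Using Definition~\ref{def:K_step_graphon}, write $W(x,y)=Q_{\phi(x),\phi(y)}$ for a symmetric $Q\in\mathbb R^{K\times K}$ and $\phi:[0,1]\to\{1,\dots,K\}$, set $p_k=\mathrm{Leb}(\phi^{-1}(k))$, assign to each index $i$ the label $z_i:=\phi(\varepsilon_i)$, and let $n_k=\#\{i:z_i=k\}$, $q_k=n_k/n$, $T_k=\bigcup_{i:z_i=k}I_i$ with $I_i=((i-1)/n,i/n]$. On $I_i\times I_j$ the empirical graphon $\tilde f_\Theta$ is the constant $\Theta_{ij}=W(\varepsilon_i,\varepsilon_j)=Q_{z_i,z_j}$ whenever $i\neq j$; that is, off the diagonal blocks $\bigcup_i I_i\times I_i$ (total Lebesgue measure $1/n$), $\tilde f_\Theta$ is the $K$‑step graphon with the same matrix $Q$ but with community proportions $(q_1,\dots,q_K)$ rather than $(p_1,\dots,p_K)$.

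Second, I would build $\iota$. For each $k$ choose subsets $A_k\subseteq\phi^{-1}(k)$ and $B_k\subseteq T_k$ of equal Lebesgue measure $\min(p_k,q_k)$ and let $\iota$ restrict to a measure‑preserving bijection $A_k\to B_k$; on the leftovers $[0,1]\setminus\bigcup_kA_k$ and $[0,1]\setminus\bigcup_kB_k$, which have the same total measure $1-\sum_k\min(p_k,q_k)=\tfrac12\|p-q\|_1$ (here $\|p-q\|_1=\sum_k|p_k-q_k|$), let $\iota$ be an arbitrary measure‑preserving bijection; such maps exist because any two measurable subsets of $[0,1]$ of equal measure are isomorphic mod null, which is the convention underlying the definition~\eqref{eg:graphon_dist}. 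If $x,y$ both lie in the matched parts $\bigcup_kA_k$ and moreover $\lceil n\iota(x)\rceil\neq\lceil n\iota(y)\rceil$, then $z_{\lceil n\iota(x)\rceil}=\phi(x)$, $z_{\lceil n\iota(y)\rceil}=\phi(y)$, hence $\tilde f_\Theta(\iota(x),\iota(y))=Q_{\phi(x),\phi(y)}=W(x,y)$ and the integrand vanishes. The integrand (always at most $1$) can therefore be nonzero only when $x$ or $y$ is unmatched — a set of measure at most $2\cdot\tfrac12\|p-q\|_1=\|p-q\|_1$ — or when $\lceil n\iota(x)\rceil=\lceil n\iota(y)\rceil$, whose measure is $1/n$ since $\iota\times\iota$ is measure‑preserving. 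Thus $\delta^2(\tilde f_\Theta,W)\le\|p-q\|_1+1/n$.

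Third, I would control $\|p-q\|_1$ in probability. Each $n_k\sim\mathrm{Bin}(n,p_k)$, so Bernstein's inequality gives, with probability at least $1-2/(Kn)$, $|q_k-p_k|\le\sqrt{2p_k\log(Kn)/n}+\tfrac23\log(Kn)/n$; summing over $k$, using $\sum_k\sqrt{p_k}\le\sqrt K$ (Cauchy–Schwarz) and $K\le n$ so $\log(Kn)\le 2\log n$, a union bound yields $\|p-q\|_1\le 2\sqrt{K\log n/n}+\tfrac43\,K\log n/n$ with probability at least $1-2/n$. Now the hypothesis $K\le n/\log n$ forces $K\log n/n\le 1$, hence $K\log n/n\le\sqrt{K\log n/n}$, and likewise $1/n\le\sqrt{\log n/n}\le\sqrt{K\log n/n}$; collecting these, $\delta^2(\tilde f_\Theta,W)\le C\sqrt{K\log n/n}$ on that event with $C\le 5$, so $\delta(\tilde f_\Theta,W)\le\sqrt5\,(K\log n/n)^{1/4}\le 4(K\log n/n)^{1/4}$, as claimed.

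The step needing the most care is the second one: making sure that for the chosen $\iota$ the integrand genuinely vanishes except on a set of measure at most $\|p-q\|_1+1/n$ — in particular, correctly accounting for the diagonal‑block contribution and checking that the unmatched mass in each coordinate is exactly $\tfrac12\|p-q\|_1$ rather than something larger after passing to the product space. The binomial tail estimate and the final constant bookkeeping are routine.
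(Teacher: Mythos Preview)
Your proof is correct and follows essentially the same route as the paper. The paper invokes Proposition~3.2 of \citep{klopp_graphon} to obtain the key inequality $\delta^2(\tilde f_\Theta,W)\le \tfrac1n+\sum_{a=1}^K|\lambda_a-\widehat\lambda_a|$ (in your notation, $\tfrac1n+\|p-q\|_1$) and then applies Bernstein's inequality and a union bound exactly as you do; your second step simply reproves that cited proposition by constructing the matching bijection explicitly, and your concentration step and constant bookkeeping are the same as the paper's.
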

	\begin{proof}
		Following the proof of Proposition 3.2 in~\citep{klopp_graphon}, we get  
		$$
		\delta_2 (\tilde f_{\Delta\Theta^\tau},\Delta W^\tau)\le \frac 1n +\sum_{a=1}^K |\lambda_a-\widehat \lambda_a|,
		$$	
		with
		$$
		\widehat \lambda_a =\frac 1n \sum_{i=1}^n \one_{\{\varepsilon_i\in\phi^{-1}(a)\}}
		$$
		and  $\lambda_a=\lambda (\phi^{-1}(a))$, where $\lambda$ stands for the Lebesgue measure. Since $\varepsilon_1,\dots\varepsilon_n$ are i.i.d. uniform random variables, $n\widehat \lambda_a $ has a binomial distribution with parameters $(n,\lambda_a)$. We have $n\widehat \lambda_a -n \lambda_a =\sum_{i=1}^n (Y_i- \lambda_a )$, where $Y_i\sim \mathrm{Bernoulli}(\lambda_a)$. Applying the Bernstein inequality we obtain that for any $t>0$
		$$
		|n\widehat \lambda_a -n \lambda_a| \le \left(2t\sum_{a=1}^K \lambda_a(1-\lambda_a)\right)^{1/2}+2t/3
		$$
		with probability $1-2e^{-t}$. Taking $t=\log(nK)$ implies that with probability $1-2/(nK)$
		$$
		|n\widehat \lambda_a -n \lambda_a| \le \left(2n\lambda_a \log(nK)\right)^{1/2}+\frac 23\log (nK).
		$$
		
		Using $K\leq n$ and the union bound we obtain that, with probability $1-2/(n)$,
		$$
		\delta^{2}(\tilde f_\theta,W)
		\le \frac 1n +\frac 2n \sum_{a=1}^K \left(n\lambda_a\log n\right)^{1/2}+\frac{4K\log(n)}{3n}\le \frac 1n +2\left(\frac{K\log n}n\right)^{1/2}+\frac{4K\log(n)}{3n}
		$$
		where we use $\sum_{a=1}^K \lambda_a=1$ and the Cauchy--Schwarz inequality. Using $\frac{K\log(n)}{n}\leq 1$ we complete the proof of Lemma \ref{lem:Kstep_graphon}.
	\end{proof}

\begin{lemma}\label{lem:Holder_graphon}
Assume that  $W\in \Sigma(\gamma,L)$. Let $\Theta=(\Theta_{ij})\in[0,1]^{n\times n}$ be symmetric matrix with entries $\Theta_{ij}=W(\varepsilon_i,\varepsilon_j)$ for $i<j$, where $\varepsilon_i$ are i.i.d. uniform random variables on $[0,1]$. We have that, 	with probability at least $1-2/n$,
	$$
	\delta(\tilde f_\Theta,W)\le 2\left(\frac{\log n}n\right)^{\frac{\gamma\wedge 1}2}
	$$

\end{lemma}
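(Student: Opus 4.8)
The plan is to bound $\delta(\tilde f_\Theta, W)$ directly, comparing $W$ with the empirical graphon built from the \emph{sorted} sample through the simplest possible measure-preserving map, rather than reducing to the step-graphon estimate of Lemma~\ref{lem:Kstep_graphon} (that route would lose too much, since matching random block sizes contributes a term of order $(k\log n/n)^{1/4}$ that is too crude to recover the smooth rate). Let $\varepsilon_{(1)}\le\cdots\le\varepsilon_{(n)}$ be the order statistics of $\varepsilon_1,\dots,\varepsilon_n$ and let $\Theta^\sigma$ be the relabelling of $\Theta$ that sorts the coordinates, so that $\Theta^\sigma_{ij}=W(\varepsilon_{(i)},\varepsilon_{(j)})$. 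A permutation of the rows and columns of a matrix is induced by a measure-preserving bijection of $[0,1]$, so $\delta(\tilde f_\Theta,W)=\delta(\tilde f_{\Theta^\sigma},W)$, and taking $\iota=\mathrm{id}$ in~\eqref{eg:graphon_dist} yields $\delta(\tilde f_\Theta,W)\le\|\tilde f_{\Theta^\sigma}-W\|_{L_2([0,1]^2)}$.

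Next I would estimate this $L_2$ distance cell by cell. On $C_{ij}=\bigl((i-1)/n,i/n\bigr]\times\bigl((j-1)/n,j/n\bigr]$ we have $\tilde f_{\Theta^\sigma}(x,y)=W(\varepsilon_{(i)},\varepsilon_{(j)})$, hence
\[
\|\tilde f_{\Theta^\sigma}-W\|_{L_2}^2=\sum_{i,j=1}^n\iint_{C_{ij}}\bigl|W(\varepsilon_{(i)},\varepsilon_{(j)})-W(x,y)\bigr|^2\,dx\,dy .
\]
The definition of $\Sigma(\gamma,L)$ gives the H\"older-type bound $|W(u,v)-W(x,y)|\le C_L\bigl(|u-x|^{\gamma\wedge1}+|v-y|^{\gamma\wedge1}\bigr)$ on $[0,1]^2$, with $C_L$ depending only on $L$; this is immediate for $\gamma\le1$ (the degree-$0$ Taylor polynomial is $W(x,y)$) and for $\gamma>1$ follows, as is standard for this class (cf.\ \citep{klopp_graphon}), from the boundedness $0\le W\le1$ together with the Taylor estimate. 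To turn this into a quantitative bound I would control the order statistics: by the Dvoretzky--Kiefer--Wolfowitz inequality applied to the empirical c.d.f.\ $F_n$ of $\varepsilon_1,\dots,\varepsilon_n$, with probability at least $1-2/n$ one has $\sup_t|F_n(t)-t|\le\sqrt{\log n/(2n)}$, so $\max_i|\varepsilon_{(i)}-i/n|\le\sqrt{\log n/(2n)}+1/n$ and therefore $|\varepsilon_{(i)}-x|\le c\sqrt{\log n/n}$ uniformly for $x$ in the $i$-th interval. Plugging this in, using $(a+b)^2\le2a^2+2b^2$, and summing over the $n^2$ cells (each of area $n^{-2}$) gives $\|\tilde f_{\Theta^\sigma}-W\|_{L_2}^2\le C\,(\log n/n)^{\gamma\wedge1}$, whence $\delta(\tilde f_\Theta,W)\le C'(\log n/n)^{(\gamma\wedge1)/2}$ on that event, which is the claimed bound.

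The one step requiring genuine care is the reduction from the Taylor-polynomial definition of $\Sigma(\gamma,L)$ to the clean first-order H\"older bound in the regime $\gamma>1$: one must argue that a $[0,1]$-valued function obeying the Taylor estimate has gradient controlled by a constant multiple of $L$ (comparing the values at two points at distance of order one and using $0\le W\le1$ against the expansion). Everything else---the $1/n$ bookkeeping relating the order-statistic interval $((i-1)/n,i/n]$ to $\varepsilon_{(i)}$, the DKW deviation bound, and the final summation---is routine, and the probability budget $1-2/n$ matches the statement, all estimates holding on the single DKW event.
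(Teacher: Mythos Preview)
Your proof is correct and follows essentially the same route as the paper's: pass to the sorted sample via a measure-preserving permutation, bound $\delta(\tilde f_\Theta,W)$ by the $L_2$ distance between $W$ and the relabelled empirical graphon, and use the H\"older continuity of $W$ together with a uniform bound on $|\varepsilon_{(i)}-i/n|$. The only differences are technical: the paper invokes Proposition~3.6 of \citep{klopp_graphon} for the reduction to $\delta^2(\tilde f_\Theta,W)\le 2/n+\tfrac1n\sum_m\bigl|m/(n+1)-\varepsilon_{(m)}\bigr|^{2(\gamma\wedge1)}$ and then controls each order statistic individually via the sub-Gaussianity of the $\mathrm{Beta}(m,n+1-m)$ distribution \citep{marchal2017} followed by a union bound, whereas you handle all order statistics at once with the DKW inequality; both yield the same rate and the same probability budget $1-2/n$.
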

\begin{proof}
  	Following the proof of Proposition 3.6 in  \citep{klopp_graphon},  we get 
	$$
	\delta^2(\tilde f_\Theta,W)\le \frac 2n+\frac 1n \sum_{m=1}^n \Bigl| \frac m{n+1}-\varepsilon_{(m)}\Bigr|^{2\gamma'}
	$$	
	where $\gamma'=\gamma\wedge 1$ and $\varepsilon_{(m)}$ stands for the $m$-th largest element of the set $\{\varepsilon_1,\dots,\varepsilon_n\}$. Note that, the random variable $\varepsilon_{(m)}$ follows $\beta$-distribution with parameters $(m,n+1-m)$, $\varepsilon_{(m)}\sim \mathrm{Beta}(m,n+1-m)$. The $\beta$-distribution is sub-Gaussian and the proxy variance $\sigma^2$ for $\mathrm{Beta}(m,n+1-m)$ is bounded by $\frac{1}{4(n+2)}$ (see, for example, \citep{marchal2017}). By the exponential Markov inequality (see, for example, \citep{vershynin_2018}, Lemma~5.5) we get
	$$
	\P\left\{\Bigl|\varepsilon_{(m)} -\frac m{n+1}\Bigr|>t\right\}\le 2e^{-t^2/(4\sigma^2)}.
	$$
	Taking $t=(\log n/(n+2))^{1/2}$ implies that, 	with probability at least $1-2/n^2$,
	$$
	\Bigl|\varepsilon_{(m)} -\frac m{n+1}\Bigr| < \left(\frac{\log n}{n+2}\right)^{1/2}.
	$$
Now, applying the union bound we obtain
	$$
	\delta^2(\tilde f_\Theta,W)\le \frac 2n +\left(\frac{\log n}{n+2}\right)^{\gamma'}
	$$
	and Lemma \ref{lem:Holder_graphon} follows. 
 \end{proof}

\subsection{Proof of the lower bound for $K$-step graphons}

We will start with the definition  of a class of $K$-step graphons used throughout the proof. 
Let $u=(u_1,\dots,u_K)\in (-\frac 1K,\frac1K)^K$ be a given vector satisfying $\sum\limits_{k=1}^K u_k=0$. Define  the partition $\Pi=\bigcup\limits_{1\le k,l\le K} \Pi_{kl}$ of the set $[0,1]^2$ into $K^2$ blocks:
$$
\Pi_{kl}(u)=\Bigl [\frac {k-1}K +\sum_{i=1}^{k-1}u_i,\frac {k}K+\sum_{i=1}^{k}u_i\Bigr )\times\Bigl [\frac {l-1}K +\sum_{i=1}^{l-1}u_i,\frac lK+\sum_{i=1}^{l}u_i \Bigr )\quad \forall 1\le k,l\le K.
$$
Let $Q=(Q_{kl})_{1\le k,l\le K}\in [0,1]^{K\times K}$ be a  matrix of connection probabilities. The $K$-step graphon $W_u$ is a blockwise constant function defined by 
\begin{equation}\label{eq:W_u}
W_u(x,y)=\sum_{k,l\in[K]^2} Q_{kl} \mathbf 1 \{(x,y) \in \Pi_{kl}(u)\}.
\end{equation}

Let $\boldsymbol\eps=(\eps_1,\dots,\eps_n)\in[0,1]^n$ be the vector of i.i.d. features uniformly distributed over $[0,1]$. Note that the community assignment of each vertex $\vartheta$ is defined by the corresponding variable $\eps_{\vartheta}$:
$$
\P\Bigl\{\mbox{$\vartheta$ belongs to the block $k$}\Bigr\}= \P\Bigl\{\eps_\vartheta\in\Bigl [\frac {k-1}K +\sum_{i=1}^{k-1}u_i,\frac {k}K+\sum_{i=1}^{k}u_i\Bigr )\Bigr\}=\frac 1K +u_k. 
$$
We can introduce a new random variable $\xi_\vartheta=\xi_\vartheta(u)$, $\vartheta\in[n]$ of the block assignment following multinomial distribution $\mathcal M(K,p_1,\dots,p_K)$ with parameters $p_k=1/K+u_k$. Denote the corresponding vector  of i.i.d. multinomial variables by $\xi_u=(\xi_1(u),\dots,\xi_n(u))\in[K]^n$. Given $\xi_u$, the connection probabilities are given by
$$
\Theta_{ij}(\xi_u)=\begin{cases}\rho_n Q_{\xi_i(u),\xi_j(u)}, & i\neq j\\
0,& i=j.
\end{cases}
$$
Denote by $\P_{W_u,\xi_u}(A^t)$ the conditional distribution of the dynamic network at time $t$ given the node assignment $\xi_u$:
\begin{align*}
\P_{W_u,\xi_u}(A^t)&=\prod_{i<j} \Theta_{ij}(\xi_u)^{A_{ij}^t} (1-\Theta_{ij}(\xi_u))^{1-A_{ij}^t} \\
&= \prod_{i<j} \sum_{k,l\in[K]^2}(\rho_n Q_{kl})^{A_{ij}^t} (1-\rho_n Q_{kl})^{1-A_{ij}^t} \one\{ (\xi_i(u),\xi_j(u))=(k,l)\}.
\end{align*}

In what follows we denote by $A=(A^1,\dots,A^T)$ the full set of observations and by  $A^{\le \tau}=(A^1,\dots,A^\tau)$ and $A^{>\tau}=(A^{\tau+1},\dots,A^T)$ the realizations before and after the time~$\tau$.  We denote  by $\P^{\otimes \tau}(A^{\le \tau})$ and $\P^{\otimes (T-\tau)}(A^{>\tau})$  the corresponding product measures.

A $K$-step graphon depends on two main ingredients: the partition $\Pi_K$ of $[0,1]^2$  and the connection probability matrix $Q$. We will see that choosing different prior distributions on $Q$ and $\Pi$ will lead to two different lower bounds. The first lower bound, that we call {\it agnostic error lower bound}, will be derived from the uncertainty of sampling  vector of features $\eps$. The second one, that we call {\it network sampling lower bound}, comes from the uncertainty of random realizations of the network. 

Without loss of generality, we will prove the result for $K=2$.
Indeed, for any $K>2$, 
$$
\inf_{\psi\in \Psi_\alpha}\sup_{W^\tau,W^{\tau+1}\in \mathcal W_K(\delta_{n,T})} \P_{W^\tau,W^{\tau+1}} \Bigl\{\psi=0\Bigr\}\ge \inf_{\psi\in \Psi_\alpha}\sup_{W^\tau,W^{\tau+1}\in \mathcal W_2(\delta_{n,T})} \P_{W^\tau,W^{\tau+1}} \Bigl\{\psi=0\Bigr\}
$$
and the boundary for the case of two blocks will imply the one for $K$ blocks. 

\begin{proof}[Proof of Theorem~\ref{th:Kstepgraphon_lowerbound}.]
\ \\
{\it I. Agnostic error lower bound.}
The first lower bound is related to the error coming from the sampling of $\boldsymbol\eps$. We start by choosing a prior distribution on the graphons and the assignment vectors. Based on the prior, we will bound the Kullback-Leibler divergence between measures under the null and the alternative hypotheses. Note that it follows from~\eqref{eq:lb_general}  and the inequality $\frac 12\|\P_1-\P_0\|_{\mathrm{TV}}\le \sqrt{\frac12\mathrm{KL}(\P_0,\P_1)}$ that the type II error is bounded from below by $\beta$ if $\mathrm{KL}(\P_0,\P_1)\le 2(1-\alpha-\beta)^2=2(1-\eta)^2$. Thus, we need to provide an upper bound on the Kullvack--Leibler divergence that will imply the corresponding lower bound on the minimax detectable distance between graphons.

\begin{description}
	\item {\it Step 1. Choice of priors.} 
	We will use $W_u$ graphons defined in~\eqref{eq:W_u}. We suppose that the connection probability matrix $Q$ is the same under $\Hyp_0$ and under $\Hyp_1$ and is defined as $Q=\begin{pmatrix} 1 & 1\\ 1& 0\end{pmatrix}$.  
	 
	1. {\it Prior under $\Hyp_0$:} we choose vector $u=0$ and get  a partition $\Pi$ with blocks of a constant size $1/2$. The corresponding graphon is denoted by $W_0$.   Let $\xi_0$ and  $\xi_0'$  be two independent blocks assignment vectors following the multinomial distribution with  class probabilities $p_k=1/2$, $k=1,2$. 
Then, the measure under $\Hyp_0$ be given by
	\begin{align*}
	\P_0(A)
	&=\Bigl(\sum_{a\in\{1,2\}^n} \P(\xi_0=a) \P_{W_0, \xi_0=a}^{\otimes \tau} (A^{\le\tau})\Bigr) \Bigl(\sum_{b\in\{1,2\}^n} \P(\xi_0'=b) \P_{W_0,\xi_0'=b}^{\otimes (T-\tau)} (A^{>\tau})\Bigr).
	\end{align*}
	2. {\it Prior under $\Hyp_1$:}  fix some  $0<\lambda<1/2$. Let $u=(\lambda,-\lambda)$, $v=(-\lambda,\lambda)$ and $W_u$, $W_v$ be the corresponding graphons with the probabilities of classes  $1/2+\lambda$ and $1/2-\lambda$. The only difference between these two graphons  is a slight disequilibrium around $(x,y)\in[1/2-\lambda,1/2+\lambda]^2$. It is not difficult to see that $\delta^2(W_u,W_v)\ge 2\lambda$. 
	
Let $\xi_u\in\{1,2\}^n$ and $\xi_v\in\{1,2\}^n$ be two independent class assignment vectors before and after the change in the graphon such that $\P(\xi_i(u)=k)=\frac 12+u_k$, $\P(\xi_i(v)=k)=\frac 12+v_k$ $\forall i=1,\dots,n$, $k=1,2$.   Note that the  prior with  two different assignment vectors and the same the connection probability matrix $Q$ takes  into account the case of a possible label mismatch before and after the change. The measure under $\Hyp_1$ is defined as  
	$$
	\P_1(A)=\Bigl(\sum_{a\in\{1,2\}^n}\P(\xi_u=a)\P_{W_u,\xi_u=a}^{\otimes \tau} (A^{\le\tau})\Bigr) \Bigl(\sum_{b\in\{1,2\}^n}\P(\xi_v=b)\P_{W_v,\xi_v=b}^{\otimes (T-\tau)} (A^{>\tau})\Bigr).
	$$
	\item[\it Step 2. Bounding the divergence.] Denote for brevity $\P(\xi=a)$ by $\P_\xi(a)$. Since the matrix $Q$ is the same for all graphons, the conditional probabilities generating the networks under $\Hyp_0$ and under $\Hyp_1$ are the same. Denote
	$$
	P_Q(A^{\le \tau}|a):=\P_{W_0,\xi_0=a}^{\otimes\tau}(A^{\le \tau})=\P_{W_u,\xi_u=a}^{\otimes\tau}(A^{\le \tau})
	$$ 
	and 
	$$
	P_Q(A^{>\tau}|b):=\P_{W_o,\xi_0'=b}^{\otimes (T-\tau)}(A^{>\tau} )=\P_{W_v,\xi_v=b}^{\otimes(T-\tau)}(A^{> \tau}).
	$$
	Then, we have
	\begin{align*}
	\mathrm{KL}(\P_0,\P_1) 
	&= \sum_{A} \Bigl(\sum_{a\in\{1,2\}^n} \P_{\xi_0}(a) P_Q(A^{\le \tau}|a)\Bigr) \Bigl(\sum_{b\in\{1,2\}^n} \P_{\xi_0'}(b) P_Q(A^{> \tau}|b)\Bigr)\\
	&\times\log \left(\frac{\Bigl(\sum\limits_{a\in\{1,2\}^n} \P_{\xi_0}(a) P_Q(A^{\le \tau}|a)\Bigr) \Bigl(\sum\limits_{b\in\{1,2\}^n} \P_{\xi_0'}(b) P_Q(A^{> \tau}|b)\Bigr)}
	{\Bigl(\sum\limits_{a\in\{1,2\}^n}\P_{\xi_u}(a)P_Q(A^{\le \tau}|a)\Bigr) \Bigl(\sum\limits_{b\in\{1,2\}^n}\P_{\xi_v}(b)P_Q(A^{>\tau}|b)}\right)\\
	&= \sum_{A^{\le \tau}} \sum_{a\in\{1,2\}^n} \P_{\xi_0}(a) P_Q(A^{\le \tau}|a)\log\left( \frac{\sum\limits_{a\in\{1,2\}^n} \P_{\xi_0}(a) P_Q(A^{\le \tau}|a)}
	{\sum\limits_{a\in\{1,2\}^n}\P_{\xi_u}(a)P_Q(A^{\le \tau}|a)}\right)\\
	&+ \sum_{A^{> \tau}} \sum_{b\in\{1,2\}^n} \P_{\xi_0'}(b) P_Q(A^{>\tau}|b) \log \left( \frac{\sum\limits_{b\in\{1,2\}^n} \P_{\xi_0'}(b) P_Q(A^{>\tau}|b)}
	{\sum\limits_{b\in\{1,2\}^n}\P_{\xi_v}(b)P_Q(A^{>\tau}|b)}\right).
	\end{align*}
	Thus, taking into account that the function $f(x,y)=x\log (x/y)$ is convex, we can apply the Jensen's inequality and obtain that
	\begin{align*}
	\mathrm{KL}(\P_0,\P_1) &\le \sum_{a\in\{1,2\}^n} \P_{\xi_0}(a)\log \frac{\P_{\xi_0}(a)}{\P_{\xi_u}(a)} \sum_{A^{\le \tau}}  P_Q(A^{\le \tau}|a)  \\
	&+\sum_{b\in\{1,2\}^n} \P_{\xi_0'}(b)\log \frac{\P_{\xi_0}(b)}{\P_{\xi_v}(b)} \sum_{A^{> \tau}}  P_Q(A^{> \tau}|b)\\
	& =n\Bigl(\mathrm{KL}(\P_{\xi_0},\P_{\xi_u})+  \mathrm{KL}(\P_{\xi_0'},\P_{\xi_v})\Bigr).
	\end{align*}
	The last equality follows from the fact that $\P_{\xi}(a)$ are product probabilities. Thus we have to bound the Kullback--Leibler divergence between two binomial distributions. Using the inequality $\log(1+x)\ge x/(1+x)$ $\forall x>-1$, we obtain 
	$$
	\mathrm{KL}(\P_{\xi_0},\P_{\xi_u})=\frac 12 \log \Bigl(\frac{1/4}{1/4-\lambda^2}\Bigr)\le \frac{2\lambda^2}{1-4\lambda^2}
	$$
	which implies $\mathrm{KL}(\P_0,\P_1) \le 4n\lambda^2/(1-4\lambda^2)$. Recall that $\delta^2(W_u,W_v)\ge 2\lambda$. Consequently, if $\delta=\delta(W_u,W_v)$, we can write
	$$
	\mathrm{KL}(\P_0,\P_1) \le \frac n2 \frac{\delta^4}{1-\delta^4}\le 2(1-\eta)^2
	$$
	if 
	$$
	\delta^4\le \frac{4n^{-1}(1-\eta)^2}{1+4n^{-1}(1-\eta)^2}.
	$$
	The last inequality is true if $\delta^4\le \frac83 n^{-1} (1-\eta)^2$ for all $n\ge 8$.  It implies the lower bound condition on the distance between graphons:
	$$
	\delta(W_u,W_v) \le \Bigl(\frac 83\Bigr)^{1/4} (1-\eta)^{1/2} n^{-1/4}.
	$$
	
\end{description}
{\it II. Network Sampling lower bound}.
In this part we will suppose that the transition matrix $Q$ changes but the partition $\Pi$ does not change. 
In order to bound the type II error by $\beta$ from below, we need to show that the chi-squared divergence between the 
mixtures under $\Hyp_0$ and $\Hyp_1$ is smaller that $4(1-\eta)^2$. 
\begin{description}
	\item[\it Step 1. Choice of priors.] It will be sufficient to show the result for the case of $K=2$, since as we will see the lower bound on the separation rate is independent of $K$. We will work with 2-step graphons with fixed partition $\Pi$ into 4 equal blocks $\Pi_{kl}=[k-1/2,k/2)\times[l-1/2,l/2)$, $1\le k,l\le 2$.  
	
	{\it Prior under $\Hyp_0$}. We suppose that under $\Hyp_0$ the connection probabilities are all equal to $1/2$, that is  $Q=\begin{pmatrix} 1/2 & 1/2\\ 1/2& 1/2\end{pmatrix}$ and $\Theta_{ij}=\rho_n/2$, $\forall i\neq j$. The corresponding graphon is denoted by $W_0$. Denote $p_0=\rho_n/2$. Then, independently of the feature vector $\boldsymbol\eps=(\eps_1,\dots,\eps_n)$, 
	$$
	\P_0(A)=\prod_{i<j}\prod_{i=1}^T p_0^{\sum_{t=1}^T A_{ij}^t}(1-p_0)^{T-\sum_{t=1}^T A_{ij}^t}
	$$
	
	{\it Prior under $\Hyp_1$}. Denote by $Q_1$ and $Q_2$ the connection probability matrices before and after the change-point. Let $\lambda>0$. We assume that 
	$$
	Q_1=\rho_n^{-1}\begin{pmatrix} p_1 & p_2\\ p_2 & p_1 \end{pmatrix}
	\quad\mbox{and}\quad 
	Q_2=\rho_n^{-1}\begin{pmatrix} p_3 & p_4\\ p_4 & p_3\end{pmatrix}
	$$
	where 
	\begin{align*}
	p_1=\rho_n\Bigl(\frac12 +\Bigl(1-\frac \tau T\Bigr)\lambda\Bigr),&\quad p_2=\rho_n\Bigl(\frac12 -\Bigl(1-\frac \tau T\Bigr)\lambda\Bigr),\\
	p_3=\rho_n\Bigl(\frac12 -\frac \tau T\lambda\Bigr),&\quad p_4=\rho_n\Bigl(\frac12 +\frac \tau T\lambda\Bigr).
	\end{align*}
	Denote the corresponding graphons by $W_1$ and $W_2$ and the corresponding matrices of connection probabilities by $\Theta_i$, $i=1,2$. Let $\xi=(\xi_1,\dots,\xi_n)$ be the class assignment vector of i.i.d. variables taking values $\{1,2\}$ with probability 1/2. Then 
	$$
	\P_1(A)=\P_{W_1}^{\otimes\tau} (A^{\le \tau}) \P_{W_2}^{\otimes (T-\tau)} (A^{> \tau})
	$$
	where
	$$
	P_{W_1}(A^t)=\prod_{i<j} \Bigl( p_1^{A_{ij}^t} (1-p_1)^{1-A_{ij}^t} \one\{\xi_i=\xi_j\}+ p_2^{A_{ij}^t} (1-p_2)^{1-A_{ij}^1} \one\{\xi_i\neq \xi_j\}\Bigr)
	$$
	and 
	$$
	P_{W_2}(A^t)=\prod_{i<j} \Bigl( p_3^{A_{ij}^t} (1-p_3)^{1-A_{ij}^t} \one\{\xi_i=\xi_j\}+ p_4^{A_{ij}^t} (1-p_4)^{1-A_{ij}^1} \one\{\xi_i\neq \xi_j\}\Bigr)
	$$
	\item[\it Step 2. Bounding the $\chi^2$-divergence.] To prove the lower bound, we will need an upper bound on the chi-squared divergence
	$$
	\chi^2(\P_0,\P_1)=\E_{\P_0} \Bigl(\frac{d\P_1}{d\P_0}\Bigr)^2-1 \le 4(1-\alpha-\beta)^2=4(1-\eta)^2. 
	$$
	This bound will follow from the upper bound on the second moment of the likelihood ratio:
	$$
	\E_{\P_0} L^2(A) \le 1+ 4(1-\eta)^2
	$$
	where $L(A)=\frac{d\P_1}{d\P_0}(A)$. 
	 Define the set $S=\Bigl\{\{a,b\}\in [n]^2:\ a<b,\ \xi_a=\xi_b\Bigr\}$ and its compliment  $S^c$. Denote by $N=n(n-1)/2$ the cardinality of $S\cup S^c$ and by $\mu$ the distribution of $S$. 
	Then, $L(A)=\int L_S(A)d\mu(S)$, where
	\begin{align*}
	L_S(A)
	&= \left(\frac{1-p_1}{1-p_0}\right)^{\tau |S|} \left(\frac{1-p_3}{1-p_0}\right)^{(T-\tau) |S|} \left(\frac{1-p_2}{1-p_0}\right)^{\tau|S^c|} \left(\frac{1-p_4}{1-p_0}\right)^{(T-\tau)|S^c| }\\
	&\times \prod_{\{i,j\}\in S}\left(\frac{p_1(1-p_0)}{p_0(1-p_1)}\right)^{\sum\limits_{t=1}^\tau A_{ij}^t}
	\left(\frac{p_3(1-p_0)}{p_0(1-p_3)}\right)^{\sum\limits_{t=\tau+1}^TA_{ij}^t}\\
	&\times\prod_{\{i,j\}\in S^c}\left(\frac{p_2(1-p_0)}{p_0(1-p_2)}\right)^{\sum\limits_{t=1}^\tau A_{ij}^t}
	\left(\frac{p_4(1-p_0)}{p_0(1-p_4)}\right)^{\sum\limits_{t=\tau+1}^TA_{ij}^t}.
	\end{align*}
	We need to find the second moment of $L(A)$. Let $S_1$ and $S_2$ be two independent copies of $S$, then $\E_{\P_0}[L^2(A)]=\int \E_{\P_0} [L_{S_1}(A)L_{S_2}(A)]d\mu(S_1)d\mu(S_2)$. We have that
	\begin{align*}
	\E_{\P_0} [L_{S_1}(A)L_{S_2}(A)]&= \left(\frac{1-p_1}{1-p_0}\right)^{\tau (|S_1|+|S_2|)} \left(\frac{1-p_3}{1-p_0}\right)^{(T-\tau)( |S_1|+|S_2|)}\\
	&\times  \left(\frac{1-p_2}{1-p_0}\right)^{\tau(|S_1^c|+|S_2^c|} \left(\frac{1-p_4}{1-p_0}\right)^{(T-\tau)(|S_1^c|+|S_2^c|) }\\
	&\times \E_{\P_0}\left[\prod_{\{i,j\}\in S_1\cap S_2} \left(\frac{p_1^2(1-p_0)^2}{p_0^2(1-p_1)^2}\right)^{\sum\limits_{t=1}^\tau A_{ij}^t}
	\left(\frac{p_3^2(1-p_0)^2}{p_0^2(1-p_3)^2}\right)^{\sum\limits_{t=\tau+1}^TA_{ij}^t}\right.\\
	&\times \prod_{\{i,j\}\in S_1^c\cap S_2^c} \left(\frac{p_2^2(1-p_0)^2}{p_0^2(1-p_2)^2}\right)^{\sum\limits_{t=1}^\tau A_{ij}^t}
	\left(\frac{p_4^2(1-p_0)^2}{p_0^2(1-p_4)^2}\right)^{\sum\limits_{t=\tau+1}^TA_{ij}^t}\\
	&\hskip - 3 cm \left.\times \prod_{\{i,j\}\in S_1 \triangle S_2}\left(\frac{p_1p_2(1-p_0)^2}{p_0^2(1-p_1)(1-p_2)}\right)^{\sum\limits_{t=1}^\tau A_{ij}^t}
	\left(\frac{p_3p_4(1-p_0)^2}{p_0^2(1-p_3)(1-p_4)}\right)^{\sum\limits_{t=\tau+1}^TA_{ij}^t} 
	\right].
	\end{align*}
	Taking into account the relations $p_1-p_0=(1-\tau/T)\rho_n\lambda$, $p_2-p_0=-(1-\tau/T)\rho_n\lambda$, $p_3-p_0=-(\tau/T)\rho_n\lambda$, $p_4-p_0=(\tau/T)\rho_n\lambda$ and Lemma~\ref{sec:aux_lb}, we obtain
	\begin{align*}
	\E_{\P_0} [L_{S_1}(A)L_{S_2}(A)]&= \left(1+\Bigl(1-\frac \tau T\Bigr)^2 \frac{\lambda^2\rho_n^2}{p_0(1-p_0)}\right)^{\tau(|S_1\cap S_2|+|S_1^c\cap S_2^c|)} \\
	&\times \left(1+\Bigl(\frac \tau T\Bigr)^2 \frac{\lambda^2\rho_n^2}{p_0(1-p_0)}\right)^{(T-\tau)(|S_1\cap S_2|+|S_1^c\cap S_2^c|)}\\
	&\times \left(1-\Bigl(1-\frac \tau T\Bigr)^2 \frac{\lambda^2\rho_n^2}{p_0(1-p_0)}\right)^{\tau |S_1\triangle S_2|}\\
	&\times \left(1-\Bigl(\frac \tau T\Bigr)^2 \frac{\lambda^2\rho_n^2}{p_0(1-p_0)}\right)^{(T-\tau)|S_1\triangle S_2|}.
	\end{align*}
	Next, using the fact that $|S_1\triangle S_2|+|S_1\cap S_2|+|S_1^c\cap S_2|=N$, we obtain
	\begin{align*}
	\E_{\P_0} [L_{S_1}(A)L_{S_2}(A)]&\le 
	\\&\hskip - 3cm \exp \left[ (|S_1\cap S_2|+|S_1^c\cap S_2^c|) \left(\tau\Bigl(1-\frac \tau T\Bigr)^2 +(T-\tau)\Bigl(\frac \tau T\Bigr)^2\right)\frac{\lambda^2\rho_n^2}{p_0(1-p_0)}\right.\\
	&\left.- |S_1\triangle S_2| \left(\tau \Bigl(1-\frac \tau T\Bigr)^2 +(T-\tau)\Bigl(\frac \tau T\Bigr)^2\right) \frac{\lambda^2\rho_n^2}{p_0(1-p_0)}\right] \\
	&=\exp\left[q^2\Bigl(\frac\tau T\Bigr)\frac{T\lambda^2\rho_n^2}{p_0(1-p_0)}(2|S_1\cap S_2|+2|S_1^c\cap S_2^c|-N)\right].
	\end{align*}
	Note that $p_0=\rho_n/2<1/2$. Thus, in order to bound the second moment likelihood ratio, we need to control the exponential moment
	$$
	\E_{\P_0}[L^2(A)] \leq \E_{S_1,S_2}\left\{\exp\left[\Bigl(2|S_1\cap S_2|+2|S_1^c\cap S_2^c|-N\Bigr)4Tq^2\Bigl(\frac\tau T\Bigr)\lambda^2\rho_n \right]\right\}.
	$$
	We need to control the exponential moment of the random variable $U=|S_1\cap S_2|+|S_1^c\cap S_2^c|$, where $S_1$ and $S_2$ are independent random variables distributed according to $\mu$. Following the last lines of Lemma~4.9 in~\citep{klopp_graphon}, denote by $\xi^{(1)}=(\xi_1,\dots,\xi_n)$ and  $\xi^{(2)}=(\xi_1,\dots,\xi_n)$ the assignment vectors corresponding to the variables $S_1$ and $S_2$, respectively. For any $(i,j)\in\{1,2\}^2$ introduce the random variable that counts the number of nodes in the classes $i$ and $j$ according to the first and the second assignement:
	$$
	N_{ij}=\Bigl|\Bigl\{a\in[n]:\ \xi_a^{(1)}=i,\ \xi_a^{(2)}=j\Bigr\}\Bigr|,\quad (i,j)\in\{1,2\}^2.
	$$
	Then $2|S_1\cap S_2|+n=N_{11}^2+N_{12}^2+N_{21}^2+N_{22}^2$ and $2|S_1^c\cap S_2^c|=2N_{11} N_{22}+2N_{12}N_{21}$. Hence, $2U+n=(N_{11}+N_{22})^2+(N_{12}+N_{21})^2$. Note that 
	$N_{11}+N_{22}+N_{12}+N_{21}=n$. Let $Z:=N_{11}+N_{22}-n/2$. It is a centered binomial random variable with parameters ($n$, 1/2) and 
	$$
	2U-N=(n/2+Z)^2+(n/2-Z)^2-n-N=2Z^2-n/2.
	$$
	 Consequently, we need to control the exponential moment of $Z^2$:
	$$
	\E_{\P_0}[L^2(A)]\le \E \exp \left[8Tq^2\Bigl(\frac\tau T\Bigr)\lambda^2\rho_n Z^2\right].
	$$
	Using Hoeffding's inequality, we can show that $\P(Z^2>t)\le 2e^{-2t/n}$, thus $Z^2$ is subexponential with the moments $\E [Z^{2k}]\le n^k k!$. Consequently, for any $\gamma_n$ such that $ 0<n\gamma_n<1$ we have 
$$
	\E e^{\gamma_n Z^2}\le 1+\sum_{k=1}^{+\infty} \frac{\gamma_n^k \E [Z^{2k}]}{k!} 
	\le \sum_{k=0}^{+\infty} (n\gamma_n)^k =\frac1{1-n\gamma_n}.
$$
	Set $\gamma_n=8Tq^2\Bigl(\frac\tau T\Bigr)\lambda^2\rho_n $. We can see that  if $n\gamma_n \le 4(1-\eta)^2(1+4(1-\eta)^2)^{-1}<1$, then $\E_{\P_0}[L^2(A)]\le 1+4(1-\eta)^2$.  
	Since $\delta^2(W_1,W_2)=4\lambda^2$, we have $\gamma_n =2Tn\rho_n q^2\Bigl(\frac\tau T\Bigr)\delta^2$ and the above condition on $n\gamma_n$ implies the lower bound
	$$
	q\Bigl(\frac\tau T\Bigr)\delta(W_1,W_2) \le \frac{\sqrt 2 (1-\eta)}{(1+4(1-\eta)^2)^{1/2}}\frac 1{\sqrt{n\rho_n T}}
	$$
	and the second part of the theorem follows. 

\end{description}
	\end{proof}

\section{Auxiliary results}

\subsection{Concentration inequalities for matrix processes}
The first result is the concentration inequality for the operator norm of a random matrix with independent entries (see \cite{bandeira2016}, Corollary 3.12 and Remark 3.13):
\begin{proposition}[Bandeira and Van Handel, 2016]\label{pr1}
	Let $W$ be an $m\times m$ symmetric matrix whose entries $W_{ij}$ are independent centered random variables bounded (in absolute value) by some $\sigma_*>0$. Then, for any $0<\epsilon\leq 1/2$ there exists a universal constant $c_{\epsilon}$ such that, for every $x\geq 0$
	$$ 
	\P\left\{ \|W\|_{2\rightarrow 2}\geq 2\sqrt 2(1+\epsilon)\sigma+x\right\}\le  m\exp\left (-\frac{x^{2}}{c_{\epsilon}\sigma^{2}_*}\right ),
	$$
	where $\sigma=\max_i \left[\sum_{j}\var(W_{ij})\right]^{1/2}$.
\end{proposition}
Next we apply this concentration inequality to the Matrix CUSUM statistics. 

Let  $X^{t}\in [-1,1]^{n\times n}$ ($1\le t\le T$) be a sequence of  matrices with independent entries $X_{ij}^t$ for any $1\le i,j\le n$ and for any $t=1,\dots T$. Assume that $X_{ij}^{t}$ are centered Bernoulli random variables taking values in $\{1-B_{ij}^t, -B_{ij}^t\}$ with success probability $B_{ij}^t$. Let $\pi$ denote a permutation of $\{1,\dots,n\}$. Consider the following centered matrix processes defined in~\eqref{def_xi} and \eqref{def_xi_pi}:
$$
\xi(t)=\sqrt{\frac{t(T-t)}T}\left(\dfrac{1}{t}\sum_{s=1}^{t}X^{s}-\dfrac{1}{T-t}\sum_{s=t+1}^{T}X^{s}\right),\quad 1\le t\le T-1,
$$
$$
\xi^{\pi}(t)= \sqrt{\frac{t(T-t)}T}\left (\dfrac{1}{t}\sum_{s=1}^{t}X^{s}-\dfrac{1}{T-t}\sum_{s=t+1}^{T}
X^{s}\circ \pi\right ), \quad 1\le t\le T-1.
$$
\begin{lemma}\label{lem:matrixBernstein}
	For any $\epsilon\in(0,1/2]$  there exists an absolute constant $C_\epsilon$  such that, for every $\delta\in(0,1)$   we have
	\begin{equation}\label{MatrixBernsteinStoch}
	\| \xi(t)\|_{2\rightarrow 2}\leq 2\sqrt{2}(1+\epsilon)\sqrt{\frac{t(T-t)}T} \left [\frac1{t^2}\sum_{s=1}^{t}\| B^{s}\|_{1,\infty}+ \frac1{(T-t)^2}\sum_{s=t+1}^{T}\|B^{s}\|_{1,\infty}\right ]^{1/2} + C_\epsilon \log\frac{2n}\delta
	\end{equation}
	with the probability larger than $1-\delta$.
\end{lemma}
\begin{proof} 
	The result follows from the direct application of Proposition~\ref{pr1}. 
	Since $X^s$ are independent, we can easily estimate $\sigma^2$ from above:
	\begin{align*}
	\sigma^2&=\max_i \sum_j \Var [\xi_{ij}(t)]\\
	&= \frac{t(T-t)}T\max_i \sum_j \left(\frac1{t^2} \sum_{s=1}^t B_{ij}^s(1-B_{ij}^s) +\frac1{(T-t)^2} \sum_{s=t+1}^T  B_{ij}^s(1-B_{ij}^s)  \right)\\
	&\le \frac{t(T-t)}T\max_i \left(\frac1{t^2} \sum_{s=1}^t \sum_jB_{ij}^s  +\frac1{(T-t)^2} \sum_{s=t+1}^T \sum_j B_{ij}^s \right)\\
	&= \frac{t(T-t)}T\left(\frac1{t^2} \sum_{s=1}^t \|B^s\|_{1,\infty} +\frac1{(T-t)^2} \sum_{s=t+1}^T  \|B^s\|_{1,\infty}\right).
	\end{align*}
	
	We will show now that  the norm $\|\xi(t)\|_\infty$ is bounded by some $\sigma^*$ with high probability. Consider the entries of the matrix $\xi(t)$ defined by 
	$$
	\xi_{ij}(t)=\sqrt{\frac{t(T-t)}T}\left(\dfrac{1}{t}\sum_{s=1}^{t}X_{ij}^{s}-\dfrac{1}{T-t}\sum_{s=t+1}^{T}X_{ij}^{s}\right)=\sum_{s=1}^T V_{ij}^s
	$$
	Since $-B_{ij}^s\le X_{ij}^s\le 1-B_{ij}^s$ we have $a_s\le V_{ij}^s\le b_s$ with 
	$$
	b_s-a_s\le \sqrt{\frac{t(T-t)}T} \left(\frac 1t\one_{\{ 1\le s\le t\}} + \frac 1{T-t}\one_{\{t<s\le T\}} \right).
	$$
	Since $X_{ij}^s$ are independent, applying the Hoeffding inequality, we obtain for any $x>0$ that for any $1\le i,j\le n$,
	$$
	\P\Bigl\{|\xi_{ij}(t)|>x \Bigr\} \le 2\exp\Biggl\{-\frac{2x^2}{\sum_s  (b_s-a_s)^2}\Biggr\}=2 e^{-2x^2}.
	$$
	Using the union bound, we get that 
	$$
	\P\Bigl\{\|\xi(t)\|_\infty>x \Bigr\} \le n^2 \P\Bigl\{|\xi_{ij}(t)|>x \Bigr\} =2n^2 e^{-2x^2}.
	$$
	Consequently, for any $\delta \in(0,1)$ we have  $\|\xi(t)\|_\infty \le \log^{1/2} (2n/\sqrt \delta)\le \log^{1/2}(2n/\delta)$ 
	with probability larger than $1-\delta/2$. Applying Proposition \ref{pr1} given the event $\Bigl\{\|\xi(t)\|_\infty \le\sigma_*\Bigr\}$ with $\sigma_*= \log^{1/2}(2n/\delta)$ we get 
	$$
	\P\Bigl\{\|\xi(t)\|_{2\to2}\ge 2\sqrt 2 (1+\epsilon) \sigma+x\Bigr\} \le n\exp\Bigl\{-\frac{x^2}{c_\epsilon \log(2n/\delta)} \Bigr\}+\frac \delta 2.
	$$
	Choosing $x = \sqrt{c_\epsilon} \log(2n/\delta)$ and $C_\epsilon=c_\epsilon^{1/2}$ we obtain \eqref{MatrixBernsteinStoch}.
\end{proof}

\begin{lemma}\label{lem_bernstein_sup}
	For any $\epsilon\in(0,1/2]$  there exists an absolute constant $C_\epsilon$  such that, for every $\delta\in(0,1)$   we have
	\begin{equation*}
		\sup_{\pi}\| \xi^{\pi}(t)\|_{2\rightarrow 2}\leq 2\sqrt{2}(1+\epsilon)\sqrt{\frac{t(T-t)}T}\left(\frac1t\left[\sum_{s=1}^{t}\| B^{s}\|_{1,\infty}\right]^{1/2}+ \frac1{(T-t)}\left[\sum_{s=t+1}^{T}\|B^{s}\|_{1,\infty}\right]^{1/2}\right)+ C_\epsilon \log\frac{4n}\delta
	\end{equation*}
	with probability larger than $1-\delta$.
\end{lemma}
\begin{proof}
Using the triangle inequality, for any fixed permutation $\pi$, we get 
\begin{align}\label{eq:proof_bernstein_sup}
	\| \xi^{\pi}(t)\|_{2\rightarrow 2}&\leq \sqrt{\frac{t(T-t)}T}\left(\left\|\dfrac{1}{t}\sum_{s=1}^{t}X^{s}\right\|_{2\rightarrow 2}+\left\|\dfrac{1}{T-t}\sum_{s=t+1}^{T}
	X^{s}\circ \pi\right\|_{2\rightarrow 2}\right)\nonumber\\
	&=\sqrt{\frac{t(T-t)}T}\left(\left\|\dfrac{1}{t}\sum_{s=1}^{t}X^{s}\right\|_{2\rightarrow 2}+\left\|\dfrac{1}{T-t}\sum_{s=t+1}^{T}
	X^{s}\right\|_{2\rightarrow 2}\right)
\end{align}
where in the last line we use the invariance of the operator norm under a permutation. Applying Proposition \ref{pr1} to each one of the terms in \eqref{eq:proof_bernstein_sup} as it is done in the proof of Lemma \ref{lem:matrixBernstein} we get the result.
\end{proof}

\subsection{Result on the Hadamard product of two matrices}

\begin{lemma}\label{lem:Hadamard_LB2}
	Let $A=(A_{ij})\in [0,\infty )^{n\times n}$ and $B=(B_{ij})\in\bR^{n\times n}$. Assume that $\diag(B)=0$.  
	Then,
	\begin{equation}\label{Hadamard_productLB1}
	\|A\odot B \|_{2\to 2}\ge 
	\frac{\underset{(ij):\,i\not = j}{\min} A_{ij}}{\sqrt {r\vee 1}}
	\|B\|_{2\to 2},
	\end{equation}
where $r=\mathrm{rank} (A\odot B)$. Moreover, if $A$ and $B$ are symmetric, we have that	
	\begin{equation}\label{Hadamard_productLB2}
	\|A\odot B \|_{2\to 2}\ge 
	\frac{\min_{ij} A_{ij}}{2\sqrt {r^*\vee 1}}
	\|B\|_{2\to 2},
	\end{equation}
	where 
	\[r^*=\underset{M=(M_{ij}):\,M_{ij}=(A\odot B)_{ij}\;\text{for}\;i\not = j}{\min} \mathrm{rank} (M).\]
\end{lemma}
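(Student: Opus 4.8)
The plan is to exploit the duality $\|M\|_{2\to 2} = \sup_{\|u\|_2=\|v\|_2=1} u^\T M v$ together with the fact that a matrix of rank $r$ has a ``compatibility'' between Frobenius and operator norm, $\|M\|_F \le \sqrt{r}\,\|M\|_{2\to 2}$, while the elementwise smallest factor controls the Frobenius norm of the Hadamard product from below. Concretely, for the first bound I would start from any pair of unit vectors $(u,v)$ and write
\[
u^\T (A\odot B) v = \sum_{i\neq j} A_{ij} B_{ij} u_i v_j,
\]
using $\diag(B)=0$ so that the $i=j$ terms vanish and only entries $A_{ij}$ with $i\neq j$ appear. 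The idea is that the Hadamard product with $A$ cannot shrink $B$ too much in Frobenius norm: $\|A\odot B\|_F^2 = \sum_{i\neq j} A_{ij}^2 B_{ij}^2 \ge (\min_{i\neq j} A_{ij})^2 \|B\|_F^2$. Then apply $\|A\odot B\|_F \le \sqrt{r\vee 1}\,\|A\odot B\|_{2\to 2}$ with $r=\operatorname{rank}(A\odot B)$ on the left and $\|B\|_{2\to 2}\le \|B\|_F$ on the right to obtain
\[
\sqrt{r\vee 1}\,\|A\odot B\|_{2\to 2}\;\ge\; \|A\odot B\|_F \;\ge\; \Bigl(\min_{i\neq j} A_{ij}\Bigr)\|B\|_F\;\ge\;\Bigl(\min_{i\neq j} A_{ij}\Bigr)\|B\|_{2\to 2},
\]
which rearranges to \eqref{Hadamard_productLB1}. (If $A\odot B=0$ the inequality is trivial, so one may assume $r\ge 1$.)

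For the symmetric refinement \eqref{Hadamard_productLB2}, the point is that one is allowed to modify the diagonal of $A\odot B$ freely without changing the bound we want, because $\|B\|_{2\to 2}$ only sees $B$ whose diagonal is zero. So I would fix a minimizing matrix $M$ with $M_{ij}=(A\odot B)_{ij}$ for $i\neq j$ and $\operatorname{rank}(M)=r^*$, and run the same Frobenius/operator-norm comparison on $M$. The slight subtlety is that $M$ and $A\odot B$ differ on the diagonal, so $\|M\|_F$ need not dominate $(\min_{ij}A_{ij})\|B\|_F$ directly; here I would split $M$ into its off-diagonal part $M^{\mathrm{off}}=A\odot B$ (diagonal already zero) and its diagonal part $D=\operatorname{diag}(M)$, giving $\|A\odot B\|_{2\to 2}\ge \|M\|_{2\to 2}-\|D\|_{2\to 2}$ or, more carefully, using that for symmetric matrices $\|M^{\mathrm{off}}\|_{2\to 2}$ and $\|M\|_{2\to 2}$ are comparable up to the factor coming from removing a diagonal — which is exactly where the constant $2$ enters. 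Then $\sqrt{r^*\vee 1}\,\|M\|_{2\to 2}\ge \|M\|_F\ge \|M^{\mathrm{off}}\|_F=\|A\odot B\|_F\ge (\min_{ij}A_{ij})\|B\|_F\ge(\min_{ij}A_{ij})\|B\|_{2\to 2}$, and combining with the factor-$2$ comparison yields \eqref{Hadamard_productLB2}.

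The main obstacle I anticipate is making the diagonal-removal step in the symmetric case clean: one needs a precise statement of the form ``for a symmetric matrix $M$, $\|M - \operatorname{diag}(M)\|_{2\to 2}$ and $\|M\|_{2\to2}$ are within a factor of $2$,'' or equivalently bounding $\|\operatorname{diag}(M)\|_{2\to2}=\max_i|M_{ii}|$ in terms of $\|M\|_{2\to2}$ and then controlling how much replacing $D$ by $0$ can increase the rank-to-operator-norm ratio. An alternative, possibly cleaner route is to apply \eqref{Hadamard_productLB1} directly to $A\odot B$ (which already has zero diagonal by hypothesis on $B$) but with the rank bounded by $r^*$ via the observation that $A\odot B$ itself can be completed to a rank-$r^*$ matrix; this sidesteps the diagonal splitting, though one must still account for the factor $2$ if the completion changes the operator norm. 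Either way, the arithmetic is routine once the right norm-comparison inequalities are invoked.
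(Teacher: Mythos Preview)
Your argument for \eqref{Hadamard_productLB1} is correct and is exactly the paper's: both chain $\|B\|_{2\to2}\le\|B\|_F$, $\|A\odot B\|_F\ge(\min_{i\ne j}A_{ij})\|B\|_F$, and $\|A\odot B\|_F\le\sqrt{r}\,\|A\odot B\|_{2\to2}$.

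For \eqref{Hadamard_productLB2} your overall strategy again matches the paper's --- take a rank-$r^*$ completion $M$, run the Frobenius chain to get $\|B\|_{2\to2}\le\sqrt{r^*}\,(\min_{ij}A_{ij})^{-1}\|M\|_{2\to2}$, then argue $\|M\|_{2\to2}\le 2\|A\odot B\|_{2\to2}$ --- but the step you flag as ``the main obstacle'' is a genuine gap in your proposal. The general inequality you invoke, ``for a symmetric matrix $M$, $\|M-\operatorname{diag}(M)\|_{2\to2}$ and $\|M\|_{2\to2}$ are within a factor of $2$,'' is simply false: take $M=\mathrm{id}_n$, whose off-diagonal part is zero. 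Bounding $\max_i|M_{ii}|\le\|M\|_{2\to2}$ (which is true) and combining with the triangle inequality only yields $\|A\odot B\|_{2\to2}\ge\|M\|_{2\to2}-\|M\|_{2\to2}=0$, which is useless. Your ``alternative route'' does not escape this either, since $A\odot B$ has rank $r$, not $r^*$, so you cannot bound $\|A\odot B\|_F\le\sqrt{r^*}\,\|A\odot B\|_{2\to2}$ directly.

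The paper closes this gap by exploiting the \emph{rank-minimizing} property of $M$ rather than any generic diagonal-removal fact. When $r^*<r$, the paper argues via Weyl's inequality applied to $A\odot B=(A\odot B-M)+M$: since $M$ has strictly more zero eigenvalues than $A\odot B$, there is an index $k$ with $\lambda_k(M)=0$ but $\lambda_k(A\odot B)\ne0$, and Weyl then controls the extreme eigenvalues of the diagonal matrix $M-A\odot B$ by those of $A\odot B$, giving $\|M-A\odot B\|_{2\to2}\le\|A\odot B\|_{2\to2}$ and hence $\|M\|_{2\to2}\le 2\|A\odot B\|_{2\to2}$. (If $r^*=r$, the first part already gives the bound without the factor $2$.) So the missing ingredient in your proposal is precisely this Weyl-plus-rank argument; a purely norm-based diagonal comparison cannot do the job.
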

\begin{proof}
	If $\underset{(ij):\,i\not = j}{\min} A_{ij}=0$, then the statement of the Lemma is trivially true. Now assume that $\underset{(ij):\,i\not = j}{\min} A_{ij}>0$. 
		We have that 
	\begin{align*}
	\|B\|^2_{2\to 2}&\leq \|B\|^2_{F}=\sum_{i\not = j} A^2_{ij} (A^{-1}_{ij})^2 B^2_{ij}\\
	&\leq  \underset{i\not = j}{\max} (A^{-1}_{ij})^2 \|A\odot B\|^2_{F}\leq r \,\underset{i\not = j}{\max} (A^{-1}_{ij})^2 \|A\odot B\|^2_{2\to 2}
	\end{align*}
which implies \eqref{Hadamard_productLB1}. On the other hand,	
	let $M$ be a solution to 
	\[M \in \underset{M=(M_{ij}):\,M_{ij}=(A\odot B)_{ij}\;\text{for}\;i\not = j}{\argmin} \mathrm{rank} (M).\]
	Let $\mathrm{rank} (M)=r^* < r$. We have that 
	\begin{align*}
		\|B\|^2_{2\to 2}&\leq \|B\|^2_{F}=\sum_{i\not = j} A^2_{ij} (A^{-1}_{ij})^2 B^2_{ij}\leq \underset{i\not = j}{\max} (A^{-1}_{ij})^2\Biggl \{ \sum_{i\not = j} \left (A_{ij} B\right )^2_{ij}+\sum_{i}M^2_{ii}\Biggr \}\\
		&= \underset{i\not = j}{\max} (A^{-1}_{ij})^2 \|M\|^2_{F}\leq r^* \underset{i\not = j}{\max} (A^{-1}_{ij})^2 \|M\|^2_{2\to 2}
	\end{align*}
which implies
	\begin{align}\label{LemmaHadamar2_1}
	\|B\|_{2\to 2}&\leq \sqrt{ r^*} \underset{i\not = j}{\max} (A^{-1}_{ij}) \|M\|_{2\to 2}\leq 2 \sqrt{ r^*} \underset{i\not = j}{\max} (A^{-1}_{ij}) \|A\odot B\|_{2\to 2}
	\end{align}
	where in the last inequality we use that $\|M\|_{2\to 2}\leq 2\|A\odot B\|_{2\to 2}$. To prove it, using the triangle inequality, it is enough to prove that $\|M-A\odot B\|_{2\to 2}\leq \|A\odot B\|_{2\to 2}$.
	Let denote by   $\lambda_1(X)\leq  \lambda_2(X)\leq \dots \leq \lambda_n(X)$ the eigenvalues of a symmetric matrix $X$. Then, using Weyl's inequality, we have that
	\begin{equation}\label{Weyl}
		\lambda_{j+k-n}(A\odot B)\leq \lambda_{j}(A\odot B-M)+\lambda_{k}(M)\leq \lambda_{j+k-1}(A\odot B).
	\end{equation}
Note that $r^* < r$ implies that there exist a $k$ such that $\lambda_{k}(M)=0$ but $\lambda_{k}(A\odot B)\not =0$.	Assume first that $\|M-A\odot B\|_{2\to 2}=-\lambda_1 (M-A\odot B)$. Then, taking in \eqref{Weyl} $j=1$ we get $-\lambda_1 (M-A\odot B)\leq \lambda_k(A\odot B)\leq \|A\odot B\|_{2\to 2}$. Now, if $\|M-A\odot B\|_{2\to 2}=\lambda_n (M-A\odot B)$,  taking $j=n$ we also get $\lambda_n (M-A\odot B)\leq  -\lambda_k(A\odot B)\leq \|A\odot B\|_{2\to 2}$.

To conclude the proof, note that \eqref{LemmaHadamar2_1} implies \eqref{Hadamard_productLB2}.
 \end{proof}

 \end{document}